\newlist{steps}{enumerate}{1}
\setlist[steps, 1]{label = \underline{Step \arabic*}:}
\newcommand\X[1]{{\ensuremath{\mathbf{X}}}_{#1}}
\newcommand\Xhat[1]{{\ensuremath{\widehat{\mathbf{X}}}}_{#1}}
\newcommand\DX[2]{\ensuremath{\mathbf{D}_{#1}\mathbf{X}_{#2}}}
\newcommand\DXhat[2]{\ensuremath{\mathbf{D}_{#1}\widehat{\mathbf{X}}_{#2}}}
\newcommand\DXcheck[2]{\ensuremath{\mathbf{D}_{#1}\widecheck{\mathbf{X}}_{#2}}}
\newcommand\fD{\ensuremath{f^D}}
\newcommand\DZprojection[2]{\ensuremath{\overline{DZ}_{#1}^{#2}}}
\newcommand\Expectation[2][]{\ensuremath{\mathbb{E}_{#1}\left[#2\right]}}
\newcommand\Hess[1]{{\ensuremath{\text{Hess}_{x}}#1}}
\newcommand\CondExpnx[2]{\ensuremath{\mathbb{E}_{#1}^x\left[#2\right]}}
\newcommand\CondExpn[2]{\ensuremath{\mathbb{E}_{#1}\left[#2\right]}}
\newcommand\CondExp[2]{\ensuremath{\mathbb{E}\left[#1\vert #2\right]}}
\DeclareFontFamily{U}{mathx}{\hyphenchar\font45}
\DeclareFontShape{U}{mathx}{m}{n}{
	<5> <6> <7> <8> <9> <10>
	<10.95> <12> <14.4> <17.28> <20.74> <24.88>
	mathx10
}{}
\DeclareSymbolFont{mathx}{U}{mathx}{m}{n}
\DeclareMathAccent{\widecheck}{0}{mathx}{"71}
\DeclareMathAccent{\wideparen}{0}{mathx}{"75}
\DeclareMathOperator*{\argmin}{arg\,min}
\numberwithin{equation}{section}
\newtheoremstyle{break}{\topsep}{\topsep}{\itshape}{}{\bfseries}{}{\newline}{}
\theoremstyle{break}
\newtheorem{theorem}{Theorem}[section]
\newtheorem{lemma}{Lemma}[section]
\newtheorem{remark}{Remark}[section]
\newtheorem{assumption}{Assumption}[section]
\DeclareMathAlphabet{\mathcalligra}{T1}{calligra}{m}{n}
\providecommand{\keywords}[1]{\small\textbf{Keywords: } #1}
\title{The One Step Malliavin scheme: new discretization of BSDEs implemented with deep learning regressions}
\author[1]{Balint Negyesi\thanks{Corresponding author, \href{mailto:B.Negyesi@tudelft.nl}{B.Negyesi@tudelft.nl}}}
\author[2]{Kristoffer Andersson}
\author[3]{Cornelis W. Oosterlee}
\affil[1]{\small Delft Institute of Applied Mathematics (DIAM), Delft University of Technology}
\affil[2]{Research Group of Scientific Computing, Centrum Wiskunde \& Informatica}
\affil[3]{Mathematical Institute, Utrecht University}
\begin{document}
\maketitle
\sloppy

\begin{abstract}
A novel discretization is presented for forward-backward stochastic differential equations (FBSDE) with differentiable coefficients, simultaneously solving the BSDE and its Malliavin sensitivity problem.
The control process is estimated by the corresponding linear BSDE driving the trajectories of the Malliavin derivatives of the solution pair, which implies the need to provide accurate $\Gamma$ estimates. The approximation is based on a merged formulation given by the Feynman-Kac formulae and the Malliavin chain rule. The continuous time dynamics is discretized with a theta-scheme. In order to allow for an efficient numerical solution of the arising semi-discrete conditional expectations in possibly high-dimensions, it is fundamental that the chosen approach admits to differentiable estimates. Two fully-implementable schemes are considered: the BCOS method as a reference in the one-dimensional framework and neural network Monte Carlo regressions in case of high-dimensional problems, similarly to the recently emerging class of Deep BSDE methods \cite{han_solving_2018, hure_deep_2020}.
An error analysis is carried out to show $\mathds{L}^2$ convergence of order $1/2$, under standard Lipschitz assumptions and additive noise in the forward diffusion.
Numerical experiments are provided for a range of different semi- and quasi-linear equations up to $50$ dimensions, demonstrating that the proposed scheme yields a significant improvement in the control estimations.
\end{abstract}
\keywords{backward stochastic differential equations, Malliavin calculus, Deep BSDE, neural networks, BCOS, gamma estimates}

\tableofcontents

\section{Introduction}\label{sec:intro}

In this paper, we are concerned with the numerical solution of a system of forward-backward stochastic differential equations (FBSDE) where the randomness in the backward equation (BSDE) is driven by a forward stochastic differential equation (SDE). These systems are written in the general form
\begin{subequations}\label{eq:fbsde} 
	\begin{align}	
		X_t &= x_0 + \int_0^t \mu(s, X_s)\mathrm{d}s + \int_0^t \sigma(s, X_s)\mathrm{d}W_s,\label{eq:fbsde:sde}\\
		Y_t &= g(X_T) + \int_t^T f(s, X_s, Y_s, Z_s)\mathrm{d}s - \int_t^T Z_s\mathrm{d}W_s\label{eq:fbsde:bsde},
	\end{align}
\end{subequations}
where $\{W_t\}_{0\leq t\leq T}$ is a $d$-dimensional Brownian motion and $\mu:[0, T]\times \mathds{R}^{d\times 1}\to\mathds{R}^{d\times 1}$, $\sigma:[0, T]\times \mathds{R}^{d\times 1}\to \mathds{R}^{d\times d}$, $g:\mathds{R}^{d\times 1}\to\mathds{R}^q$ and $f:[0, T]\times \mathds{R}^{d\times 1}\times \mathds{R}^q\times \mathds{R}^{q\times d}\to\mathds{R}^q$ are all deterministic mappings of time and space, with some fixed $T>0$. Adhering to the stochastic control terminology, we often refer to $Z$ as the \emph{control process}. We shall work under the standard well-posedness assumptions of Pardoux and Peng \cite{pardoux_backward_1992}, which require Lipschitz continuity of the corresponding coefficients in order to ensure the existence of a unique solution pair $\{(Y_t, Z_t)\}_{0\leq t\leq T}$ adapted to the augmented natural filtration. The main motivation to study FBSDE systems lies in their connection with parabolic, second-order partial differential equations (PDE), generalizing the well-known Feynman-Kac relations to non-linear settings. Indeed, considering the quasi-linear, parabolic terminal problem
\begin{align}\label{eq:parabolic_pde}
	\begin{split}
		\partial_t u(t, x) + \frac{1}{2}\Tr{\sigma\sigma^T(t, x) \Hess{u}(t, x)} + \braket{\mu(t, x)}{\nabla_x u(t, x)} + f(t, x, u, \nabla_x u(t, x)\sigma(t, x)) &= 0\\
		u(T, x) &= g(x),
	\end{split}
\end{align}
the Markov solution to \autoref{eq:fbsde} coincides with the solution of \autoref{eq:parabolic_pde} in an almost sure sense, provided by the \emph{non-linear Feynman-Kac} relations
\begin{align}\label{thm:feynman-kac}
	Y_t = u(t, X_t),\qquad Z_t = \nabla_x u(t, X_t) \sigma(t, X_t).
\end{align}
Consequently, the BSDE formulation provides a stochastic representation to the simultaneous solution of a parabolic problem and its gradient, which is an advantageous feature for several applications in stochastic control and finance, where sensitivities play a fundamental role.
These relations can be extended to \emph{viscosity solutions} in case \autoref{eq:parabolic_pde} does not admit to a classical solution -- see \cite{pardoux_backward_1992}. Moreover, it is known -- see \cite{pardoux_backward_1992, el_karoui_backward_1997, hu_malliavin_2011, mastrolia_malliavin_2017} -- that under suitable regularity assumptions the solution pair of the backward equation is differentiable in the Malliavin sense \cite{nualart_malliavin_2006}, and the Malliavin derivatives $\{(D_sY_t, D_sZ_t)\}_{0\leq s, t\leq T}$ satisfy a linear BSDE themselves, where the $Z$ process admits to a continuous modification provided by $Z_t=D_tY_t$.

From a numerical standpoint, the main challenge in solving BSDEs stems from the approximation of conditional expectations. Indeed, a discretization of the backward equation in \autoref{eq:fbsde:bsde} yields a sequence of recursively nested conditional expectations at each point in the discretized time window. Over the years, several methods have been proposed to tackle the solution of the FBSDE system using: PDE methods in \cite{ma_solving_1994}; forward Picard iterations in \cite{bender_forward_2007}; quantization techniques in \cite{bally_quantization_2003}; chaos expansion formulas in \cite{briand_simulation_2014}; Fourier cosine expansions in \cite{ruijter_fourier_2015, ruijter_numerical_2016} and regression Monte Carlo approaches in \cite{gobet_regression-based_2005, bouchard_discrete-time_2004, bender_least-squares_2012}. These methods have shown great results in low-dimensional settings, however, the majority of them suffers from the curse of dimensionality, meaning that their computational complexity scales exponentially in the number of dimensions. Although, regression Monte Carlo methods have been successfully proven to overcome this burden, they are difficult to apply beyond $d=10$ dimensions due to the necessity of a finite regression basis. The primary challenge in the numerical solution of BSDEs is related to the approximation of the $Z$ process. In particular, the standard backward Euler discretization results in a conditional expectation estimate of $Z$ which scales inverse proportionally with the step size of the time discretization -- see \cite{bouchard_discrete-time_2004}. This phenomenon poses a significant amount of difficulty in least-squares Monte Carlo frameworks, as the corresponding regression targets have diverging conditional variances in the continuous limit.

Recently, the field has received renewed attention due to the pioneering paper of Han et al. \cite{han_solving_2018}, in which they reformulate the backward discretization in a forward fashion, parametrize the control process of the solution by deep neural networks and train the resulting sequence of networks in a global optimization given by the terminal condition of \autoref{eq:fbsde:bsde}. Their method has enjoyed various modifications and extensions, see, e.g., \cite{fujii_asymptotic_2019, beck_machine_2019}. In particular, Huré et al. in \cite{hure_deep_2020} proposed an alternative where the optimization of the sequence of neural networks is done in a backward recursive manner, similarly to classical regression Monte Carlo approaches.
We refer to the class of these deep learning based formulations as \emph{Deep BSDE} methods.
Although such Deep BSDE solvers have shown remarkable empirical results in solving high-dimensional problems, they struggle to solve the whole FBSDE system in \autoref{eq:fbsde:bsde} and are merely focused on the PDE problem. In particular, the approach of \cite{han_solving_2018} solely captures the solution pair at $t=0$; whereas the extension of \cite{hure_deep_2020} gives good approximations at future time steps, its accuracy in the $Z$ part of the solution is significantly worse. 
The total approximation errors of such Deep BSDE methods have been investigated in \cite{han_convergence_2020, hure_deep_2020, germain_approximation_2021}. The results in \cite{han_convergence_2020} provide a \emph{posteriori estimate} driven by the error in the terminal condition, whereas the analyses in \cite{hure_deep_2020, germain_approximation_2021} show that due to the universal approximation theorem (UAT) of deep neural networks, the total approximation error of neural network parametrizations is consistent with the discretization in terms of regression biases.

The main motivation behind the present paper roots in the observations above. In order to provide more accurate solutions for the $Z$ process, we exploit the aforementioned relation between the Malliavin derivative of $Y$ and the control process by solving the linear BSDE driving the trajectories of $DY$. Hence, we are faced with the solution of one scalar-valued BSDE and one $d$-dimensional BSDE at each point in time. This raises the need for a new discrete scheme, which we call the \emph{One Step Malliavin (OSM)} scheme. The discretization of the linear BSDE of the Malliavin derivatives is based on a merged formulation of the Feynman-Kac formulae in \autoref{thm:feynman-kac} and the chain rule formula of Malliavin calculus \cite{nualart_malliavin_2006}. As we shall see, the resulting discrete time approximation of the $Z$ process possesses the same order of conditional variance as the ones of the $Y$ process, making the scheme significantly more attractive in a regression Monte Carlo framework compared to classical Euler discretizations. On the other hand, our formulation carries an extra layer of difficulty, in that we are forced to approximate the \textit{"the $Z$ of the $Z$, i.e. $\Gamma$ processes"} \cite[Pg.1184]{gobet_adaptive_2017} in the Malliavin BSDE which are, in light of \autoref{thm:feynman-kac}, related to the Hessian matrix of the solution of the corresponding parabolic problem \autoref{eq:parabolic_pde}. In this regard, our setting shares similarities with \emph{second-order backward SDEs (2BSDEs)} \cite{cheridito_second-order_2007} and fully non-linear problems \cite{fahim_probabilistic_2011}.
We analyze the discrete time approximation errors and show that under certain assumptions the new scheme has the same $\mathds{L}^2$ convergence rate of order $1/2$ as the backward Euler scheme of BSDEs \cite{bouchard_discrete-time_2004}.

Two fully-implementable approaches are investigated to solve the resulting discretization. First, we provide an extension to the BCOS method \cite{ruijter_fourier_2015} and approximate solutions to one-dimensional problems by Fourier cosine expansions. Ultimately, the presence of $\Gamma$ estimates induces $d^2$ many additional conditional expectations to be approximated at each point in time, which makes the OSM scheme less tractable for classical Monte Carlo parametrizations when $d$ is large. Thereafter, inspired by the encouraging results of Deep BSDE methods in case of high-dimensional equations, we propose a neural network least-squares Monte Carlo approach similar to the one of \cite{hure_deep_2020}, where the $Y$, $Z$ and $\Gamma$ processes are parametrized by fully-connected, feedforward deep neural networks. Subsequently, parameters of these networks are optimized in a recursive fashion, backwards over time, where at each time step two distinct gradient descent optimizations are performed, minimizing losses corresponding to the aforementioned discretization.  Motivated by the UAT property of neural networks in Sobolev spaces, similarly to \cite{hure_deep_2020}, we consider two variants of the latter approach: one in which the $\Gamma$ process is parametrized by a matrix-valued deep neural network; and one in which the $\Gamma$ process is approximated as the Jacobian of the parametrization of the $Z$ process, inspired by \autoref{thm:feynman-kac}. The total approximation error is investigated similarly to \cite{germain_approximation_2021, hure_deep_2020} and shown to be consistent with the discretization under the assumption of perfectly converging gradient descent iterations. We demonstrate the accuracy and robustness of our problem formulation with numerical experiments. In particular, using BCOS as a benchmark method for one-dimensional problems, we empirically assess the regression errors induced by gradient descent. We provide examples up to $d=50$ dimensions.

The rest of the paper is organized as follows. In \autoref{sec:bsdes_and_malliavin} we provide the necessary theoretical foundations, followed by \autoref{sec:discrete_scheme} where the new discrete scheme is formulated. In \autoref{sec:error_analysis} a discrete time approximation error analysis is given, bounding the total discretization error of the proposed scheme. \Cref{sec:regression} is concerned with the implementation of the discretization scheme, giving two fully-implementable approaches for the arising conditional expectations. First, the BCOS method \cite{ruijter_fourier_2015} is extended in case of one-dimensional problems, then a Deep BSDE \cite{han_solving_2018, hure_deep_2020} approach is formulated for high-dimensional equations. A complete regression error analysis is provided, building on the universal approximation properties of neural networks.
Our analysis is concluded by numerical experiments presented in \autoref{sec:numerical_experiments}, which confirm the theoretical results and showcase great accuracy over a wide range of different problems.

\section{Backward stochastic differential equations and Malliavin calculus}\label{sec:bsdes_and_malliavin}
In the following section we introduce the notions of BSDEs and Malliavin calculus used throughout the paper.
\subsection{Preliminaries}
Let us fix $0\leq T<\infty$ and $d, q, n, k\in \mathds{N}^+$. We are concerned with a filtered probability space $\left(\Omega, \mathcal{F}, \mathds{P}, \{\mathcal{F}\}_{0\leq t\leq T}\right)$, where $\mathcal{F}=\mathcal{F}_T$ and $\{\mathcal{F}\}_{0\leq t\leq T}$ is the natural filtration generated by a $d$-dimensional Brownian motion $\{W_t\}_{0\leq t\leq T}$ augmented by $\mathds{P}$-null sets of $\Omega$. In what follows, all equalities concerning $\mathcal{F}_t$-measurable random variables are meant in the $\mathds{P}$-a.s. sense and all expectations -- unless otherwise stated -- are meant under $\mathds{P}$. Throughout the whole paper we rely on the following notations
\begin{itemize}
	\item $\abs{x}\coloneqq \Tr{x^Tx}$ for the Frobenius norm of any $x\in \mathds{R}^{q\times d}$. In case of scalar and vector inputs this coincides with the standard Euclidean norm. Additionally, we put $\braket{x}{y}$ for the Euclidean inner product of $x, y\in\mathds{R}^d$.
	\item $\mathds{S}^p(\mathds{R}^{q\times d})$ for the space of continuous and progressively measurable stochastic processes $Y:\Omega\times [0, T]\to \mathds{R}^{q\times d}$ such that $\Expectation{\sup_{0\leq t\leq T}\abs{Y}^p}<\infty$.
	\item \sloppy $\mathds{H}^p(\mathds{R}^{q\times d})$ for the space of progressively measurable stochastic processes $Z: \Omega \times [0, T]\to \mathds{R}^{q\times d}$ such that $\Expectation{\left(\int_0^T \abs{Z_t}^2\mathrm{d}t\right)^{p/2}}<\infty$.
	\item $\mathds{L}^p_{\mathcal{F}_{t}}(\mathds{R}^{q\times d})$ for the space of $\mathcal{F}_t$-measurable random variables $\xi: \Omega\to \mathds{R}^{q\times d}$ such that $\Expectation{\abs{\xi}^p}<\infty$.
	\item $L^2([0, T]; \mathds{R}^q)$ for the Hilbert space of deterministic functions $h:[0, T]\to \mathds{R}^q$ such that $\int_0^T \abs{h(t)}^2\mathrm{d}t<\infty$. Additionally, we denote its inner product by $\braket{h}{g}_{L^{2}}\coloneqq \int_0^T h(t)g(t)\mathrm{d}t$.
	\item $\nabla_x f\coloneqq \left(\pdv{f}{x_1}, \dots, \pdv{f}{x_d}\right)$ for the gradient of a scalar-valued, multivariate function $(t, x, y, z)\mapsto f(t, x, y, z)$ with respect to $x\in\mathds{R}^d$, defined as a row vector, and analogously for $\nabla_y f, \nabla_z f$. Similarly, we denote the Jacobian matrix of a vector-valued function $\psi:\mathds{R}^d\to\mathds{R}^q$ by $\nabla_x \psi\in\mathds{R}^{q\times d}$. For notational convenience, we set the Jacobian matrix of row and column vector-valued functions in the same fashion.
	\item $C_b^k(\mathds{R}^d; \mathds{R}^q), C_p^k(\mathds{R}^d; \mathds{R}^q)$ for the set of $k$-times continuously differentiable functions $\varphi: \mathds{R}^d\to\mathds{R}^q$ such that all partial derivatives up to order $k$ are bounded or have polynomial growth, respectively.
	\item $\CondExpn{n}{\Phi}\coloneqq \CondExp{\Phi}{\mathcal{F}_{t_{n}}}$ for conditional expectations with respect to the natural filtration, given a time partition $0=t_0<t_1<\dots<t_N=T$. We occasionally use the notation $\CondExpnx{n}{\Phi}\coloneqq\CondExp{\Phi}{X_{t_{n}} = x}$ when the filtration is generated by a Markov process $X$.
	\item $\mathbf{1}_{q, d}, \mathbf{0}_{q, d}$ for $\mathds{R}^{q\times d}$ matrices full of ones and zeros, respectively.
\end{itemize}
By slight abuse of notation we put $\mathds{S}^p(\mathds{R})\coloneqq \mathds{S}^p(\mathds{R}^{1\times 1})$, $\mathds{H}^p(\mathds{R}^{d})\coloneqq \mathds{H}^p(\mathds{R}^{1\times d})$, $\mathbf{1}_{d}\coloneqq \mathbf{1}_{1, d}$ and $\mathbf{0}_d\coloneqq \mathbf{0}_{1\times d}$.

We recall the most important notions of Malliavin differentiability and refer to \cite{nualart_malliavin_2006} for a more detailed account on the subject. Consider the space of random processes $W(h)\coloneqq \int_0^T h(t)\mathrm{d}W_t$ with $h\in L^2([0, T]; \mathds{R}^n)$. Let us now define the subspace $\mathcal{R}\subseteq\mathds{L}^2_{\mathcal{F}_{T}}$ of smooth, scalar-valued random variables which are of the form $\Phi=\varphi(W(h_1), \dots, W(h_d))$ with some $\varphi\in C_p^\infty(\mathds{R}^d; \mathds{R})$. The Malliavin derivative of $\Phi$ is then defined as the $\mathds{R}^{1\times n}$-valued stochastic process $D_s\Phi\coloneqq \sum_{i=1}^d \partial_i \varphi(W(h_1), \dots, W(h_d))h_i(s)$. The derivative operator can be extended to the closure of $\mathcal{R}$ with respect to the norm
\begin{align*}
	\norm{\Phi}_{\mathds{D}^{1, p}}\coloneqq \left(\Expectation{\abs{\Phi}^p + \left(\int_0^T \abs{D_s\Phi}^2\mathrm{d}s\right)^{p/2}}\right)^{1/p},
\end{align*}
see \cite[Prop.1.2.1]{nualart_malliavin_2006}.
We denote this closure as the space of Malliavin differentiable, $\mathds{R}$-valued random variables by $D^{1, p}(\mathds{R})$.
For the space of vector-valued $\Phi=(\Phi_1, \dots, \Phi_q)$ Malliavin differentiable random variables, we put $\Phi\in\mathds{D}^{1, p}(\mathds{R}^q)$ when $\Phi_i\in \mathds{D}^{1, p}(\mathds{R})$ for each $i=1, \dots, q$. The Malliavin derivative $D_s\Phi\in \mathds{R}^{q\times n}$ is then the matrix-valued stochastic process whose $i$'th row is $D_s\Phi_i$.
The final result which extends the chain rule of elementary calculus to the Malliavin differentiation operator is fundamental for the present paper, essentially enabling the formulation of the upcoming discrete scheme.
\begin{lemma}[Malliavin chain rule lemma]\label{lemma:malliavin_chain_rule}
	Let $\psi\in C_b^1(\mathds{R}^d; \mathds{R}^q)$ and fix $p\geq 1$. Consider $F\in \mathds{D}^{1, p}(\mathds{R}^d)$. Then $\psi(F)\in \mathds{D}^{1, p}(\mathds{R}^q)$, furthermore for each $0\leq s\leq T$
	\begin{align}
		D_s\psi(F) = \nabla_x \psi(F)D_sF.
	\end{align}
\end{lemma}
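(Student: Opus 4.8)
The plan is the classical two-stage approximation argument: establish the identity first for smooth cylindrical functionals, then remove the regularity hypotheses on $F$ and on $\psi$ by density, each time invoking the closedness of the operator $D$ on $\mathds{D}^{1,p}$ (which is precisely what makes the extension in the definition of $\mathds{D}^{1,p}$ well posed, cf. \cite[Prop.1.2.1]{nualart_malliavin_2006}).

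\emph{Step 1 (smooth data).} Suppose first that $\psi\in C_b^\infty(\mathds{R}^d;\mathds{R}^q)$ and that $F\in\mathcal{R}^d$, so that after enlarging the underlying family of linear functionals to a common one we may write $F=\varphi(W(h_1),\dots,W(h_m))$ with $\varphi\in C_p^\infty(\mathds{R}^m;\mathds{R}^d)$. Since $\psi$ and all its derivatives are bounded and $\varphi$ has polynomial growth, the composition $\psi\circ\varphi$ again lies in $C_p^\infty(\mathds{R}^m;\mathds{R}^q)$, hence $\psi(F)\in\mathcal{R}^q$, and the identity $D_s\psi(F)=\nabla_x\psi(F)D_sF$ follows at once from the elementary chain rule applied to the defining formula $D_s\big(\psi\circ\varphi\big)(W(h_\cdot))=\sum_i \partial_i\big(\psi\circ\varphi\big)(W(h_\cdot))\,h_i(s)$.

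\emph{Step 2 (general $F$, smooth $\psi$).} Keeping $\psi\in C_b^\infty$, pick by definition of $\mathds{D}^{1,p}(\mathds{R}^d)$ a sequence $F^{(k)}\in\mathcal{R}^d$ with $F^{(k)}\to F$ in $\|\cdot\|_{\mathds{D}^{1,p}}$; along a subsequence we may also assume $F^{(k)}\to F$ a.s. and $D F^{(k)}\to DF$ a.s. in $L^2([0,T])$. As $\psi$ is Lipschitz, $\psi(F^{(k)})\to\psi(F)$ in $\mathds{L}^p_{\mathcal{F}_T}$. For the derivatives, decompose
\[
\nabla_x\psi(F^{(k)})D_sF^{(k)}-\nabla_x\psi(F)D_sF=\big(\nabla_x\psi(F^{(k)})-\nabla_x\psi(F)\big)D_sF^{(k)}+\nabla_x\psi(F)\big(D_sF^{(k)}-D_sF\big).
\]
The second summand vanishes in $\mathds{H}^p$ because $\nabla_x\psi$ is bounded and $D F^{(k)}\to DF$ in $\mathds{H}^p$; the first vanishes in $\mathds{H}^p$ by a Vitali-type argument, since $\nabla_x\psi(F^{(k)})-\nabla_x\psi(F)\to 0$ a.s. and stays uniformly bounded, while $\big(\int_0^T|D_sF^{(k)}|^2\,\mathrm{d}s\big)^{p/2}$ is uniformly integrable (being $\mathds{L}^1$-convergent). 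Combining this with Step 1 and the closedness of $D$ yields $\psi(F)\in\mathds{D}^{1,p}(\mathds{R}^q)$ with $D_s\psi(F)=\nabla_x\psi(F)D_sF$.

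\emph{Step 3 (general $\psi$) and main obstacle.} Finally, for $\psi\in C_b^1(\mathds{R}^d;\mathds{R}^q)$ set $\psi_\varepsilon\coloneqq\psi*\rho_\varepsilon$ with a standard mollifier $\rho_\varepsilon$; then $\psi_\varepsilon\in C_b^\infty$ with $\|\psi_\varepsilon\|_\infty\le\|\psi\|_\infty$, $\|\nabla\psi_\varepsilon\|_\infty\le\|\nabla\psi\|_\infty$, and $\psi_\varepsilon\to\psi$ uniformly while $\nabla\psi_\varepsilon\to\nabla\psi$ locally uniformly. Applying Step 2 to $\psi_\varepsilon$ and letting $\varepsilon\to 0$ — so that $\psi_\varepsilon(F)\to\psi(F)$ in $\mathds{L}^p_{\mathcal{F}_T}$ and $\nabla_x\psi_\varepsilon(F)D_sF\to\nabla_x\psi(F)D_sF$ in $\mathds{H}^p$ by dominated convergence — one last use of closedness of $D$ finishes the proof. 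The genuinely delicate points are exactly these two approximation passages: first, that $\psi$ is merely $C_b^1$, so $\psi\circ\varphi$ need not be in $C_p^\infty$ and $\psi(F)$ is not \emph{a priori} a smooth cylindrical functional, forcing the mollification detour; and second, justifying the $\mathds{H}^p$-limit $\nabla_x\psi(F^{(k)})DF^{(k)}\to\nabla_x\psi(F)DF$ when only $\mathds{D}^{1,p}$-convergence (and hence uniform integrability, not a uniform bound) of $DF^{(k)}$ is available. Everything else is bookkeeping around the closed graph property of the derivative operator.
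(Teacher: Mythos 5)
Your proof is correct: the paper itself does not prove this lemma but quotes it from \cite[Prop.~1.2.3--1.2.4]{nualart_malliavin_2006}, and your three-step argument (chain rule on smooth cylindrical functionals, then removal of the regularity of $F$ and of $\psi$ by density, mollification and the closedness of $D$ on $\mathds{D}^{1,p}$) is exactly the standard proof given there. The two delicate passages you flag — the Vitali/uniform-integrability argument for $\nabla_x\psi(F^{(k)})DF^{(k)}\to\nabla_x\psi(F)DF$ in $\mathds{H}^p$ along an a.s.\ convergent subsequence, and the dominated-convergence step after mollifying $\psi$ — are handled correctly, so no gap remains.
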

The lemma can be relaxed to the case where $\psi$ is only Lipschitz continuous -- see \cite[Prop.1.2.4]{nualart_malliavin_2006}.

\subsection{Backward stochastic differential equations}

We first provide the necessary theoretical foundations for the well-posedness of the underlying FBSDE system in \autoref{eq:fbsde} guaranteeing the existence of a unique solution triple. Given the stronger assumptions later required for their Malliavin differentiability, we restrict the presentation to standard Lipschitz assumptions. For a more general exposure we refer to \cite{chassagneux_numerical_2016} and the references therein.

It is well-known -- see, e.g., \cite{karatzas_brownian_1998} -- that the SDE in \autoref{eq:fbsde:sde} admits to a unique strong solution $\{X_t\}_{0\leq t\leq T}\in \mathds{S}^p(\mathds{R}^{d\times 1})$ whenever $x_0\in\mathds{L}^p_{\mathcal{F}_{0}}(\mathds{R}^{d\times 1})$ and $\mu, \sigma$ are Lipschitz continuous in the spatial variable, i.e.
\begin{align}
	\abs{\mu(t, x_1) - \mu(t, x_2)} + \abs{\sigma(t, x_1) - \sigma(t, x_2)}\leq L_{\mu, \sigma} \abs{x_1 - x_2}
\end{align}
for all $t\in[0, T]$, $x_1,x_2\in\mathds{R}^{d\times 1}$, with some $L_{\mu, \sigma}>0$. Additionally, the solution $\{X_t\}_{0\leq t\leq T}$ satisfies the following estimates for all $p\geq 1$
\begin{align}\label{eq:sec2:sup_holder:x}
	\Expectation{\sup_{0\leq t\leq T} \abs{X_t}^p}\leq C_p,\qquad \Expectation{\abs{X_t-X_s}^p}\leq C_p\abs{t-s}^{p/2},
\end{align}
with constant $C_p$ only depending on $p, T, d$. In case of the Arithmetic Brownian Motion (ABM) with constant $\mu$ and $\sigma$, \autoref{eq:fbsde:sde} admits to the unique solution $X_t=\mu t + \sigma W_t$. In particular, the Malliavin chain rule formula in \autoref{lemma:malliavin_chain_rule} implies that $D_sX_t=\mathds{1}_{s\leq t}\sigma$.

The well-posedness of the backward equation in \autoref{eq:fbsde:bsde} is guaranteed by the Lipschitz continuity of the driver, on top of the polynomial growth of the terminal condition
\begin{align}
	\abs{f(t, x, y_1, z_1) - f(t, x, y_2, z_2)}\leq L_{f, g}\left(\abs{y_1-y_2} + \abs{z_1-z_2}\right),\qquad \abs{f(t, x, y, z)} + \abs{g(x)}\leq L_{f, g}\left(1+\abs{x}^p\right),
\end{align}
for any $t\in[0, T]$, $y_1, y_2\in \mathds{R}^q$, $z_1, z_2\in \mathds{R}^{q\times d}$, with some $L_{f, g}>0$ and $p\geq 2$. These conditions, combined with the ones for the SDEs above, imply the existence of a unique solution pair $Y\in \mathds{S}^p(\mathds{R}^{q})$, $Z\in\mathds{H}^p(\mathds{R}^{q\times d})$ satisfying \autoref{eq:fbsde:bsde}. Let us now fix $q=1$ and restrict the further analysis to scalar-valued backward equations.
Thereafter, under the aforementioned conditions, the FBSDE system in \autoref{eq:fbsde} admits to a unique solution triple $\{(X_t, Y_t, Z_t)\}_{0\leq t\leq T}\in \mathds{S}^p(\mathds{R}^{d\times 1})\times \mathds{S}^p(\mathds{R})\times \mathds{H}^p(\mathds{R}^{1\times d})$.

\subsection{Malliavin differentiable FBSDE systems}
This paper is focused on a special class of FBSDE systems such that the solution triple $\{(X_t, Y_t, Z_t)\}_{0\leq t\leq T}$ is differentiable in the Malliavin sense. The Malliavin differentiability of the forward equation is guaranteed by the following theorem due to Nualart in \cite[Thm.2.2.1]{nualart_malliavin_2006}.
\begin{lemma}[Malliavin differentiability of SDEs, \cite{nualart_malliavin_2006}]\label{thm:malliavin:sde}
	Let $x_0\in\mathds{L}^p_{\mathcal{F}_{0}}(\mathds{R}^{d\times 1})$, $\mu\in C_b^{0, 1}([0, T]\times \mathds{R}^{d\times 1}; \mathds{R}^{d\times 1})$, $\sigma\in C_b^{0, 1}([0, T]\times \mathds{R}^{d\times 1}; \mathds{R}^{d\times d})$ and $\mu(t, 0), \sigma(t, 0)$ be uniformly bounded for all $0\leq t\leq T$. Put $\{X_t\}_{0\leq t\leq T}$ for the unique solution of \autoref{eq:fbsde:sde}. Then for all $t\in[0, T]$ $X_t\in\mathds{D}^{1, p}(\mathds{R}^{d\times 1})$ and there exists a continuous modification of its Malliavin derivative $\{D_sX_t\}_{0\leq s, t\leq T}\in \mathds{S}^p(\mathds{R}^{d\times d})$ which satisfies the linear SDE
	\begin{align}\label{eq:sec2:malliavin_sde}
		D_sX_t = \mathds{1}_{s\leq t}\left\{\sigma(s, X_s) + \int_s^t \nabla_x \mu(r, X_r)D_sX_r\mathrm{d}r + \int_s^t \nabla_x\sigma(r, X_r)D_sX_r\mathrm{d}W_r.\right\},
	\end{align}
	where $\nabla_x \sigma$ denotes a $\mathds{R}^{d\times d\times d}$-valued tensor with $\left[\nabla_x \sigma\right]_{ijk}=\partial_k \left[\sigma\right]_{ij}$.
	Furthermore, there exists a constant $C_p$, such that
	\begin{align}\label{mean-squared continuity:dx}
		\sup_{s\in[0, T]}\Expectation{\sup_{t\in[s, T]} \abs{D_sX_t}^p}\leq C_p, \qquad\Expectation{\abs{D_sX_r - D_sX_t}^p}\leq C_p\abs{r-t}^{p/2}.
	\end{align}
\end{lemma}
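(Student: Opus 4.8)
The plan is to run the classical Picard iteration for \autoref{eq:fbsde:sde} and push it through the Malliavin calculus machinery, relying on two facts: the closedness of the operator $D$ on $\mathds{L}^p$, and the commutation of $D_s$ with the Lebesgue integral and with the Itô integral (the latter producing the boundary term $\sigma(s,X_s)$). First I would set up the iterates $X^{(0)}_t\equiv x_0$ and $X^{(m+1)}_t = x_0+\int_0^t\mu(r,X^{(m)}_r)\rmd r+\int_0^t\sigma(r,X^{(m)}_r)\rmd W_r$, which converge to $X$ in $\mathds{S}^p(\mathds{R}^{d\times 1})$ by the standard fixed-point estimate — here the global Lipschitz property and the linear growth of $\mu,\sigma$ both follow from $\mu,\sigma\in C_b^{0,1}$ together with the uniform boundedness of $\mu(t,0),\sigma(t,0)$. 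Then by induction on $m$ one shows $X^{(m)}_t\in\mathds{D}^{1,p}(\mathds{R}^{d\times 1})$: the base case is immediate, and for the step one applies the chain rule \autoref{lemma:malliavin_chain_rule} to $\mu(r,\cdot),\sigma(r,\cdot)$ and commutes $D_s$ past both integrals to obtain
\[
D_s X^{(m+1)}_t=\mathds{1}_{s\leq t}\Bigl\{\sigma(s,X^{(m)}_s)+\int_s^t\nabla_x\mu(r,X^{(m)}_r)D_sX^{(m)}_r\rmd r+\int_s^t\nabla_x\sigma(r,X^{(m)}_r)D_sX^{(m)}_r\rmd W_r\Bigr\}.
\]

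Next I would derive a uniform-in-$m$ bound $\sup_m\sup_{s\in[0,T]}\Expectation{\sup_{t\in[s,T]}\abs{D_sX^{(m)}_t}^p}\leq C_p$. Applying the Burkholder–Davis–Gundy inequality to the stochastic integral, Jensen/Hölder to the drift term, using that $\nabla_x\mu,\nabla_x\sigma$ are bounded by hypothesis and that $\sigma(s,X^{(m)}_s)$ is uniformly $\mathds{L}^p$-bounded (linear growth plus \autoref{eq:fbsde:sde} moment bounds), Gronwall's lemma closes the estimate. Combined with $X^{(m)}\to X$ in $\mathds{S}^p$, this gives $\sup_m\norm{X^{(m)}_t}_{\mathds{D}^{1,p}}<\infty$, so by the closedness criterion for $D$ (e.g. \cite[Lemma 1.2.3]{nualart_malliavin_2006}) the limit satisfies $X_t\in\mathds{D}^{1,p}(\mathds{R}^{d\times 1})$ and $D_sX^{(m)}_t\to D_sX_t$ weakly in $\mathds{L}^p(\Omega\times[0,T])$. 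Passing to the limit in the recursion — using continuity of $\nabla_x\mu,\nabla_x\sigma$, dominated convergence for the drift, and the Itô isometry for the martingale part — shows that $\{D_sX_t\}_{0\leq s,t\leq T}$ solves the linear SDE \autoref{eq:sec2:malliavin_sde}.

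For the remaining claims I would argue as follows. For each fixed $s$, \autoref{eq:sec2:malliavin_sde} is a linear SDE with adapted, bounded coefficients $\nabla_x\mu(r,X_r),\nabla_x\sigma(r,X_r)$ and initial datum $\sigma(s,X_s)\in\mathds{L}^p$, so classical SDE theory provides a unique strong solution with a continuous modification in $t$ on $[s,T]$; hence $\{D_sX_t\}\in\mathds{S}^p(\mathds{R}^{d\times d})$ and the first estimate in \autoref{mean-squared continuity:dx} is precisely the uniform bound already obtained (now for the limit process). The mean-squared continuity estimate in \autoref{mean-squared continuity:dx} follows by writing, for $s\leq t\leq r$, $D_sX_r-D_sX_t=\int_t^r\nabla_x\mu(u,X_u)D_sX_u\rmd u+\int_t^r\nabla_x\sigma(u,X_u)D_sX_u\rmd W_u$, taking $p$-th moments, applying BDG to the martingale term and Jensen to the drift, and using boundedness of the Jacobians together with the uniform moment bound on $D_sX_u$; this yields the factor $\abs{r-t}^{p/2}$.

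The main obstacle is the limiting step: one must carry out the Gronwall argument for the matrix-valued recursion with a third-order tensor coefficient $\nabla_x\sigma$, and then correctly identify the weak limit of $D_sX^{(m)}$ with the strong solution of \autoref{eq:sec2:malliavin_sde} rather than merely with some $\mathds{L}^p$ process — this is where the closedness of $D$ and a uniform integrability/UI-type argument for the stochastic integrals do the real work. Once $D_sX_t\in\mathds{S}^p$ and satisfies \autoref{eq:sec2:malliavin_sde}, the moment and Hölder bounds are routine BDG-plus-Gronwall computations. As a shortcut, one may instead invoke \cite[Thm.2.2.1]{nualart_malliavin_2006} for the differentiability and the form of \autoref{eq:sec2:malliavin_sde}, and only supply the $\mathds{S}^p$-estimates \autoref{mean-squared continuity:dx} through the BDG/Gronwall route sketched above.
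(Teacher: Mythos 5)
Your sketch is correct and follows essentially the same route as the paper, which does not prove this lemma itself but invokes it directly from \cite[Thm.2.2.1]{nualart_malliavin_2006}: the Picard iteration combined with the chain rule, uniform $\mathds{D}^{1,p}$ bounds via BDG and Gr\"onwall, and the closedness of the derivative operator is exactly the standard argument behind that cited theorem, and the moment and H\"older estimates in \autoref{mean-squared continuity:dx} are the routine BDG--Gr\"onwall consequences of the linear SDE \autoref{eq:sec2:malliavin_sde} with bounded coefficients, as you indicate.
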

The main implication of the proposition above is that under relatively mild assumptions on the bounded continuous differentiability of the coefficients in \autoref{eq:fbsde:sde}, the Malliavin derivative of the solution satisfies a linear SDE, where the random coefficients depend on the solution of the SDE itself. Intriguingly, a similar assertion can be made about the solution pair of the backward equation in \autoref{eq:fbsde:bsde}, which -- on top of establishing their Malliavin differentiability -- also creates a connection between the Malliavin derivative $DY$ and the control process. This is stated by the following theorem originally from Pardoux and Peng in \cite{pardoux_backward_1992}, which we state under the loosened conditions of El Karoui et al. \cite[Prop.5.9]{el_karoui_backward_1997}.
\begin{theorem}[Malliavin differentiability of BSDEs, \cite{el_karoui_backward_1997}]\label{thm:malliavin:bsde}
	Let the coefficients of \autoref{eq:fbsde:sde} satisfy the conditions of \autoref{thm:malliavin:sde}
	and assume $f\in C_b^{0, 1, 1, 1}([0, T]\times \mathds{R}^{d\times 1}, \mathds{R}, \mathds{R}^{1\times d}; \mathds{R})$, $g\in C_b^1(\mathds{R}^{d\times 1}; \mathds{R})$.
	Fix $p\geq 2$.
	Put $\{(Y_t, Z_t)\}_{0\leq t\leq T}$ for the unique solution pair of \autoref{eq:fbsde:bsde}. Then for all $t\in[0, T]$ $Y_t\in\mathds{D}^{1, 2}(\mathds{R})$, $Z_t\in\mathds{D}^{1, 2}(\mathds{R}^{1\times d})$ and there exist modifications of their Malliavin derivatives $\{D_sY_t\}_{0\leq s,t\leq T}\in \mathds{S}^p(\mathds{R}^{1\times d})$, $\{D_sZ_t\}_{0\leq, s,t\leq T}\in \mathds{H}^p(\mathds{R}^{d\times d})$ which satisfy the following linear BSDE
	\begin{align}\label{eq:sec2:malliavin_bsde}
		\begin{split}
			D_sY_t &= \begin{aligned}[t]
				&\nabla_x g(X_T)D_sX_T\\
				&+ \int_t^T \nabla_x f(r, X_r, Y_r, Z_r)D_sX_r + \nabla_y f(r, X_r, Y_r, Z_r)D_sY_r + \nabla_z f(r, X_r, Y_r, Z_r)D_sZ_r\mathrm{d}r\\
				&-\int_t^T D_sZ_r\mathrm{d}W_r,\quad 0\leq s\leq t\leq T,
			\end{aligned}\\
			D_sY_t &=\mathbf{0}_d,\quad D_sZ_t = \mathbf{0}_{d, d},\quad 0\leq t< s\leq T.
		\end{split}
	\end{align}
	Furthermore, there exists a continuous modification of the control process such that $Z_t=D_tY_t$ almost surely for all $0\leq t\leq T$.
\end{theorem}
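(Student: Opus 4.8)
The plan is to run a Picard iteration for the backward equation \autoref{eq:fbsde:bsde}, propagate Malliavin differentiability through each iterate using the chain rule of \autoref{lemma:malliavin_chain_rule} together with the standard commutation identities of the derivative operator with conditional expectation and with the Lebesgue and It\^o integrals, and then pass to the limit by invoking the closedness of $D$. Concretely, set $(Y^0, Z^0)\equiv(\mathbf{0}_d, \mathbf{0}_{d,d})$ and, given $(Y^n, Z^n)\in\mathds{S}^2(\mathds{R})\times\mathds{H}^2(\mathds{R}^{1\times d})$, let $Z^{n+1}$ be the integrand in the martingale representation of $\xi^n\coloneqq g(X_T)+\int_0^T f(r, X_r, Y^n_r, Z^n_r)\mathrm{d}r$ and set $Y^{n+1}_t\coloneqq \mathds{E}[\xi^n\mid\mathcal{F}_t]-\int_0^t f(r, X_r, Y^n_r, Z^n_r)\mathrm{d}r$. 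Classical BSDE theory gives $(Y^n, Z^n)\to(Y, Z)$ in $\mathds{S}^2\times\mathds{H}^2$ (and, under the $p$-integrability assumed here, in the corresponding $\mathds{S}^p\times\mathds{H}^p$ norms).

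The first step is to show by induction that $Y^n_t, Z^n_t\in\mathds{D}^{1,2}$ for every $n$ and $t$. For the base case, \autoref{thm:malliavin:sde} gives $X_T\in\mathds{D}^{1,p}$, hence $g(X_T)\in\mathds{D}^{1,p}$ with $D_sg(X_T)=\nabla_x g(X_T)D_sX_T$ by \autoref{lemma:malliavin_chain_rule}, and once the $n$-th iterate is differentiable the same lemma gives $f(r, X_r, Y^n_r, Z^n_r)\in\mathds{D}^{1,2}$ with derivative $\nabla_x f\,D_sX_r+\nabla_y f\,D_sY^n_r+\nabla_z f\,D_sZ^n_r$. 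Applying the commutation rules $D_s\,\mathds{E}[\Phi\mid\mathcal{F}_t]=\mathds{1}_{s\leq t}\,\mathds{E}[D_s\Phi\mid\mathcal{F}_t]$, $D_s\int_0^t\phi_r\mathrm{d}r=\int_0^t D_s\phi_r\mathrm{d}r$ and $D_s\int_0^t\phi_r\mathrm{d}W_r=\phi_s\mathds{1}_{s\leq t}+\int_s^t D_s\phi_r\mathrm{d}W_r$ to the definition of $(Y^{n+1}, Z^{n+1})$ shows that $(D_sY^{n+1}_t, D_sZ^{n+1}_t)$ solves a linear BSDE of exactly the form \autoref{eq:sec2:malliavin_bsde}, but with all coefficients evaluated at the $n$-th iterate; moreover, since the diagonal term in the It\^o-integral rule contributes $Z^{n+1}_s$, one reads off $Z^{n+1}_t=D_tY^{n+1}_t$ for a.e.\ $t$.

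Next I would obtain $n$-uniform bounds. Using the boundedness of $\nabla_x f,\nabla_y f,\nabla_z f,\nabla_x g$ and the moment estimates \autoref{mean-squared continuity:dx} on $D_sX$, a standard a priori estimate for linear BSDEs (a Gronwall argument in the $\mathds{S}^p\times\mathds{H}^p$ norm) gives $\sup_s\mathds{E}[\sup_{t\in[s,T]}\abs{D_sY^n_t}^p]+\sup_s\mathds{E}[(\int_s^T\abs{D_sZ^n_r}^2\mathrm{d}r)^{p/2}]\leq C_p$ uniformly in $n$; subtracting the linear BSDEs for consecutive indices and using the contraction constant shows $(DY^n, DZ^n)$ is Cauchy, with a limit $(U, V)$ solving \autoref{eq:sec2:malliavin_bsde}. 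Since $Y^n_t\to Y_t$ in $\mathds{L}^2$ while $DY^n_t\to U^{\cdot}_t$ in $\mathds{L}^2(\Omega\times[0,T])$, closedness of $D$ yields $Y_t\in\mathds{D}^{1,2}$ with $D_sY_t=U^s_t$, and analogously $Z_t\in\mathds{D}^{1,2}$ with $D_sZ_t=V^s_t$ (working in $\mathds{H}^2$); the $\mathds{S}^p$, $\mathds{H}^p$ bounds in the statement then follow from the uniform estimates by Fatou.

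Finally, for the identity $Z_t=D_tY_t$: the martingale $M_t\coloneqq Y_t+\int_0^t f(r, X_r, Y_r, Z_r)\mathrm{d}r=\mathds{E}[\xi\mid\mathcal{F}_t]$, with $\xi\coloneqq g(X_T)+\int_0^T f\,\mathrm{d}r$, has $Z$ as its representing integrand, so the Clark--Ocone representation gives $Z_t=\mathds{E}[D_t\xi\mid\mathcal{F}_t]$; on the other hand, taking $\mathds{E}[\cdot\mid\mathcal{F}_t]$ in \autoref{eq:sec2:malliavin_bsde} and recognising its integrand as $D_sf(r, X_r, Y_r, Z_r)$ gives $D_sY_t=\mathds{E}[D_s\xi\mid\mathcal{F}_t]$ for a.e.\ $s\leq t$, hence $D_tY_t=Z_t$ for a.e.\ $t$. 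The $C_b^{0,1}$/$C_b^1$ regularity of $\mu,\sigma,f,g$ ensures that $(s,t)\mapsto D_sY_t$ has a jointly measurable modification that is a.s.\ continuous in $t$ and, for fixed $t$, a.s.\ continuous in $s$ on $\{s\leq t\}$; evaluating this modification along the diagonal produces the continuous modification of $Z$ asserted. I expect this last point to be the main obstacle: making rigorous sense of the diagonal object $D_tY_t$ requires the joint-regularity and modification argument (which is precisely why the conclusion is phrased in terms of a \emph{continuous modification}), while the other delicate ingredient is the bookkeeping needed to keep every Picard iterate, and every conditional-expectation and It\^o-integral commutation, within the domain $\mathds{D}^{1,2}$.
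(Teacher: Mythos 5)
The paper itself contains no proof of this theorem: it is imported from Pardoux--Peng \cite{pardoux_backward_1992}, stated under the weakened hypotheses of \cite[Prop.~5.9]{el_karoui_backward_1997}, so there is no in-paper argument to compare against. Your Picard-iteration sketch is, in outline, exactly the classical proof in those references: differentiate each iterate via \autoref{lemma:malliavin_chain_rule} and the commutation rules for conditional expectations, Lebesgue and It\^o integrals, obtain $n$-uniform bounds from the boundedness of $\nabla_x f,\nabla_y f,\nabla_z f,\nabla_x g$ and the moment estimates \autoref{mean-squared continuity:dx} on $D_sX$, pass to the limit by closedness of the derivative operator, and identify $Z_t=D_tY_t$ on the diagonal.

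Two points in your write-up need repair, neither of which changes the route. First, the claim that ``subtracting the linear BSDEs for consecutive indices and using the contraction constant shows $(DY^n,DZ^n)$ is Cauchy'' is not immediate: the coefficients of the linear BSDE at step $n+1$ are evaluated along the $n$-th iterate, so the difference equation contains terms such as $\bigl(\nabla_y f(r,X_r,Y^n_r,Z^n_r)-\nabla_y f(r,X_r,Y^{n-1}_r,Z^{n-1}_r)\bigr)D_sY^n_r$, which no fixed contraction constant controls. The standard remedy -- and the one used in \cite{pardoux_backward_1992, el_karoui_backward_1997} -- is to bypass the Cauchy argument and invoke the closedness criterion of \cite[Lemma~1.2.3]{nualart_malliavin_2006}: $Y^n_t\to Y_t$ in $\mathds{L}^2$ together with $\sup_n \Expectation{\int_0^T\abs{D_sY^n_t}^2\mathrm{d}s}<\infty$ already yields $Y_t\in\mathds{D}^{1,2}$ and weak convergence of the derivatives (similarly for $Z$), after which the limit equation \autoref{eq:sec2:malliavin_bsde} is identified term by term using the continuity of the derivatives of $f$ and dominated convergence along an a.e.\ convergent subsequence. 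Second, taking $\CondExp{\cdot}{\mathcal{F}_t}$ in \autoref{eq:sec2:malliavin_bsde} gives $D_sY_t=\CondExp{D_s\xi}{\mathcal{F}_t}-\int_s^t D_sf(r,X_r,Y_r,Z_r)\mathrm{d}r$ rather than $\CondExp{D_s\xi}{\mathcal{F}_t}$; the omitted term vanishes only at $s=t$, which is all you actually use, so the conclusion $Z_t=D_tY_t$ for a.e.\ $t$ (and for all $t$ after passing to the continuous modification, as you correctly flag) survives. With these two adjustments your argument is the standard proof of the cited result.
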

We emphasize the linearity of \autoref{eq:sec2:malliavin_bsde} and remark that the corresponding random coefficients of the linear equation depend on the solution of \autoref{eq:fbsde}. Henceforth, in light of \autoref{thm:malliavin:sde} and \autoref{thm:malliavin:bsde}, we define $\{D_sX_t\}_{0\leq s,t\leq T}$ and $\{Z_t\}_{0\leq t\leq T}$ as the versions of the corresponding Malliavin derivatives satisfying \autoref{eq:sec2:malliavin_sde} and \autoref{eq:sec2:malliavin_bsde}, respectively. For the rest of the paper, in order to ease the presentation, we introduce the notations $\X{t}\coloneqq (X_t, Y_t, Z_t)$, $\DX{s}{t}\coloneqq (D_sX_t, D_sY_t, D_sZ_t)$ and $\fD(t, \X{t}, \DX{s}{t})\coloneqq \nabla_x f(t, \X{t})D_sX_t+\nabla_y f(t, \X{t})D_sY_t + \nabla_z f(t, \X{t})D_sZ_t$ for all $0\leq s, t\leq T$.

\paragraph{Path regularity and Hölder continuity.} For $\{X_t\}_{0\leq t\leq T}\in \mathds{S}^p(\mathds{R}^{d\times 1})$ we have that the solution of the forward SDE is a continuous $\mathds{R}^{d\times 1}$-valued random process which is bounded in the supremum norm. Similar statements can be made about its Malliavin derivative $\{D_sX_t\}_{0\leq s,t\leq T}$. In particular, the Hölder regularity estimates in \autoref{eq:sec2:sup_holder:x} and \autoref{mean-squared continuity:dx} ensure that the corresponding processes are not just continuous but also have a modification admitting to $\alpha$-Hölder continuous trajectories of order $\alpha\in (0, 1/2)$ provided by the Kolmogorov-Chentsov theorem -- see, e.g., \cite{karatzas_brownian_1998}. Since the $1/2$-Hölder regularity of $(Y, Z)$ plays a crucial role in the convergence analysis of the discrete scheme -- see \autoref{thm:osm:discretization} in particular --, we elaborate on the conditions under which the continuous parts of the solutions to \autoref{eq:fbsde:bsde} and \autoref{eq:sec2:malliavin_bsde} admit to similar estimates. 
Indeed, one can show that if the solutions $(Y, Z)\in\mathds{S}^p(\mathds{R})\times \mathds{H}^p(\mathds{R}^{d\times 1})$ of \autoref{eq:fbsde:bsde} satisfy the condition $\sup_{0\leq t\leq T}\Expectation{\abs{Z_t}^p}<\infty$ then there exists a constant $C_p$ such that
\begin{align}
	\Expectation{\abs{Y_t-Y_s}^p}\leq C_p\abs{t-s}^{p/2},
\end{align}
see \cite[Corollary 2.7]{hu_malliavin_2011}.
In particular, the $Y$ process admits to an $\alpha$-Hölder continuous modification of order $\alpha\in (0, 1/2-1/p)$. Under the conditions of \autoref{thm:malliavin:bsde}, this is naturally guaranteed, and for $p=2$ it implies the \emph{mean-squared continuity} of the $Y$ process. Moreover, the $Z$ process admits to a continuous modification solving \autoref{eq:sec2:malliavin_bsde}, which guarantees $Z\in\mathds{S}^p(\mathds{R}^{1\times d})$ and, in particular, boundedness in the supremum norm. Under stronger assumptions one can also establish a similar path regularity result of the control process. Imkeller and Dos Reis in \cite[Thm.5.5]{imkeller_path_2010} show that with additional conditions, essentially requiring second-order bounded differentiability of the corresponding coefficients $\mu, \sigma, f$ and $g$, the following also holds for all $p\geq 2$
\begin{align}\label{eq:sec2:sup_holder:z}
	\Expectation{\abs{Z_t-Z_s}^p}\leq C_p \abs{t-s}^{p/2}.
\end{align}
Hu et al. prove a similar result in \cite[Thm.2.6]{hu_malliavin_2011} under slightly different assumptions in the general non Markovian framework. We omit the explicit presentation of the necessary conditions for \autoref{eq:sec2:sup_holder:z} to hold, nevertheless emphasize that \autoref{ass:error_analysis} of the convergence analysis in \autoref{sec:error_analysis} ensures the path regularity of the $Z$ process and in particular implies mean-squared continuous trajectories.

\section{The discrete scheme}\label{sec:discrete_scheme}
In the following section the proposed discretization scheme is introduced. The objective of the discretization is to simultaneously solve the pair of FBSDE systems given by \autoref{eq:fbsde} and the FBSDE system of its Malliavin derivatives provided by \autoref{thm:malliavin:sde} and \autoref{thm:malliavin:bsde}. Therefore, we are concerned with the solution to the following pair of FBSDE systems
\begin{subequations}\label{eq:fbsde_fbsde}
	\begin{align}
		X_t &= x_0 + \int_0^t \mu(r, X_r)\mathrm{d}r + \int_0^t \sigma(r, X_r)\mathrm{d}W_r,\label{eq:fbsde_fbsde:sde}\\
		Y_t &= g(X_T) + \int_t^T f(r, \X{r})\mathrm{d}r - \int_t^T Z_r\mathrm{d}W_r,\label{eq:fbsde_fbsde:bsde}\\
		D_sX_t &= \begin{aligned}[t]
			\mathds{1}_{s\leq t}\left[\sigma(s, X_s) + \int_s^t \nabla_x \mu(r, X_r)D_sX_r\mathrm{d}r + \int_s^t \nabla_x\sigma(r, X_r)D_sX_r\mathrm{d}W_r\right],\label{eq:fbsde_fbsde:malliavin_sde}
		\end{aligned}\\
		D_sY_t &= \mathds{1}_{s\leq t}\left[\nabla_x g(X_T)D_sX_T + \int_t^T \fD(r, \X{r}, \DX{s}{r})\mathrm{d}r - \int_t^T D_sZ_r\mathrm{d}W_r\right]\label{eq:fbsde_fbsde:malliavin_bsde}.
	\end{align}
\end{subequations}
The solution is a pair of triples of stochastic processes $\{(X_t, Y_t, Z_t)\}_{0\leq t\leq T}$ and $\{(D_sX_t, D_sY_t, D_sZ_t)\}_{0\leq s,t\leq T}$ such that \autoref{eq:fbsde_fbsde} holds $\mathds{P}$ almost surely. Consider a discrete time partition $\pi^N\coloneqq \{t_0, \dots, t_N\}$ with $0=t_0<t_1<\dots<t_N=T$ and set $\Delta W_n\coloneqq W_{t_{n+1}} - W_{t_{n}}$, $\Delta t_n \coloneqq t_{n+1} - t_n$, $\abs{\pi}\coloneqq \max_{0\leq n\leq N-1} t_{n+1}-t_n$. We denote the discrete time approximations by $(X_n^\pi, Y_n^\pi, Z_n^\pi)$ and $(D_nX_m^\pi, D_nY_{m}^\pi, D_nZ_{m}^\pi)$ for each $0\leq n, m \leq N$.

The forward component in \autoref{eq:fbsde_fbsde:sde} is approximated by the classical Euler-Maruyama scheme, i.e.,
\begin{align}\label{scheme:sde:euler}
	X_0^\pi\coloneqq x_0,\quad X_{n+1}^\pi \coloneqq X_n^\pi + \mu(t_n, X_n^\pi)\Delta t_n + \sigma(t_n, X_n^\pi)\Delta W_n^\pi,
\end{align}
for each $n=0, \dots, N-1$. It is well-known -- see, e.g., \cite{kloeden_numerical_1992} -- that under standard Lipschitz assumptions on the drift and diffusion coefficients, these estimates admit to
\begin{align}\label{eq:euler:convergence_rate}
	\limsup_{\abs{\pi}\to 0} \frac{1}{\abs{\pi}}\Expectation{\abs{X_{t_{n}}-X_n^\pi}^2}<\infty.
\end{align}
Classically, the backward component in \autoref{eq:fbsde_fbsde:bsde} is approximated in two steps. In order to meet the necessary adaptivity requirements of the solution pair $(Y, Z)$, one takes appropriate conditional expectations of \autoref{eq:fbsde_fbsde:bsde} and the same equation multiplied with the Brownian increment $\Delta W_n^T$. Using standard properties of stochastic integrals, It\^{o}'s isometry and a \emph{theta-discretization} of the remaining time integrals with parameters $\vartheta_y, \vartheta_z>0$ subsequently give -- see, e.g., \cite{ruijter_fourier_2015}
\begin{subequations}\label{scheme:theta_bsde}
	\begin{align}
		Y_N^\pi &= g(X_N^\pi),\quad Z_N^\pi=\nabla_x g(X_N^\pi)\sigma(t_N, X_N^\pi),\\
		Z_n^\pi &= -\frac{1-\vartheta_z}{\vartheta_z}\CondExpn{n}{Z_{n+1}^\pi} + \frac{1}{\Delta t_n \vartheta_z}\CondExpn{n}{\Delta W_n^T Y_{n+1}^\pi} + \frac{1-\vartheta_z}{\vartheta_z}\CondExpn{n}{\Delta W_n^T f(t_{n+1}, \X{n+1}^\pi)},\label{scheme:theta_bsde:z}\\
		Y_n^\pi &= \Delta t_n \vartheta_y f(t_n, X_n^\pi, Y_n^\pi, Z_n^\pi) + \CondExpn{n}{Y_{n+1}^\pi} + \Delta t_n (1-\vartheta_y)\CondExpn{n}{f(t_{n+1}, \X{n+1}^\pi)}.\label{scheme:theta_bsde:y}
	\end{align}
\end{subequations}
In case $\vartheta_y=\vartheta_z=1$, this scheme is called the standard \emph{Euler scheme for BSDEs}.
\subsection{The OSM scheme}
The novelty of the hereby proposed discretization is that on top of solving \autoref{eq:fbsde_fbsde:bsde}, we also solve the linear BSDE in \autoref{eq:fbsde_fbsde:malliavin_bsde} driving the Malliavin derivatives of the solution pair. Exploiting the relation between $DY$ and $Z$ established by \autoref{thm:malliavin:bsde}, we set the control estimates according to the discrete time approximations of the Malliavin BSDE. As in the case of the forward component itself, the Malliavin derivative in \autoref{eq:fbsde_fbsde:malliavin_sde} is approximated by an Euler-Maruyama discretization, giving estimates
\begin{align}\label{scheme:sde:dx:euler}
	D_nX_{m}^\pi\coloneqq \begin{cases}
		\mathds{1}_{m=n}\sigma(t_n, X_n^\pi), &0\leq m\leq n\leq N,\\
		D_nX_{m-1}^\pi + \nabla_x\mu(t_{m-1}, X_{m-1}^\pi)D_nX_{m-1}^\pi\Delta t_{m-1} + \nabla_x \sigma(t_{m-1}, X_{m-1}^\pi)D_nX_{m-1}^\pi\Delta W_{m-1}, &0\leq n< m\leq N.
	\end{cases}
\end{align}
Unlike in the case of $X_n^\pi$, the convergence of these approximations is not straightforward due to the fact that the initial condition $D_nX_n^\pi=\sigma(t_n, X_n^\pi)$ already depends on the discrete approximation $X_n^\pi$ provided by \autoref{scheme:sde:euler}. Nonetheless, as we shall soon see, our discretization of the linear BSDE in \autoref{eq:fbsde_fbsde:malliavin_bsde} only relies on the approximations $D_nX_{n+1}^\pi$ for each $n=0,\dots,N-1$. This is a significant relaxation of the convergence criterion, as it can be shown that under relatively mild assumptions on the coefficients in \autoref{eq:fbsde_fbsde:sde}, $D_nX_{n+1}^\pi$ defined by \autoref{scheme:sde:dx:euler} inherits the convergence rate of \autoref{eq:euler:convergence_rate} -- see \autoref{appendix:discretization:dx} for details.

The discretization of the backward component in \autoref{eq:fbsde_fbsde:malliavin_bsde} is done as follows. For any $n=0, \dots, N-1$
\begin{align}
	D_{t_{n}}Y_{t_{n}} &= D_{t_{n}}Y_{t_{n+1}} + \int_{{t_{n}}}^{t_{n+1}} \fD(r, \X{r}, \DX{{t_{n}}}{r})\mathrm{d}r - \int_{{t_{n}}}^{{t_{n+1}}} D_{t_{n}}Z_r\mathrm{d}W_r,
\end{align}
subject to the terminal condition.
Multiplying this equation with $\Delta W_n$ from the left, It\^{o}'s isometry implies
\begin{align}
	\begin{split}
		\CondExpn{n}{\int_{t_{n}}^{t_{n+1}}D_{t_{n}}Z_r\mathrm{d}r} &= \left(\CondExpn{n}{\Delta W_n \left(D_{t_{n}}Y_{t_{n+1}} + \int_{t_{n}}^{t_{n+1}}\fD(r, \X{r}, \DX{{t_{n}}}{r})\mathrm{d}r\right)}\right)^T,\\
		D_{t_{n}}Y_{t_{n}} &= \CondExpn{n}{D_{t_{n}}Y_{t_{n+1}} + \int_{t_{n}}^{t_{n+1}}\fD(r, \X{r}, \DX{{t_{n}}}{r})\mathrm{d}r},
	\end{split}
\end{align}
where the transpose operation emerges from having defined $Z$ as a row vector and the Brownian motion as a column vector. In order to avoid implicitness on $Y$, we approximate the continuous time integrals with the left- and right rectangle rules, respectively, and obtain discrete time approximations
\begin{align}\label{eq:malliavin_bsde:discretization:step}
	D_nZ_n^\pi &= \frac{1}{\Delta t_n}\left(\CondExpn{n}{\Delta W_n \left(D_nY_{n+1}^\pi + \Delta t_n \fD(t_{n+1}, \X{n+1}^\pi, \DX{n}{n+1}^\pi)\right)}\right)^T,\\
	D_nY_n^\pi &=\CondExpn{n}{D_nY_{n+1}^\pi + \Delta t_n \fD(t_{n+1}, \X{n+1}^\pi, \DX{n}{n+1}^\pi)},
\end{align}
with $\X{n}^\pi\coloneqq (X_n^\pi, Y_n^\pi, Z_n^\pi)$ and $\DX{n}{n+1}^\pi\coloneqq (D_nX_{n+1}^\pi, D_nY_{n+1}^\pi, D_nZ_{n+1}^\pi)$.
At this point, to make the scheme viable, one relies on estimates $D_nY_{m}^\pi, D_nZ_{m}^\pi$ on top of the Euler-Maruyama approximations of $DX$ given by \autoref{scheme:sde:dx:euler}. This is done by a merged formulation of the Feynman-Kac formulae in \autoref{thm:feynman-kac} and the Malliavin chain rule in \autoref{lemma:malliavin_chain_rule}. Indeed, given the Markov property of the underlying processes, the Malliavin chain rule implies that
\begin{align}
	D_{t_{n}}Y_r=\nabla_x y(r, X_r)D_{t_{n}}X_r,\qquad D_{t_{n}}Z_r = \nabla_x z(r, X_r)D_{t_{n}}X_r\eqqcolon \gamma(r, X_r)D_{t_{n}}X_r,
\end{align}
for some deterministic functions $y:[0, T]\times \mathds{R}^{d\times 1}\to \mathds{R}$ and $z:[0, T]\times \mathds{R}^{d\times 1}\to\mathds{R}^{1\times d}$, where we defined $\gamma:[0, T]\times \mathds{R}^{d\times 1}\to\mathds{R}^{d\times d}$ as the Jacobian matrix of $z(r, X_r)$. Furthermore, due to the Feynman-Kac relations we also have $z(r, X_r)=\nabla_x y(r, X_r)\sigma(r, X_r)$ and therefore
\begin{align}
	D_{t_{n}}Y_r=z(r, X_r)\sigma^{-1}(r, X_r)D_{t_{n}}X_r,\qquad D_{t_{n}}Z_r = \gamma(r, X_r)D_{t_{n}}X_r.
\end{align}
Motivated by these relations, we approximate the discretized Malliavin derivatives in \autoref{eq:malliavin_bsde:discretization:step} according to
\begin{align}\label{eq:discrete_estimates:DY_DZ}
	D_{n}Y_{m}^\pi=Z_m^\pi\sigma^{-1}(t_{m}, X_{m}^\pi)D_{n}X_{m}^\pi,\qquad D_nZ_{m}^\pi = \Gamma_{m}^\pi D_nX_{m}^\pi,\qquad 0\leq, n,m\leq N.
\end{align}
Henceforth, the discrete approximations of the $Y$ process driven by \autoref{eq:fbsde_fbsde:bsde} are given in an identical fashion to \autoref{scheme:theta_bsde:y} with $\vartheta_y\in[0, 1]$ as a free parameter of the discretization. Moreover, in order to be able to control the $\mathds{L}^2$ projection error of $D_nZ_m^\pi$ with discrete Grönwall estimates -- see Step 1 of \autoref{thm:osm:discretization} in particular --, we make the $\nabla_z f$ part of $\fD$ implicit in $D_nZ_n^\pi$, and introduce the notation $\DX{n}{n+1, n}^\pi\coloneqq \left(D_nX_{n+1}^\pi, D_nY_{n+1}^\pi, D_nZ_{n}^\pi\right)$. Subject to the terminal conditions in \autoref{eq:fbsde_fbsde:bsde} and \autoref{eq:fbsde_fbsde:malliavin_bsde}, on top of the Malliavin chain rule estimates in \autoref{eq:discrete_estimates:DY_DZ}, this leads to the following discrete scheme, which we shall call the \emph{One Step Malliavin} (OSM) scheme
\begin{subequations}\label{scheme:osm}
	\begin{align}
		Y_N^\pi &= g(X_N^\pi),\quad Z_N^\pi =\nabla_x g(X_N^\pi) \sigma(t_N, X_N^\pi),\quad \Gamma_N^\pi=[\nabla_x (\nabla_x g\sigma)](t_N, X_N^\pi),\\
		\Gamma_n^\pi \sigma(t_n, X_n^\pi)=D_nZ_n^\pi &= \frac{1}{\Delta t_n}\left(\CondExpn{n}{\Delta W_n \left(D_nY_{n+1}^\pi  + \Delta t_n f^D(t_{n+1}, \X{n+1}^\pi, \DX{n}{n+1, n}^\pi)\right)}\right)^T,\label{scheme:osm:dz}\\
		Z_n^\pi &= \CondExpn{n}{D_nY_{n+1}^\pi + \Delta t_n f^D(t_{n+1}, \X{n+1}^\pi, \DX{n}{n+1, n}^\pi)},\label{scheme:osm:z}\\
		Y_n^\pi &= \vartheta_y \Delta t_n f(t_n, X_n^\pi, Y_n^\pi, Z_n^\pi) + \CondExpn{n}{Y_{n+1}^\pi + (1-\vartheta_y)\Delta t_n f(t_{n+1}, \X{n+1}^\pi)}.\label{scheme:osm:y}
	\end{align}
\end{subequations}
The scheme is made fully implementable by an appropriate parametrization to approximate the arising conditional expectations.

\begin{remark}[Comparison of discretizations]\label{remark:conditional_variance}
	There are two key differences between the standard Euler discretization in \autoref{scheme:theta_bsde} and the OSM scheme in \autoref{scheme:osm}. First, unlike in the former, the OSM scheme's solution is a triple of discrete random processes, including an additional layer of  $\Gamma$ estimates. Moreover, it can be seen that the estimate in \autoref{scheme:osm:z} exhibits a better conditional variance than that of \autoref{scheme:theta_bsde:z}. In case of the standard Euler discretization, the $Z$ process is approximated through It\^{o}'s isometry and the corresponding discrete time approximations include a $1/\Delta t_n$ factor -- second term in \autoref{scheme:theta_bsde:z} -- which leads to a quadratically exploding conditional variance of the resulting estimates. Several variance reduction techniques have been proposed to mitigate this problem -- we mention \cite{gobet_adaptive_2017, alanko_reducing_2013}.
	On the other hand, within the OSM scheme, the $Z$ process is approximated by the continuous solution of the Malliavin BSDE in \autoref{eq:fbsde_fbsde:malliavin_bsde} and therefore it carries the same conditional variance behavior as the $Y$ estimate. In case of a fully-implementable regression Monte Carlo setting, this explains why the OSM scheme may provide more accurate control approximations.
\end{remark}

\paragraph{Alternative formulations.} \autoref{scheme:osm} is not the first approach to the BSDE problem building on \autoref{thm:malliavin:bsde}. Turkedjiev in \cite{turkedjiev_two_2015} proposed a discrete time approximation scheme, where the $Z$ process is estimated by an integration by parts formula stemming from Malliavin calculus and discovered in \cite[Thm.3.1]{ma_representation_2002}. Hu et al. in \cite{hu_malliavin_2011} proposed an explicit scheme in the case of non Markovian BSDEs, where the control process is estimated using a representation formula implied by the linearity of the Malliavin BSDE \autoref{eq:fbsde_fbsde:malliavin_bsde} -- see \cite[Prop.5.5]{el_karoui_backward_1997}. Briand and Labart in \cite{briand_simulation_2014} offer a different approach to BSDEs, where building on chaos expansion formulas, the $Z$ process is taken as the Malliavin derivative of $Y$ given by \autoref{thm:malliavin:bsde}.
The difference between these formulations and \autoref{scheme:osm} is mostly twofold. The OSM scheme is concerned with solving the entire pair of FBSDE systems \autoref{eq:fbsde_fbsde} and not just the backward component in \autoref{eq:fbsde_fbsde:bsde}. This means that unlike in \cite{turkedjiev_two_2015, briand_simulation_2014, hu_malliavin_2011}, discrete time approximations give $\Gamma$ estimates as well.
Additionally, one important difference in the OSM scheme compared to the approaches \cite{turkedjiev_two_2015, hu_malliavin_2011} is that the conditional expectations in \autoref{scheme:osm} always project $\mathcal{F}_{t_{n+1}}$-measurable random variables on $\mathcal{F}_{t_{n}}$, whereas in the case of those works the arguments of the conditional expectations are $\mathcal{F}_T$-measurable. The most important implication of this difference is that -- unlike in \cite{turkedjiev_two_2015, hu_malliavin_2011} -- \autoref{scheme:osm} does not rely on discrete time estimates of the Malliavin derivatives $DX$ over the whole time window, only in between adjacent time steps $D_nX_{n+1}^\pi$. As shown in \autoref{appendix:discretization:dx}, under suitable regularity assumptions, $D_nX_{n+1}^\pi$ converges in the $\mathds{L}^2$-sense with a rate of $1/2$. 
However, similar statements cannot be made about all future time steps' $D_nX_{m}^\pi$ -- see also \cite[Remark 5.1]{hu_malliavin_2011}. This is a significant advantage in case one does not have analytical access to the trajectories of $\{D_{s}X_t\}_{0\leq s,t\leq T}$.

\section{Discretization error analysis}\label{sec:error_analysis}
Having introduced the discrete scheme simultaneously solving the FBSDE system itself and the FBSDE system of its solutions' Malliavin derivatives, we investigate the errors induced by the discretization of continuous processes in \autoref{scheme:osm}. It is known -- see \cite{bouchard_discrete-time_2004} -- that the $\mathds{L}^2$ discretization errors of the backward Euler scheme in \autoref{scheme:theta_bsde} admit to
\begin{align}\label{eq:euler_bsde:error_estimate}
	\max_{0\leq n\leq N} \Expectation{\abs{Y_{t_{n}} - Y_n^\pi}^2} + \Expectation{\sum_{n=0}^{N-1} \int_{t_{n}}^{t_{n+1}}\abs{Z_{r} - Z_n^\pi}^2\mathrm{d}r}\leq C\left(\Expectation{\abs{g(X_T) - g(X_n^\pi)}^2}  + \varepsilon^Z(\abs{\pi}) + \abs{\pi}\right),
\end{align} 
where $\varepsilon^Z(\abs{\pi})\coloneqq \Expectation{\sum_{n=0}^{N-1} \int_{t_{n}}^{t_{n+1}} \abs{Z_r - \bar{Z}_n^{n+1}}^2\mathrm{d}r}$ with $\bar{Z}_n^{n+1}\coloneqq 1/\Delta t_n \CondExpn{n}{\int_{t_{n}}^{t_{n+1}} Z_r\mathrm{d}r}$ according to \cite{zhang_numerical_2004}.
The purpose of the following section is to show a similar result for the proposed OSM scheme and prove that it is \emph{consistent} in the $\mathds{L}^2$-sense, i.e. the discrete time approximations errors converge to zero as the mesh size of the time partition $\abs{\pi}$ vanishes. In particular, we shall see that under standard Lipschitz assumptions on the driver $f$ of the BSDE \autoref{eq:fbsde_fbsde:bsde} and the driver $\fD$ of the linear Malliavin BSDE \autoref{eq:fbsde_fbsde:malliavin_bsde}, and additive noise in the forward diffusion, the convergence is of order $\mathcal{O}(\abs{\pi}^{1/2})$.
\begin{assumption}\label{ass:error_analysis}
	The following assumptions are in place.
	\begin{enumerate}[wide, labelwidth=0pt, labelindent=1em]
		\item[($\mathbf{A}^{\mu, \sigma}$)]SDE
		\begin{enumerate}[label={$(\mathbf{A}^{\mu, \sigma}_{\arabic*})$}, wide=\dimexpr\parindent+1em+\labelsep\relax, leftmargin=*]
			\item the forward equation has constant drift and diffusion coefficients (Arithmetic Brownian motion);\label{ass:error_analysis:sde:abm}
			\item the forward SDE has a uniformly elliptic diffusion coefficient, i.e. for any $\zeta\in\mathds{R}^d$ there exists a $\beta>0$ such that $\zeta^T\sigma\sigma^T\zeta>\beta\abs{\zeta}^2$\footnote{We remark that this condition is equivalent to $A=\sigma\sigma^T$ being a positive definite matrix.};\label{ass:error_analysis:sde:uniform_ellipticity}
		\end{enumerate} \label{ass:error_analysis:sde}
		\item[($\mathbf{A}^{f, g}$)] BSDE
		\begin{enumerate}[label={$(\mathbf{A}^{f, g}_{\arabic*})$}, wide=\dimexpr\parindent+1em+\labelsep\relax, leftmargin=*]
			\item $g\in C_b^{2+\alpha}(\mathds{R})$ with some $\alpha>0$, furthermore $g$ is also bounded;\label{ass:error_analysis:bsde:g}
			\item $f\in C_b^{0, 2, 2, 2}(\mathds{R})$;\label{ass:error_analysis:bsde:f}
			\item $f$ and its partial derivatives $\nabla_x f, \nabla_y f, \nabla_z f$ are all $1/2$-Hölder continuous in time.\label{ass:error_analysis:bsde:holder}
		\end{enumerate}\label{ass:error_analysis:bsde}
	\end{enumerate}
\end{assumption}
The conditions above are not minimal -- see also \autoref{subsection:error_analysis:assumptions_revisited}. Nevertheless, for the sake of the present analysis they are sufficient. In particular, since bounded continuous differentiability implies Lipschitz continuity due to the mean-value theorem, by \autoref{thm:malliavin:bsde} we have that under \autoref{ass:error_analysis} the FBSDE \autoref{eq:fbsde_fbsde:sde}--\autoref{eq:fbsde_fbsde:bsde} is Malliavin differentiable, and the Malliavin derivatives of its solutions satisfy the FBSDE \autoref{eq:fbsde_fbsde:malliavin_sde}--\autoref{eq:fbsde_fbsde:malliavin_bsde}. Additionally, due to \cite[Thm. 2.1]{delarue_forward_2006}, we can also exploit the following useful result from the theory of parabolic PDEs.
\begin{lemma}\label{lemma:pde_Cb_solution}
	Under \autoref{ass:error_analysis} the parabolic PDE in \autoref{eq:parabolic_pde} admits to a unique solution $u\in C_b^{1, 2}(\mathds{R})$.
\end{lemma}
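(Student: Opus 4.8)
The plan is to verify that, under Assumption~\ref{ass:error_analysis}, the quasilinear terminal-value problem~\eqref{eq:parabolic_pde} satisfies the structural hypotheses of the classical parabolic existence-and-regularity theory -- concretely, those of \cite[Thm.~2.1]{delarue_forward_2006} -- so that the statement follows by direct invocation. The substance of the argument is therefore a bookkeeping translation of the conditions $(\mathbf{A}^{\mu,\sigma})$ and $(\mathbf{A}^{f,g})$ into the language required by that result: uniform ellipticity and suitable regularity of the coefficients, controlled growth of the nonlinearity in the gradient, and enough smoothness of the terminal datum to propagate to a $C^{1,2}$ solution.

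First I would record that the linear part of the operator is as regular as can be hoped for: by~\ref{ass:error_analysis:sde:abm} the drift $\mu$ and the diffusion $\sigma$ are constant, so the principal term $\tfrac12\Tr{\sigma\sigma^{T}\Hess{u}}$ has constant coefficients which, by~\ref{ass:error_analysis:sde:uniform_ellipticity} (equivalently, $A=\sigma\sigma^{T}$ positive definite), are uniformly elliptic on $[0,T]\times\mathds{R}^{d}$, and the transport term $\braket{\mu}{\nabla_{x}u}$ is linear with constant coefficients. Next I would treat the nonlinearity $F(t,x,y,p)\coloneqq f(t,x,y,p\sigma)$ as a function of $(t,x,y,p)\in[0,T]\times\mathds{R}^{d}\times\mathds{R}\times\mathds{R}^{1\times d}$: by~\ref{ass:error_analysis:bsde:f}, $f\in C_{b}^{0,2,2,2}(\mathds{R})$, so $F$ is bounded, globally Lipschitz in $(y,p)$ (with constant controlled by that of $f$ and by $\abs{\sigma}$), continuously differentiable with bounded derivatives in $(x,y,p)$, and -- crucially for the gradient variable -- of at most linear growth in $p$, which lies well inside the natural growth condition needed for classical solvability; by~\ref{ass:error_analysis:bsde:holder}, $f$, and hence $F$, is $1/2$-Hölder in $t$ (and Lipschitz, hence Hölder, in $x$). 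Finally, by~\ref{ass:error_analysis:bsde:g}, the terminal datum $g\in C_{b}^{2+\alpha}(\mathds{R})$ is bounded with Hölder-continuous Hessian, which is exactly the compatibility and regularity of the data producing a $C^{1,2}$ solution up to $t=T$.

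With these checks in place, \cite[Thm.~2.1]{delarue_forward_2006} delivers a unique bounded classical solution $u$ whose derivatives $\partial_{t}u$, $\nabla_{x}u$ and $\Hess{u}$ are bounded and continuous on $[0,T]\times\mathds{R}^{d}$, i.e.\ $u\in C_{b}^{1,2}(\mathds{R})$; an alternative route assembles the same conclusion from the interior Schauder estimates for quasilinear parabolic equations (Ladyzhenskaya--Solonnikov--Ural'tseva), the a priori sup-norm bound on $u$ from the maximum principle (using boundedness of $f$ and $g$), and a Bernstein-type global gradient bound, while uniqueness within the class of bounded classical solutions follows from the comparison principle, applicable since $F$ is Lipschitz in $(y,p)$. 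I do not expect a genuine obstacle: the lemma is essentially a verification against a known theorem. The one point worth flagging is the \emph{global-in-space} boundedness of $\nabla_{x}u$ and $\Hess{u}$ on the unbounded domain $\mathds{R}^{d}$, which does not follow merely from boundedness of the coefficients and data and genuinely requires either the cited theorem or, consistently with it, the Feynman--Kac identification~\eqref{thm:feynman-kac} $Y_{t}=u(t,X_{t})$, $Z_{t}=\nabla_{x}u(t,X_{t})\sigma(t,X_{t})$ together with the sup-norm bounds on $(Y,Z)$ recalled in Section~\ref{sec:bsdes_and_malliavin}.
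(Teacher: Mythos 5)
Your proposal is correct and takes essentially the same route as the paper: the paper offers no standalone proof of this lemma, obtaining it directly as a consequence of \cite[Thm.~2.1]{delarue_forward_2006} under \autoref{ass:error_analysis}, which is exactly the invocation you make after verifying the hypotheses. Your additional bookkeeping (uniform ellipticity from \ref{ass:error_analysis:sde:uniform_ellipticity}, regularity of $f$, $g$ from \ref{ass:error_analysis:bsde:g}--\ref{ass:error_analysis:bsde:holder}) and the remark on global gradient/Hessian bounds simply make explicit what the paper leaves to the cited reference.
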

Due to the Markov nature of the FBSDE system, the solutions of \autoref{eq:fbsde_fbsde:bsde} can be written as $Y_t=y(t, X_t), Z_t=z(t, X_t)$ for some deterministic functions $y:[0, T]\times \mathds{R}^{d\times 1}\to \mathds{R}$, $z:[0, T]\times \mathds{R}^{d\times 1}\to \mathds{R}^{1\times d}$. Furthermore, provided by \autoref{lemma:pde_Cb_solution}, one can use a merged formulation of the Malliavin chain rule lemma \autoref{lemma:malliavin_chain_rule} and the non-linear Feynman-Kac relations to get the following formulas for the solutions of \autoref{eq:fbsde_fbsde:malliavin_bsde}
\begin{align}\label{eq:malliavin_chain_rule_PLUS_feynman_kac}
	D_sY_t = \nabla_x y(t, X_t)D_sX_t=z(t, X_t)\sigma^{-1}(t, X_t)D_sX_t,\qquad D_sZ_t = \nabla_x z(t, X_t) D_sX_t\eqqcolon \gamma(t, X_t)D_sX_t,
\end{align}
where $\gamma:[0, T]\times \mathds{R}^{d\times 1}\to \mathds{R}^{d\times d}$ and similarly $\Gamma_t\coloneqq \gamma(t, X_t)$. We remark that in our setting $\sigma\in \mathds{R}^{d\times d}$, the existence of the inverse is guaranteed by the uniform ellipticity condition set on $\sigma$ in \autoref{ass:error_analysis}. In case the Brownian motion and the forward diffusion have different dimensions, similar statements can be made about right inverses -- see \cite{turkedjiev_two_2015}. Another important implication of the estimate above is that \autoref{ass:error_analysis}, through \autoref{lemma:pde_Cb_solution}, also implies that the driver of the Malliavin BSDE $\fD$ is Lipschitz continuous in its spatial arguments within the bounded domain. Indeed, the mean-value theorem for $f\in C_b^{0, 2, 2, 2}(\mathds{R})$ implies that $f$ and all its first-order derivatives in $(x, y, z)$ are  Lipschitz continuous, consequently for any uniformly bounded argument $(DX, DY, DZ)$ the following holds
\begin{align}\label{eq:lipschitz:f_fD}
	\begin{split}
		\abs{f(t_1, \mathbf{x}_1) - f(t_2, \mathbf{x}_2)}&\leq L_f\left(\abs{t_1-t_2}^{1/2} + \abs{x_1 - x_2} + \abs{y_1-y_2} + \abs{z_1-z_2}\right),\\
		\abs{\xi_1}, \abs{\eta_1}, \abs{\zeta_1}\leq L_{\fD}:\quad \abs{\fD(t_1, \mathbf{x}_1, \boldsymbol{\xi}_1) - \fD(t_2, \mathbf{x}_2, \boldsymbol{\xi}_2)}&\leq\begin{aligned}[t]
			L_{\fD}\Big(&\abs{t_1-t_2}^{1/2} + \abs{x_1 - x_2} + \abs{y_1-y_2} + \abs{z_1-z_2}\\
			&+\abs{\xi_1 - \xi_2} + \abs{\eta_1-\eta_2} + \abs{\zeta_1-\zeta_2}\Big),
		\end{aligned} 
	\end{split}
\end{align}
with $\mathbf{x}_i=\left(x_i, y_i, z_i\right)$, $\boldsymbol{\xi}_i\coloneqq \left(\xi_i, \eta_i, \zeta_i\right)$, $i=1, 2$;
for all $t_i\in[0, T]$, $x_i\in\mathds{R}^{d\times 1}$, $y_i\in\mathds{R}$, $z_i, \eta_i\in \mathds{R}^{1\times d}$ and $\xi_i, \zeta_i\in\mathds{R}^{d\times d}$, where $L_f, L_{\fD}>0$. Here we also used the assumption of Hölder continuity established by \ref{ass:error_analysis:bsde:holder}.

Given the usual time partition, it is clear that the discrete approximations \autoref{scheme:osm} are deterministic functions of $X_n^\pi$ and thereupon we put $Y_n^\pi \eqqcolon y^\pi(t_n, X_n^\pi)\eqqcolon y_n^\pi(X_n^\pi),\quad Z_n^\pi \eqqcolon z^\pi(t_n, X_n^\pi)\eqqcolon z_n^\pi(X_n^\pi),\quad \Gamma_n^\pi \eqqcolon \gamma^\pi(t_n, X_n^\pi)\eqqcolon \gamma_n^\pi(X_n^\pi)$. In light of \autoref{eq:discrete_estimates:DY_DZ}, we use the approximations
\begin{align}\label{eq:sec:error_analysis:malliavin_chain_rule}
	D_nY_{n+1}^\pi = Z_{n+1}^\pi\sigma^{-1}(t_{n+1}, X_{n+1}^\pi)D_nX_{n+1}^\pi,\qquad D_nZ_{n}^\pi = \Gamma_{n}^\pi D_nX_{n}^\pi.
\end{align}
We introduce the short-hand notations $\Delta X_n^\pi\coloneqq X_{t_{n}} - X_n^\pi$, $\Delta Y_n^\pi = Y_{t_{n}} - Y_n^\pi, \Delta Z_n^\pi=Z_{t_{n}} - Z_n^\pi$, $\Delta D_nX_{n+1}^\pi\coloneqq D_{t_{n}}X_{t_{n+1}} - D_nX_{n+1}^\pi$, $\Delta D_nY_{n+1}^\pi\coloneqq D_{t_{n}}Y_{t_{n+1}} - D_nY_{n+1}^\pi$ and $\Delta \Gamma_{n}^\pi\coloneqq \Gamma_{t_{n}} - \Gamma_{n}^\pi$.
Under the conditions of \autoref{ass:error_analysis}, provided by \autoref{thm:malliavin:sde} and \autoref{thm:malliavin:bsde}, we have that the processes $(X, Y, Z, DX, DY)$ are all mean-squared continuous in time, i.e. there exists a general constant $C$ such that for all $s, t, r\in [0, T]$
\begin{align}\label{eq:error_analysis:mean-squared_continuities}
	\begin{split}
		\Expectation{\abs{X_t-X_r}^2}&\leq C\abs{t-r},\quad \Expectation{\abs{Y_t-Y_r}^2}\leq C\abs{t-r},\quad \Expectation{\abs{Z_t-Z_r}^2}\leq C\abs{t-r},\\
		\Expectation{\abs{D_sY_t - D_sY_r}^2} &\leq C\abs{t-r},\quad \Expectation{\abs{D_sX_t - D_sX_r}^2}\leq C\abs{t-r}.
	\end{split}
\end{align}
Finally, we use
\begin{align}\label{def:dzprojection}
	\DZprojection{n}{n+1}\coloneqq \frac{1}{\Delta t_n}\CondExpn{n}{\int_{t_{n}}^{t_{n+1}} D_{t_{n}}Z_r\mathrm{d}r}
\end{align}
for the $\mathds{L}^2$-projection of the corresponding Malliavin derivative with respect to the $\mathcal{F}_{t_{n}}$ $\sigma$-algebra, with which we can define the $\mathds{L}^2(\mathds{R}^{d\times d})$-regularity of $DZ$ as follows
\begin{align}\label{def:dz:l2_regularity}
	\varepsilon^{DZ}(\abs{\pi})\coloneqq \sum_{n=0}^{N-1} \Expectation{\int_{t_{n}}^{t_{n+1}} \abs{D_{t_{n}}Z_r - \DZprojection{n}{n+1}}^2\mathrm{d}r}.
\end{align}
Under the condition of constant diffusion coefficients in \autoref{ass:error_analysis}, we have that $D_{t_{n}}Z_r=D_{t_{m}}Z_r=\Gamma_r\sigma$ for any $t_n, t_m<r$. Thereafter, exploiting the fact that due to \autoref{ass:error_analysis} the terminal condition of the Malliavin BSDE \autoref{eq:fbsde_fbsde:malliavin_bsde} is also Lipschitz continuous,  one can apply \cite[Thm.3.1]{zhang_numerical_2004} and get 
\begin{align}\label{eq:error_analyis:epsilon^DZ}
	\limsup_{\abs{\pi}\to 0} \frac{1}{\abs{\pi}} \varepsilon^{DZ}(\abs{\pi})<\infty.
\end{align}

\subsection{Discrete-time approximation error}
The main goal of this section is to give an upper bound for the discrete time approximation errors defined by
\begin{align}\label{eq:osm:disc:total_error}
	\mathcal{E}^{\pi}(\abs{\pi})\coloneqq \max_{0\leq n\leq N} \Expectation{\abs{\Delta Y_n^\pi}^2} + \max_{0\leq n\leq N} \Expectation{\abs{\Delta Z_n^\pi}^2} + \Expectation{\sum_{n=0}^{N-1} \int_{t_{n}}^{t_{n+1}}\abs{\left(\Gamma_r - \Gamma_n^\pi\right)\sigma}^2\mathrm{d}r}\leq C\abs{\pi}.
\end{align} 
This is established by the following theorem.
\begin{theorem}[Consistency of the OSM scheme]\label{thm:osm:discretization}
	Under \autoref{ass:error_analysis}, the scheme defined by \autoref{scheme:osm} for any $\vartheta_y\in[0, 1]$ has $\mathds{L}^2$-convergence of order $1/2$, i.e.
	\begin{align}\label{eq:error_analysis:convergence_rate}
		\limsup_{\abs{\pi}\to 0} \frac{1}{\abs{\pi}}\mathcal{E}^{\pi}(\abs{\pi})<\infty.
	\end{align}
\end{theorem}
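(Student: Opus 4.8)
The plan is to adapt the classical BSDE error-analysis machinery (in the spirit of Zhang, Bouchard–Touzi, and the theta-scheme analysis of Ruijter–Oosterlee) but carried out simultaneously for the pair of BSDEs \autoref{eq:fbsde_fbsde:bsde} and \autoref{eq:fbsde_fbsde:malliavin_bsde}, with the Malliavin chain-rule representations \autoref{eq:malliavin_chain_rule_PLUS_feynman_kac} and \autoref{eq:sec:error_analysis:malliavin_chain_rule} used to close the system. I would organize the argument into the three steps already hinted at in the text. \textbf{Step 1: the $\Gamma$-error bound.} Here one writes the continuous-time identity for $\DZprojection{n}{n+1}$ by multiplying \autoref{eq:fbsde_fbsde:malliavin_bsde} by $\Delta W_n$ and taking $\CondExpn{n}{\cdot}$, then subtracts the discrete recursion \autoref{scheme:osm:dz}. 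Using It\^o isometry, the $1/\Delta t_n$ prefactor is controlled because the numerator is a genuine conditional-expectation-of-increment difference (so one gains a factor $\Delta t_n$ in variance, exactly the improvement highlighted in \autoref{remark:conditional_variance}); the Lipschitz bounds \autoref{eq:lipschitz:f_fD}, the mean-squared continuities \autoref{eq:error_analysis:mean-squared_continuities}, \autoref{eq:euler:convergence_rate}, the $DX$-convergence result of \autoref{appendix:discretization:dx}, and the regularity term $\varepsilon^{DZ}(\abs{\pi})$ from \autoref{def:dz:l2_regularity} together yield a bound of $\Gamma$-error in terms of the $DY$-errors plus $\abs{\pi}$ plus $\varepsilon^{DZ}$. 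Crucially, because $\nabla_z f$ was made implicit in $D_nZ_n^\pi$ (the $\DX{n}{n+1,n}^\pi$ notation), the resulting inequality can be rearranged — for $\abs{\pi}$ small enough the implicit coefficient is absorbed — to isolate $\Expectation{\abs{(\Gamma_r-\Gamma_n^\pi)\sigma}^2}$, which is where uniform ellipticity \ref{ass:error_analysis:sde:uniform_ellipticity} enters to pass between $D_nZ_n^\pi$ and $\Gamma_n^\pi$.

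\textbf{Step 2: the $Z$-error (control) bound.} Subtracting \autoref{scheme:osm:z} from the $\CondExpn{n}{\cdot}$-projected form of \autoref{eq:fbsde_fbsde:malliavin_bsde} (using $Z_{t_n}=D_{t_n}Y_{t_n}$ from \autoref{thm:malliavin:bsde}), one gets $\Delta Z_n^\pi$ in terms of $\CondExpn{n}{\Delta D_nY_{n+1}^\pi}$, a time-integral discretization residual bounded by $C\abs{\pi}$ via \autoref{eq:lipschitz:f_fD} and \autoref{eq:error_analysis:mean-squared_continuities}, and an $\fD$-difference term involving $\Delta D_nX_{n+1}^\pi$, $\Delta D_nY_{n+1}^\pi$, and $\Delta\Gamma_n^\pi$. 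The representation \autoref{eq:sec:error_analysis:malliavin_chain_rule}, $D_nY_{n+1}^\pi = Z_{n+1}^\pi\sigma^{-1}D_nX_{n+1}^\pi$, together with $D_{t_n}Y_{t_{n+1}} = Z_{t_{n+1}}\sigma^{-1}D_{t_n}X_{t_{n+1}}$, lets one expand $\Delta D_nY_{n+1}^\pi$ as (boundedness of $z$, $\sigma^{-1}$, $DX$ from \autoref{lemma:pde_Cb_solution}, \autoref{thm:malliavin:sde})$\times(\abs{\Delta Z_{n+1}^\pi} + \abs{\Delta D_nX_{n+1}^\pi} + $ path-regularity residuals$)$. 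Feeding Step 1's $\Gamma$-bound into this produces a closed inequality of the form $\Expectation{\abs{\Delta Z_n^\pi}^2} \le (1+C\Delta t_n)\Expectation{\abs{\Delta Z_{n+1}^\pi}^2} + C(\abs{\pi}^2 + \text{residuals})$, so a discrete Grönwall argument gives $\max_n\Expectation{\abs{\Delta Z_n^\pi}^2}\le C(\Expectation{\abs{\Delta Z_N^\pi}^2} + \abs{\pi})$; the terminal term is $O(\abs{\pi})$ since $\nabla_x g\sigma\in C_b^1$ by \ref{ass:error_analysis:bsde:g} and \ref{ass:error_analysis:sde:abm} and $\Expectation{\abs{\Delta X_N^\pi}^2}\le C\abs{\pi}$.

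\textbf{Step 3: the $Y$-error bound and assembly.} This is the most standard piece: subtract \autoref{scheme:osm:y} from the $\CondExpn{n}{\cdot}$-form of \autoref{eq:fbsde_fbsde:bsde}, use the Lipschitz property of $f$ in \autoref{eq:lipschitz:f_fD}, conditional Jensen, Young's inequality to split the $\vartheta_y\Delta t_n f(t_n,X_n^\pi,Y_n^\pi,Z_n^\pi)$ implicit term, and the theta-scheme time-integral residual bounded via mean-squared continuity of $(X,Y,Z)$; this yields $\Expectation{\abs{\Delta Y_n^\pi}^2}\le (1+C\Delta t_n)\Expectation{\abs{\Delta Y_{n+1}^\pi}^2} + C\Delta t_n\Expectation{\abs{\Delta Z_n^\pi}^2} + C\abs{\pi}^2 + C\abs{\pi}\,\varepsilon^Z(\abs{\pi})/N$-type terms, and discrete Grönwall plus Step 2 closes it. Finally one sums the three pieces into $\mathcal{E}^\pi(\abs{\pi})$, uses \autoref{eq:error_analyis:epsilon^DZ} (and the analogous $\varepsilon^Z(\abs{\pi}) = O(\abs{\pi})$ from \autoref{eq:euler_bsde:error_estimate} and \cite{zhang_numerical_2004}) together with \autoref{eq:euler:convergence_rate} and the $DX$-rate from \autoref{appendix:discretization:dx}, to conclude $\mathcal{E}^\pi(\abs{\pi})\le C\abs{\pi}$.

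\textbf{Main obstacle.} The delicate point is Step 1: controlling the $1/\Delta t_n$ factor in \autoref{scheme:osm:dz} and then \emph{rearranging} the resulting inequality to actually isolate the $\Gamma$-error. One must verify that the $\fD$-term contributes, via It\^o isometry, a factor $\Delta t_n$ that cancels the $1/\Delta t_n$, and that the implicit $\nabla_z f \cdot D_nZ_n^\pi = \nabla_z f\cdot \Gamma_n^\pi\sigma$ term — which appears on both sides — has coefficient bounded by $L_{\fD}\abs{\sigma}\sqrt{\Delta t_n}$ (again from the increment structure) so that it can be absorbed into the left-hand side for $\abs{\pi}$ small. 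Getting the algebra of this absorption right, uniformly in $n$, while simultaneously tracking that the bound is in terms of the \emph{not-yet-estimated} $DY$-errors so that Steps 1–2 must be coupled and solved together by a single Grönwall on the combined quantity, is the technical heart of the proof; the rest is careful but routine bookkeeping with the Lipschitz and mean-squared-continuity estimates already assembled in the section.
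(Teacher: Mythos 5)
Your overall strategy is the same as the paper's (project the Malliavin BSDE in \autoref{eq:fbsde_fbsde:malliavin_bsde} against $\Delta W_n$, bound the error to the $\mathds{L}^2$-projection $\DZprojection{n}{n+1}$, obtain pointwise $Z$-estimates through $Z_{t_n}=D_{t_n}Y_{t_n}$, a theta-scheme estimate for $Y$, discrete Gr\"onwall, and Zhang-type $\mathds{L}^2$-regularity of $DZ$), but there is a genuine gap in how you close the recursions. In Step 2 you claim that the $Z$-recursion closes on its own, yielding $\max_n\Expectation{\abs{\Delta Z_n^\pi}^2}\leq C\left(\Expectation{\abs{\Delta Z_N^\pi}^2}+\abs{\pi}\right)$, and only in Step 3 do you bring in $Y$. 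This cannot work as stated: the coefficients of $\fD$ are evaluated at $(t_{n+1},\X{n+1}^\pi)$, so the Lipschitz bound \autoref{eq:lipschitz:f_fD} necessarily injects $\Delta t_n\,\Expectation{\abs{\Delta Y_{n+1}^\pi}^2}$ into the $Z$-recursion, while the $Y$-recursion contains $\Delta t_n\,\Expectation{\abs{\Delta Z_n^\pi}^2}$ and $\Delta t_n\,\Expectation{\abs{\Delta Z_{n+1}^\pi}^2}$ (the implicit $\vartheta_y$-term and the explicit part). A sequential Gr\"onwall ($Z$ first, then $Y$) is therefore circular, and substituting one max-bound into the other leaves a constant of order $CT$ in front of the unknown maximum, which is not less than one, so the argument does not close. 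The correct resolution -- and this is exactly the paper's Step 4 -- is to add the two recursions and apply a single discrete Gr\"onwall to $\Expectation{\abs{\Delta Y_n^\pi}^2}+\Expectation{\abs{\Delta Z_n^\pi}^2}$, using that all coupling terms carry a factor $\Delta t_n$. You do anticipate a ``single Gr\"onwall on the combined quantity,'' but you attribute it to the coupling of Steps 1--2 ($DZ$ with $Z$), which is in fact handled by direct substitution of the Step 1 bound into the $Z$-estimate (the $DZ$ bound is only summed at the very end for the $\Gamma$-consistency); the coupling that genuinely forces the joint treatment is the $Y$--$Z$ one, which your plan misses.

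A secondary point: you propose to control $\Expectation{\abs{\Delta X_{n}^\pi}^2}$ and $\Expectation{\abs{\Delta D_nX_{n+1}^\pi}^2}$ by the Euler rates \autoref{eq:euler:convergence_rate} and \autoref{appendix:discretization:dx}, and to expand $\Delta D_nY_{n+1}^\pi$ via a product decomposition invoking boundedness of $z$, $\sigma^{-1}$ and $DX$. Under \autoref{ass:error_analysis} the paper does not need this: by \ref{ass:error_analysis:sde:abm} one has $X_n^\pi=X_{t_n}$ and $D_nX_{n+1}^\pi=\sigma=D_{t_n}X_{t_{n+1}}$ exactly, so these error terms vanish, $\Delta D_nY_{n+1}^\pi=\Delta Z_{n+1}^\pi$, and $D_{t_n}Z_{t_n}-D_nZ_n^\pi=\Delta\Gamma_n^\pi\sigma$; this is also what makes the terminal terms drop out of the Gr\"onwall bound. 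In the general (non-additive-noise) case your product decomposition requires a uniform bound on the \emph{discrete} approximation $Z_{n+1}^\pi$ (or on $D_nX_{n+1}^\pi$), which is not available without a truncation procedure -- this is precisely the obstruction discussed in \autoref{subsection:error_analysis:assumptions_revisited} and the reason the theorem is stated under constant coefficients. Within the stated assumptions this is harmless, but as written your sketch overstates what the cited boundedness results deliver.
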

\begin{proof}
	Throughout the proof $C$ denotes a constant independent of the time partition, whose value may vary from line to line. We proceed in steps and prove estimates for each component of the discretization error.
	\begin{steps}[wide, labelwidth=0pt, labelindent=0pt]
		\item \label{thm:osm:discretization:step:1}\textit{Estimate for $DZ$.} First, we establish an estimate for the corresponding discretization error of the $DZ$-component with respect to the $\mathds{L}^2$-projection $\DZprojection{n}{n+1}$.
		Let us fix $n=0, \dots, N-1$.
		Multiplying the Malliavin BSDE in \autoref{eq:fbsde_fbsde:malliavin_bsde} with $\Delta W_n$ and applying It\^{o}'s isometry, we find that the definition in \autoref{def:dzprojection} can be written as follows
		\begin{align}
			\Delta t_n\DZprojection{n}{n+1} &= \begin{aligned}[t]
				\left(\CondExpn{n}{\Delta W_n\Delta D_nY_{n+1}^\pi}\right)^T + \left(\CondExpn{n}{\Delta W_n\int_{t_{n}}^{t_{n+1}} \fD(r, \X{r}, \DX{t_{n}}{r})\mathrm{d}r}\right)^T,
			\end{aligned}
		\end{align}
		Combining this with the definition of the discrete scheme (\autoref{scheme:osm:dz}) gives
		\begin{align}
			\Delta t_n(\DZprojection{n}{n+1} - D_nZ_{n}^\pi) &= \begin{aligned}[t]
				&\left(\CondExpn{n}{\Delta W_n\left(\Delta D_nY_{n+1}^\pi -\CondExpn{n}{\Delta D_nY_{n+1}^\pi}\right)}\right)^T\\ 
				&+ \left(\CondExpn{n}{\Delta W_n\left(\int_{t_{n}}^{t_{n+1}} \fD(r, \X{r}, \DX{t_{n}}{r}) - f^D(t_{n+1}, \X{n+1}^\pi, \DX{n}{n+1, n}^\pi)\mathrm{d}r\right)}\right)^T,
			\end{aligned}
		\end{align}
		using the tower property of conditional expectations. In Frobenius norm, the conditional $\mathds{L}^2(\mathds{R}^d)$ Cauchy-Schwarz inequality subsequently implies
		\begin{align}
			\Delta t_n \abs{\DZprojection{n}{n+1} - D_nZ_{n}^\pi}
			&\leq \begin{aligned}[t]
				&(d\Delta t_n)^{1/2}\left(\CondExpn{n}{\abs{\Delta D_nY_{n+1}^\pi -\CondExpn{n}{\Delta D_nY_{n+1}^\pi}}^2}\right)^{1/2}\\
				&+(d\Delta t_n)^{1/2} \left(\CondExpn{n}{\abs{\int_{t_{n}}^{t_{n+1}} \fD(r, \X{r}, \DX{t_{n}}{r}) - f^D(t_{n+1}, \X{n+1}^\pi, \DX{n}{n+1, n}^\pi)\mathrm{d}r}^2}\right)^{1/2},
			\end{aligned}
		\end{align}
		by the independence of Brownian increments. Hence, due to the $L^2([0, T]; \mathds{R}^d)$ Cauchy-Schwarz inequality, we gather
		\begin{align}
			\Delta t_n \abs{\DZprojection{n}{n+1} - D_nZ_{n}^\pi}&\leq \begin{aligned}[t]
				&(d\Delta t_n)^{1/2}\left(\CondExpn{n}{\abs{\Delta D_nY_{n+1}^\pi -\CondExpn{n}{\Delta D_nY_{n+1}^\pi}}^2}\right)^{1/2}\\
				&+d^{1/2}\Delta t_n\left(\CondExpn{n}{\int_{t_{n}}^{t_{n+1}} \abs{\fD(r, \X{r}, \DX{t_{n}}{r}) - \fD(t_{n+1}, \X{n+1}^\pi, \DX{n}{n+1, n}^\pi)}^2\mathrm{d}r}\right)^{1/2}.
			\end{aligned}
		\end{align}
		Using the inequality $a, b\in\mathds{R}: (a+b)^2\leq 2(a^2+b^2)$ we collect the following $\mathds{L}^2(\mathds{R}^{d\times d})$ upper bound
		\begin{align}
			\Delta t_n \Expectation{\abs{\DZprojection{n}{n+1} - D_nZ_{n}^\pi}^2}&\leq \begin{aligned}[t]
				&2d\left(\Expectation{\abs{\Delta D_nY_{n+1}^\pi}^2} -\Expectation{\abs{\CondExpn{n}{\Delta D_nY_{n+1}^\pi}}^2}\right)\\
				&+2d\Delta t_n \Expectation{\int_{t_{n}}^{t_{n+1}} \abs{\fD(r, \X{r}, \DX{t_{n}}{r}) - \fD(t_{n+1}, \X{n+1}^\pi, \DX{n}{n+1	, n}^\pi)}^2\mathrm{d}r}.
			\end{aligned}
		\end{align}
		According to \autoref{eq:lipschitz:f_fD}, the uniform boundedness of $\DX{t_{n}}{r}$ implies that $\fD$ is Lipschitz continuous in all its spatial arguments and $1/2$-Hölder continuous in time, with a universal constant $L_{\fD}$. This, combined with the mean-squared continuities of the $X, Y, Z, D_{t_{n}}X$ and $D_{t_{n}}Y$ in \autoref{eq:error_analysis:mean-squared_continuities}, implies
		\begin{align}\label{eq:dzprojection:upperbound:step:minus2}
			\Delta t_n \Expectation{\abs{\DZprojection{n}{n+1} - D_nZ_{n}^\pi}^2}&\leq \begin{aligned}[t]
				&2d\left(\Expectation{\abs{\Delta D_nY_{n+1}^\pi}^2} -\Expectation{\abs{\CondExpn{n}{\Delta D_nY_{n+1}^\pi}}^2}\right)\\	&+16dL_{\fD}^2\Delta t_n\begin{aligned}[t]
					\Bigg\{&C\Delta t_n^2 + 2\Delta t_n\left(\Expectation{\abs{\Delta X_{n+1}^\pi}^2} + \Expectation{\abs{\Delta Y_{n+1}^\pi}^2} + \Expectation{\abs{\Delta Z_{n+1}^\pi}^2}\right)\\
					&+2\Delta t_n\left(\Expectation{\abs{\Delta D_nX_{n+1}^\pi}^2}+ \Expectation{\abs{\Delta D_nY_{n+1}^\pi}^2}\right)\\
					&+\Expectation{\int_{t_{n}}^{t_{n+1}} \abs{D_{t_{n}}Z_r - D_nZ_{n}^\pi}^2\mathrm{d}r}\Bigg\},
				\end{aligned}
			\end{aligned}
		\end{align}
		where we again used $(a+b)^2\leq 2(a^2+b^2)$ for $a,b\in\mathds{R}$.
		By the definition of $\DZprojection{n}{n+1}$ in \autoref{def:dzprojection}, the last term can be split as follows
		\begin{align}\label{eq:dzprojection:split}
			\Expectation{\int_{t_{n}}^{t_{n+1}}\abs{D_{t_{n}}Z_r - D_nZ_{n}^\pi}^2\mathrm{d}r}=\Expectation{\int_{t_{n}}^{t_{n+1}} \abs{D_{t_{n}}Z_r - \DZprojection{n}{n+1}}^2\mathrm{d}r} + \Delta t_n \Expectation{\abs{\DZprojection{n}{n+1}-D_nZ_{n}^\pi}^2}.
		\end{align}
		Plugging this back in \autoref{eq:dzprojection:upperbound:step:minus2} yields
		\begin{align}
			\Delta t_n \Expectation{\abs{\DZprojection{n}{n+1} - D_nZ_{n}^\pi}^2}&\leq \begin{aligned}[t]
				&2d\left(\Expectation{\abs{\Delta D_nY_{n+1}^\pi}^2} -\Expectation{\abs{\CondExpn{n}{\Delta D_nY_{n+1}^\pi}}^2}\right)\\
				&+16dL_{\fD}^2\Delta t_n\begin{aligned}[t]
					\Bigg\{&C\Delta t_n^2 + 2\Delta t_n\left(\Expectation{\abs{\Delta X_{n+1}^\pi}^2} + \Expectation{\abs{\Delta Y_{n+1}^\pi}^2} + \Expectation{\abs{\Delta Z_{n+1}^\pi}^2}\right)\\
					&+2\Delta t_n\left(\Expectation{\abs{\Delta D_nX_{n+1}^\pi}^2}+ \Expectation{\abs{\Delta D_nY_{n+1}^\pi}^2}\right)\\
					&+ \Expectation{\int_{t_{n}}^{t_{n+1}} \abs{D_{t_{n}}Z_r - \DZprojection{n}{n+1}}^2\mathrm{d}r} + \Delta t_n \Expectation{\abs{\DZprojection{n}{n+1}-D_nZ_{n}^\pi}^2}\Bigg\}.
				\end{aligned}
			\end{aligned}
		\end{align}
		For sufficiently small time steps satisfying $16dL_{\fD}^2\Delta t_n\leq 1/2$, we can therefore gather the estimate
		\begin{align}\label{error:disc:dzprojection}
			\Delta t_n \Expectation{\abs{\DZprojection{n}{n+1} - D_nZ_{n}^\pi}^2}\leq \begin{aligned}[t]
				&4d \left\{\Expectation{\abs{\Delta D_nY_{n+1}^\pi}^2} - \Expectation{\abs{\CondExpn{n}{\Delta D_nY_{n+1}^\pi}}^2}\right\}\\
				&+32dL_{\fD}^2\Delta t_n\begin{aligned}[t]
					\Bigg\{&C\Delta t_n^2 + 2\Delta t_n\left(\Expectation{\abs{\Delta X_{n+1}^\pi}^2} + \Expectation{\abs{\Delta Y_{n+1}^\pi}^2} + \Expectation{\abs{\Delta Z_{n+1}^\pi}^2}\right)\\
					&+2\Delta t_n\left(\Expectation{\abs{\Delta D_nX_{n+1}^\pi}^2}+ \Expectation{\abs{\Delta D_nY_{n+1}^\pi}^2}\right)\\
					&+ \Expectation{\int_{t_{n}}^{t_{n+1}} \abs{D_{t_{n}}Z_r - \DZprojection{n}{n+1}}^2\mathrm{d}r}\Bigg\}.
				\end{aligned}
			\end{aligned}
		\end{align}
		
		\item \textit{Estimate for $Z$}. With the above result in hand, we give an estimate for the control process. Under \autoref{ass:error_analysis}, provided by \autoref{thm:malliavin:bsde}, we identify the control process $Z$ by its continuous modification given by $DY$ and establish pointwise estimates. Indeed, from the dynamics of $D_{t_{n}}Y$ given by \autoref{eq:fbsde_fbsde:malliavin_bsde} and the definition of the discrete scheme in \autoref{scheme:osm:z}, it follows
		\begin{align}
			\Delta Z_n^\pi &= \begin{multlined}[t]
				\CondExpn{n}{\Delta D_nY_{n+1}^\pi} + \CondExpn{n}{\int_{t_{n}}^{t_{n+1}} \fD(r, \X{r}, \DX{t_{n}}{r}) - \fD(t_{n+1}, \X{n+1}^\pi, \DX{n}{n+1, n}^\pi)\mathrm{d}r}.
			\end{multlined}
		\end{align}
		Applying the Young-inequality of the form $(a+b)^2\leq (1+\rho\Delta t_n)a^2+(1+\frac{1}{\rho\Delta t_n})b^2$ with any $\rho>0$; using the Jensen- and $L^2([0, T]; \mathds{R}^d)$ Cauchy-Schwarz inequalities gives
		\begin{align}
			\Expectation{\abs{\Delta Z_{n}^\pi}^2}\leq \begin{aligned}[t]
				&(1+\rho\Delta t_n)\Expectation{\abs{\CondExpn{n}{\Delta D_nY_{n+1}^\pi}}^2}\\
				&+\frac{1}{\rho}(1+\rho\Delta t_n)\Expectation{\int_{t_{n}}^{t_{n+1}} \abs{\fD(r, \X{r}, \DX{t_{n}}{r}) - \fD(t_{n+1}, \X{n+1}^\pi, \DX{n}{n+1, n}^\pi)}^2\mathrm{d}r}.
			\end{aligned}
		\end{align}
		Exploiting the Lipschitz- and Hölder continuity of $\fD$ in \autoref{eq:lipschitz:f_fD} and using the mean-squared continuities of $X, Y, Z, D_{t_{n}}X$ and $D_{t_{n}}Y$ in \autoref{eq:error_analysis:mean-squared_continuities}, we subsequently gather
		\begin{align}\label{eq:error:z:disc:lipschitz}
			\Expectation{\abs{\Delta Z_{n}^\pi}^2}\leq \begin{multlined}[t]
				(1+\rho\Delta t_n)\Expectation{\abs{\CondExpn{n}{\Delta D_nY_{n+1}^\pi}}^2}\\
				+ \frac{8L_{\fD}^2}{\rho}(1+\rho\Delta t_n)\begin{aligned}[t]
					\Bigg\{&C\Delta t_n^2 + 2\Delta t_n \left(\Expectation{\abs{\Delta X_{n+1}^\pi}^2} + \Expectation{\abs{\Delta Y_{n+1}^\pi}^2} + \Expectation{\abs{\Delta Z_{n+1}^\pi}^2}\right)\\
					&+ 2\Delta t_n \left(\Expectation{\abs{\Delta D_nX_{n+1}^\pi}^2} + \Expectation{\abs{\Delta D_nY_{n+1}^\pi}^2}\right)\\
					&+\Expectation{\int_{t_{n}}^{t_{n+1}} \abs{D_{t_{n}}Z_r - D_nZ_{n}^\pi}^2\mathrm{d}r}\Bigg\}.
				\end{aligned}
			\end{multlined}
		\end{align}
		Splitting the last term according to \autoref{eq:dzprojection:split}, substituting the upper bound \autoref{error:disc:dzprojection} and choosing $\rho^*\coloneqq 32L_{\fD}^2$ then yields
		\begin{align}\label{eq:error_analysis:z:step_minus_2}
			\Expectation{\abs{\Delta Z_{n}^\pi}^2}\leq \begin{aligned}[t]
				&(1+\rho^*\Delta t_n)\Expectation{\abs{\CondExpn{n}{\Delta D_nY_{n+1}^\pi}}^2}\\
				&+ \frac{1+\rho^*\Delta t_n}{2}\begin{aligned}[t]
					\Bigg\{&C\Delta t_n^2+ (1 + 16dL_{\fD}^2)\Delta t_n \left(\Expectation{\abs{\Delta X_{n+1}^\pi}^2} + \Expectation{\abs{\Delta Y_{n+1}^\pi}^2} + \Expectation{\abs{\Delta Z_{n+1}^\pi}^2}\right)\\
					&+ (1 + 16dL_{\fD}^2) 	 	\Delta t_n \left(\Expectation{\abs{\Delta D_nX_{n+1}^\pi}^2} + \Expectation{\abs{\Delta D_nY_{n+1}^\pi}^2}\right)\\
					&+(1 + 16dL_{\fD}^2) \Expectation{\int_{t_{n}}^{t_{n+1}} \abs{D_{t_{n}}Z_r - \DZprojection{n}{n+1}}^2\mathrm{d}r}\Bigg\},
				\end{aligned}
			\end{aligned}
		\end{align}
		for any sufficiently small $\Delta t_n<1$.
		At this point, we can make use of the fact that due to \ref{ass:error_analysis:sde:abm} in \autoref{ass:error_analysis} $X_{n}^\pi= \sigma W_{t_{n}}= X_{t_{n}}$ and $D_nX_{n+1}^\pi= \sigma\equiv D_{t_{n}}X_{t_{n+1}}$, which in particular implies $X_{t_{n}} - X_n^\pi\equiv 0, D_{t_{n}}X_{t_{n+1}} - D_nX_{n+1}^\pi \equiv 0$ and
		\begin{align}\label{eq:error_analysis:z:analyticalX}
			\Delta D_nY_{n+1}^\pi &=\Delta Z_{n+1}^\pi,\qquad 	D_{t_{n}}Z_{t_{n}} - D_nZ_{n}^\pi = \Delta \Gamma_{n}^\pi\sigma,
		\end{align}
		in light of \autoref{eq:sec:error_analysis:malliavin_chain_rule}.
		Plugging these estimates back in \autoref{eq:error_analysis:z:step_minus_2} subsequently gives
		\begin{align}\label{eq:error_analysis:estimate:z}
			\Expectation{\abs{\Delta Z_{n}^\pi}^2}\leq \begin{aligned}[t]
				&(1+C_z\Delta t_n)\Expectation{\abs{\Delta Z_{n+1}^\pi}^2}\\
				&+ C_z\left\{\Delta t_n^2+ \Delta t_n\Expectation{\abs{\Delta Y_{n+1}^\pi}^2}+\Expectation{\int_{t_{n}}^{t_{n+1}} \abs{D_{t_{n}}Z_r - \DZprojection{n}{n+1}}^2\mathrm{d}r}\right\}.
			\end{aligned}
		\end{align}
		\item \textit{Estimate for $Y$}. Given $f$'s Lipschitz continuity in $(x, y, z)$ and $1/2$-Hölder continuity in $t$ by \autoref{eq:lipschitz:f_fD}, the mean-squared continuities of $X, Y$ and $Z$ in \autoref{eq:error_analysis:mean-squared_continuities}; through subsequent applications of the Young-, Jensen- and Cauchy-Schwarz inequalities analogously to the previous steps, we derive the following inequality from the dynamics of $Y$ in \autoref{eq:fbsde_fbsde:bsde} and the discrete scheme in \autoref{scheme:osm:y}
		\begin{align}\label{eq:error_analysis:estimate:y}
			\Expectation{\abs{\Delta Y_n^\pi}^2}\leq \begin{aligned}[t]
				&(1+\beta \Delta t_n)\Expectation{\abs{\Delta Y_{n+1}^\pi}^2}\\ 
				&+ \frac{8L_f^2}{\beta}(1+\beta \Delta t_n)\begin{aligned}[t]
					\Big\{&C\Delta t_n^2 + \vartheta_y^2\Delta t_n\left(\Expectation{\abs{\Delta Y_n^\pi}^2} + \Expectation{\abs{\Delta Z_n^\pi}^2}\right) \\ &+ (1-\vartheta_y)^2\Delta t_n\left(\Expectation{\abs{\Delta Y_{n+1}^\pi}^2} + \Expectation{\abs{\Delta Z_{n+1}^\pi}^2}\right)\Big\},
				\end{aligned}
			\end{aligned}
		\end{align}
		with any $\beta>0$.
		
		\item \textit{Combined estimate for $Y$ and $Z$}. Combining the estimates in \autoref{eq:error_analysis:estimate:z} and \autoref{eq:error_analysis:estimate:y} gives
		\begin{align}
			\left(1 - \frac{8L_f^2(1+\beta)\vartheta_y^2}{\beta}\Delta t_n\right)\left(\Expectation{\abs{\Delta Y_n^\pi}^2} + \Expectation{\abs{\Delta Z_n^\pi}^2}\right)\leq\begin{aligned}[t]
				&(1+C_{y}\Delta t_n)\left(\Expectation{\abs{\Delta Y_{n+1}^\pi}^2} + \Expectation{\abs{\Delta Z_{n+1}^\pi}^2}\right)\\
				&+C\left\{\Delta t_n^2+\Expectation{\int_{t_{n}}^{t_{n+1}} \abs{D_{t_{n}}Z_r - \DZprojection{n}{n+1}}^2\mathrm{d}r}\right\},
			\end{aligned}
		\end{align}
		with $C_y=\beta + \frac{8L_f^2(1+\beta)}{\beta}(1-\vartheta_y)^2+C_z$. Then, for any given $\beta>0$ and sufficiently small time step admitting to $\frac{8L_f^2(1+\beta)\vartheta_y^2}{\beta}\Delta t_n< 1$, we derive
		\begin{align}\label{eq:error_analysis:estimate:precombined}
			\Expectation{\abs{\Delta Y_n^\pi}^2} + \Expectation{\abs{\Delta Z_n^\pi}^2}\leq\begin{aligned}[t]
				&(1+C\Delta t_n)\left(\Expectation{\abs{\Delta Y_{n+1}^\pi}^2} + \Expectation{\abs{\Delta Z_{n+1}^\pi}^2}\right)\\
				&+C\left\{\Delta t_n^2+\Expectation{\int_{t_{n}}^{t_{n+1}} \abs{D_{t_{n}}Z_r - \DZprojection{n}{n+1}}^2\mathrm{d}r}\right\}.
			\end{aligned}
		\end{align}
		Thereupon, the discrete Grönwall lemma implies that
		\begin{align}\label{eq:error_analysis:estimate:y_and_z_combined}
			\max_{0\leq n\leq N}\Expectation{\abs{\Delta Y_n^\pi}^2} + \max_{0\leq n\leq N}\Expectation{\abs{\Delta Z_n^\pi}^2}\leq\begin{aligned}[t]
				C\Bigg\{&\Expectation{\abs{g(X_T) - g(X_{N}^\pi)}^2}\\
				&+ \Expectation{\abs{\nabla_x g(X_T)\sigma(t_N, X_T) - \nabla_x g(X_{t_{N}}^\pi)\sigma(t_N, X_N^\pi)}^2}\\
				& + \varepsilon^{DZ}(\abs{\pi}) + \abs{\pi}\Bigg\},
			\end{aligned} 
		\end{align}
		where we also used the definition in \autoref{def:dz:l2_regularity}.
		The proclaimed estimate for the $(Y, Z)$ part then follows from the observation that under \autoref{ass:error_analysis} the terminal conditions of both the BSDE in \autoref{eq:fbsde_fbsde:bsde} and the Malliavin BSDE in \autoref{eq:fbsde_fbsde:malliavin_bsde} are analytically observed; and the fact that, according to \autoref{eq:error_analyis:epsilon^DZ}, $\varepsilon^{DZ}(\abs{\pi})$ is also $\mathcal{O}(\abs{\pi})$.
		
		\item \textit{Final estimate for $\Gamma$}. 
		It remains to show the consistency of the $\Gamma$ estimates. From \autoref{error:disc:dzprojection} and \autoref{eq:dzprojection:split}, we get
		\begin{align}
			\Expectation{\int_{t_{n}}^{t_{n+1}} \abs{D_{t_{n}}Z_r - D_nZ_{n}^\pi}^2\mathrm{d}r}\leq  \begin{aligned}[t]
				&\Expectation{\int_{t_{n}}^{t_{n+1}} \abs{D_{t_{n}}Z_r - \DZprojection{n}{n+1}}^2\mathrm{d}r}\\
				&+ 4d \left\{\Expectation{\abs{\Delta D_nY_{n+1}^\pi}^2} - \Expectation{\abs{\CondExpn{n}{\Delta D_nY_{n+1}^\pi}}^2}\right\}\\
				&+32dL_{\fD}^2\Delta t_n\begin{aligned}[t]
					\Bigg\{&C\Delta t_n^2+ 2\Delta t_n \left(\Expectation{\abs{\Delta X_{n+1}^\pi}^2}+\Expectation{\abs{\Delta Y_{n+1}^\pi}^2} + \Expectation{\abs{\Delta Z_{n+1}^\pi}^2}\right)\\
					&+2\Delta t_n\left(\Expectation{\abs{\Delta D_nX_{n+1}^\pi}^2} + \Expectation{\abs{\Delta D_nY_{n+1}^\pi}^2}\right)\\
					&+\Expectation{\int_{t_{n}}^{t_{n+1}} \abs{D_{t_{n}}Z_r - \DZprojection{n}{n+1}}^2\mathrm{d}r}\Bigg\}.
				\end{aligned}
			\end{aligned}
		\end{align}
		Summation from $n=0, \dots, N-1$ thus gives
		\begin{align}\label{eq:error:gamma:disc:step:minus2}
			\Expectation{\sum_{n=0}^{N-1}\int_{t_{n}}^{t_{n+1}} \abs{D_{t_{n}}Z_r - D_nZ_{n}^\pi}^2\mathrm{d}r}\leq  \begin{aligned}[t]
				&\Expectation{\sum_{n=0}^{N-1}\int_{t_{n}}^{t_{n+1}} \abs{D_{t_{n}}Z_r - \DZprojection{n}{n+1}}^2\mathrm{d}r}
				+ 4d\Expectation{\abs{\Delta D_{N-1}Y_N^\pi}^2}\\ 
				&+ 4d\sum_{n=1}^{N-1}\left\{\Expectation{\abs{\Delta D_{n-1}Y_{n}^\pi}^2} - \Expectation{\abs{\CondExpn{n}{\Delta D_nY_{n+1}^\pi}}^2}\right\}\\
				&+32dL_{\fD}^2\sum_{n=0}^{N-1}\Delta t_n\begin{aligned}[t]
					\Bigg\{&C\Delta t_n^2 + 2\Delta t_n \left(\Expectation{\abs{\Delta X_{n+1}^\pi}^2} + \Expectation{\abs{\Delta Y_{n+1}^\pi}^2}\right)\\
					& + 2\Delta t_n\left(\Expectation{\abs{\Delta Z_{n+1}^\pi}^2} + \Expectation{\abs{\Delta D_nX_{n+1}^\pi}^2}\right)\\
					&+2\Delta t_n\left(\Expectation{\abs{\Delta D_nX_{n+1}^\pi}^2} + \Expectation{\abs{\Delta D_nY_{n+1}^\pi}^2}\right)\\
					&+\Expectation{\int_{t_{n}}^{t_{n+1}} \abs{D_{t_{n}}Z_r - \DZprojection{n}{n+1}}^2\mathrm{d}r}\Bigg\},
				\end{aligned}
			\end{aligned}
		\end{align}
		where we changed the summation index for the first part of the third term.
		Using the relations in \autoref{eq:error_analysis:z:analyticalX} implied by \autoref{ass:error_analysis}, we can upper bound the summation term by the upper bound \autoref{eq:error_analysis:z:step_minus_2}
		\begin{align}
			\Expectation{\abs{\Delta D_{n-1}Y_{n}^\pi}^2} - \Expectation{\abs{\CondExpn{n}{\Delta D_nY_{n+1}^\pi}}^2}\leq \begin{aligned}[t]
				&C\Delta t_n \Expectation{\abs{\Delta Z_{n+1}^\pi}^2}+ C\Delta t_n\Expectation{\abs{\Delta Y_{n+1}^\pi}^2}\\ & + C\Delta t_n^2 + C\Expectation{\int_{t_{n}}^{t_{n+1}} \abs{D_{t_{n}}Z_r - \DZprojection{n}{n+1}}^2}.
			\end{aligned}
		\end{align}
		Substituting this back into \autoref{eq:error:gamma:disc:step:minus2}, the convergence of the $\mathds{L}^2$-regularity of $DZ$ in \autoref{eq:error_analyis:epsilon^DZ}, and the estimate \autoref{eq:error_analysis:estimate:y_and_z_combined} proven in the previous step show the proclaimed convergence of the $\Gamma$ estimates.
		
		This concludes the proof.
	\end{steps}
\end{proof}

The final result in \autoref{eq:error_analysis:convergence_rate} expresses that the $\mathds{L}^2$ convergence rate of the discrete time approximations induced by \autoref{scheme:osm} is of order $\mathcal{O}(\abs{\pi}^{1/2})$ under the conditions imposed in \autoref{ass:error_analysis}. Comparing the convergence bound of \autoref{thm:osm:discretization} to that of the classical backward Euler discretization in \autoref{eq:euler_bsde:error_estimate}, three observations need to be made. First, in contrast to the backward Euler discretization, the OSM scheme admits to a bound where the $Z$ process is controlled by the maximum error over the discrete time steps -- see \autoref{eq:osm:disc:total_error}. This is due to the fact that under the OSM formulation, \autoref{thm:malliavin:bsde} guarantees a continuous version of the control process bounded in the supremum norm, and thus allows for pointwise estimates. 
Additionally, we see that even though the hereby proposed discretization solves a \emph{larger problem} by incorporating $\Gamma$ estimates, it exhibits the same, optimal rate of convergence well-known for the classical backward Euler discretization of BSDEs in \autoref{eq:euler_bsde:error_estimate}. At last, unlike in the aforementioned case, our final estimate does not include the strong discretization errors of the terminal conditions of the BSDEs \autoref{eq:fbsde_fbsde:bsde} and \autoref{eq:fbsde_fbsde:malliavin_bsde}. This is merely due to the fact that under \autoref{ass:error_analysis} we assumed constant diffusion coefficients,  which led to the corresponding terms canceling in \autoref{eq:error_analysis:estimate:y_and_z_combined}. Similarly, we exploited that under our conditions the Malliavin BSDE's terminal condition is Lipschitz continuous, leading to an $\mathcal{O}(\abs{\pi}^{1/2})$ convergence of the $\mathds{L}^2$-regularity of $DZ$ according to \autoref{eq:error_analyis:epsilon^DZ}. In case of irregular terminal conditions and non-analytical forward diffusions, it is expected that the corresponding terms would also contribute to the final estimate. 

\subsection{Assumptions revisited}\label{subsection:error_analysis:assumptions_revisited}

In order to conclude the discussion on the discrete time approximation errors, we elaborate on the conditions set in \autoref{ass:error_analysis}. Key aspects of their relevance are highlighted and potential ways to generalize the results are pointed out in order to encourage further research.

Not surprisingly, compared to classical discretizations excluding the Malliavin components, necessarily stricter conditions need to be posed in order to ensure Malliavin differentiability of the original FBSDE system in \autoref{eq:fbsde_fbsde:sde} -- \autoref{eq:fbsde_fbsde:bsde}.
The differentiability requirements on the coefficients $f$ and $g$ in \ref{ass:error_analysis:bsde:g}--\ref{ass:error_analysis:bsde:f} are inherently linked to the Malliavin differentiability of the FBSDE in \autoref{eq:fbsde_fbsde}. However, the Malliavin differentiability of the solution pair holds under significantly milder assumptions. We refer to \cite{mastrolia_malliavin_2017} for a recent account on the subject, where it is shown that first-order continuous differentiability, with not necessarily bounded $\nabla_x g, \nabla_x f$ is sufficient.

The reason why we nonetheless decided to restrict the assumptions to second-order bounded differentiability is mostly related to \autoref{lemma:pde_Cb_solution} and the Lipschitz continuity of $\fD$ in \autoref{eq:lipschitz:f_fD}. Although the Lipschitz continuity of $\nabla_x f, \nabla_y f, \nabla_z f$ are all guaranteed by the $C^{0, 2, 2, 2}_b$ assumption, the same cannot be said about the Malliavin derivative arguments $\DX{s}{t}$ of $\fD$. More precisely, in order to have Lipschitz continuity in all spatial arguments, one -- on top of the boundedness of the partial derivatives of $f$ -- also needs to have the uniform boundedness of all the Malliavin derivatives $(DX, DY, DZ)$. Due to the Malliavin chain rule estimates in \autoref{eq:malliavin_chain_rule_PLUS_feynman_kac}, under the assumption of constant diffusion coefficients in \ref{ass:error_analysis:sde:abm}, the uniform boundedness of the Malliavin derivatives is implied by the twice bounded differentiability of the solution of the parabolic problem in \autoref{eq:parabolic_pde}. This is guaranteed by \autoref{lemma:pde_Cb_solution}, requiring the conditions in \ref{ass:error_analysis:bsde:g}--\ref{ass:error_analysis:bsde:f} to be satisfied. 
In case the uniform boundedness of $(DY, DZ)$ is not readily available, one can truncate the corresponding arguments of $\fD$ similarly to \cite{chassagneux_numerical_2016}, and discretize the truncated Malliavin problem accordingly. Thereafter, the total discrete time approximation error can be decomposed into a truncation and discretization component, which guarantee convergence for an appropriately chosen, adaptive truncation range. A detailed presentation of this argument will be part of our future research.

Throughout the analysis, we also often relied on the assumption that the underlying forward diffusion admits to constant drift and diffusion coefficients due to \ref{ass:error_analysis:sde:abm}. In particular, this assumption allowed us to neglect the contribution of error terms such as $\Expectation{\abs{X_{t_{n}} - X_n^\pi}^2}$ and $\Expectation{\abs{D_{t_{n}}X_{t_{n+1}} - D_nX_{n+1}^\pi}^2}$ -- see, e.g., \autoref{eq:error_analysis:z:analyticalX}. However, it is well-known that the strong convergence of Euler-Maruyama approximations is of order $1/2$ -- see \autoref{eq:euler:convergence_rate} --, carrying the same order of convergence as the rest of the terms in our estimates. The convergence of the Malliavin derivative $D_nX_{n+1}^\pi$ with respect to an Euler-Maruyama discretization in \autoref{scheme:sde:dx:euler} is more troublesome. In fact, as highlighted by related works in the literature -- see \cite[Remark 5.1]{hu_malliavin_2011} --, it is difficult to guarantee the convergence of $D_sX^\pi$ over the whole time horizon. It is important to highlight that the OSM scheme in \autoref{scheme:osm} does not require approximations of the corresponding Malliavin derivative over the whole time window but only in between adjacent time steps $D_nX_{n+1}^\pi$. This is a major relieve in terms of convergence as one can easily show that within this one time stepping (OSM) scheme, $D_nX_{n+1}^\pi$ inherits the convergence properties of the forward diffusion under mild assumptions -- see \autoref{appendix:discretization:dx}.

The main difficulty with respect to general forward diffusions is related to the Malliavin chain rule approximations given by \autoref{eq:malliavin_chain_rule_PLUS_feynman_kac}. In fact, when $D_nX_{n+1}^\pi\neq D_{t_{n}}X_{t_{n+1}}$ one needs to deal with product terms such as
\begin{align}
	D_{t_{n}}Y_{t_{n+1}}-D_nY_{n+1}^\pi = \begin{aligned}[t]
		&\left[Z_{t_{n+1}}\sigma^{-1}(t_{n+1}, X_{t_{n+1}}) - Z_{n+1}^\pi\sigma^{-1}(t_{n+1}, X_{n+1}^\pi)\right] D_{t_{n}}X_{t_{n+1}}\\
		&+ Z_{n+1}^\pi\sigma^{-1}(t_{n+1}, X_{n+1}^\pi)\left[D_{t_{n}}X_{t_{n+1}}-D_nX_{n+1}^\pi\right].
	\end{aligned}
\end{align}
These pose a significant amount of difficulty when one -- unlike in the case of \ref{ass:error_analysis:sde:abm} -- does not have the uniform boundedness of $\sigma^{-1}$ and $\{D_sX_t\}_{0\leq s, t\leq T}$. Additionally, in order to ensure the boundedness of the discrete estimates $Z_{n+1}^\pi$, a certain truncation procedure would be required, further complicating the analysis. Therefore, we decided to restrict the assumptions to constant diffusion coefficients and to leave the general case for future research.
\begin{remark}[Non-constant drift and Girsanov's theorem]
	We remark that the assumption of a constant drift coefficient is mostly a matter convenience. Indeed, with a straightforward change of measure argument via the Girsanov theorem, one can merge the corresponding non-constant drift contribution onto the driver of the BSDE and -- as long as the drift itself satisfies the continuously bounded differentiable assumptions posed on $\nabla_x f$ -- the same analysis holds.
\end{remark}

\section{Fully implementable schemes with differentiable function approximators and neural networks}\label{sec:regression}
Having established a convergence result for the discrete time approximation's error induced by \autoref{scheme:osm}, we now turn to fully-implementable schemes where the appearing conditional expectations are numerically approximated by a certain machinery. In other words, we are concerned with the following modification of the discrete scheme in \autoref{scheme:osm}
\begin{subequations}\label{scheme:osm:implementable}
	\begin{align}
		\widehat{Y}_N^\pi &= g(X_N^\pi),\quad \widehat{Z}_N^\pi =\nabla_x g(X_N^\pi) \sigma(t_N, X_N^\pi),\quad \widehat{\Gamma}_N^\pi=[\nabla_x (\nabla_x g\sigma)](t_N, X_N^\pi),\\
		\widecheck{\Gamma}_n^\pi \sigma(t_n, X_n^\pi)=D_n\widecheck{Z}_n^\pi &= \frac{1}{\Delta t_n}\left(\CondExpn{n}{\Delta W_n \left(D_n\widehat{Y}_{n+1}^\pi  + \Delta t_n f^D(t_{n+1}, \Xhat{n+1}^\pi, \DXcheck{n}{n+1, n}^\pi)\right)}\right)^T,\quad 
		&&\widehat{\Gamma}_n^\pi\gets \mathcal{P}(\widecheck{\Gamma}_n^\pi),\label{scheme:osm:implementable:dz}\\
		\widecheck{Z}_n^\pi &= \CondExpn{n}{D_n\widehat{Y}_{n+1}^\pi + \Delta t_n f^D(t_{n+1}, \Xhat{n+1}^\pi, \DXhat{n}{n+1, n}^\pi)},\quad 
		&&\widehat{Z}_n^\pi\gets \mathcal{P}(\widecheck{Z}_n^\pi),\label{scheme:osm:implementable:z}\\
		\widecheck{Y}_n^\pi &= \vartheta_y \Delta t_n f(t_n, X_n^\pi, \widecheck{Y}_n^\pi, \widehat{Z}_n^\pi) + \CondExpn{n}{\widehat{Y}_{n+1}^\pi + (1-\vartheta_y)\Delta t_n f(t_{n+1}, \Xhat{n+1}^\pi)},\quad 
		&&\widehat{Y}_n^\pi\gets \mathcal{P}(\widecheck{Y}_n^\pi),\label{scheme:osm:implementable:y}
	\end{align}
\end{subequations}
with $\Xhat{n+1}^\pi\coloneqq \left(\widehat{X}_{n+1}^\pi, \widehat{Y}_{n+1}^\pi, \widehat{Z}_{n+1}^\pi\right)$, $\DXcheck{n}{n+1, n}^\pi\coloneqq (D_nX_{n+1}^\pi, D_n\widehat{Y}_{n+1}^\pi, D_n\widecheck{Z}_n^\pi)$ and $\DXhat{n}{n+1, n}^\pi\coloneqq \left(D_nX_{n+1}^\pi, D_n\widehat{Y}_{n+1}^\pi, D_n\widehat{Z}_{n}^\pi\right)$, where $D_n\widehat{Y}_{n+1}^\pi\coloneqq \widehat{Z}_{n+1}^\pi\sigma^{-1}(t_{n+1}, X_{n+1}^\pi)D_nX_{n+1}^\pi$ and $D_n\widehat{Z}_n^\pi\coloneqq \widehat{\Gamma}_n^\pi D_nX_{n+1}^\pi$ -- similarly as in \autoref{eq:discrete_estimates:DY_DZ}. The final approximations are denoted by $(\widehat{Y}_n^\pi, \widehat{Z}_n^\pi, \widehat{\Gamma}_n^\pi)$ and $\mathcal{P}$ denotes a \emph{machinery} which, given approximations at future time steps, estimates the \emph{true} conditional expectations $(\widecheck{Y}_n^\pi, \widecheck{Z}_n^\pi, \widecheck{\Gamma}_n^\pi)$. It is worth to notice that \autoref{scheme:osm:implementable:z} is explicit, whereas \autoref{scheme:osm:implementable:dz} and \autoref{scheme:osm:implementable:y} are both implicit when $\vartheta_y>0$. Due to the Markov feature of the corresponding problem, we can write all estimates as deterministic functions of the state process $\widecheck{Y}^\pi_n\eqqcolon \widecheck{y}^\pi_n(X_n^\pi)$, $\widecheck{Z}_n^\pi\eqqcolon \widecheck{z}_n^\pi(X_n^\pi)$, $\widecheck{\Gamma}_n^\pi\eqqcolon \widecheck{\gamma}^\pi_n(X_n^\pi)$ and $\widehat{Y}^\pi_n\eqqcolon \widehat{y}^\pi_n(X_n^\pi)$, $\widehat{Z}_n^\pi\eqqcolon \widehat{z}_n^\pi(X_n^\pi)$, $\widehat{\Gamma}_n^\pi\eqqcolon \widehat{\gamma}^\pi_n(X_n^\pi)$ at each time instance.

In the literature there exist several techniques to numerically approximate conditional expectations, see, e.g., \cite{bally_quantization_2003, briand_simulation_2014, bouchard_discrete-time_2004}.
In what follows, we investigate two specific approaches in the context of the OSM scheme. We first give an extension to the BCOS method \cite{ruijter_fourier_2015} which shall later be used as a benchmark method for one-dimensional problems. Our main approximation tool is based on a least-squares Monte Carlo formulation similar to those of the Deep BSDE methods \cite{han_solving_2018, hure_deep_2020}, where the functions parametrizing the solution triple are fully-connected, feedforward neural networks. Due to the universal approximation properties of neural networks in Sobolev spaces, this will allow us to distinguish between two variants. In the first one, the $\Gamma$ process is parametrized by a matrix-valued neural network whose parameters are optimized in a stochastic gradient descent iteration. In the second, this parametrization is circumvented and, in light of \autoref{thm:feynman-kac}, the $\Gamma$ estimates are directly calculated as the Jacobian of the $Z$ process. However, such directly linked estimates induce an additional source of error, which shall be addressed in \autoref{thm:osm:regression}, where we give an error bound for the complete approximation error of the fully-implementable OSM scheme, given the cumulative regression errors of neural network regressions, similarly to the ones proven in \cite{han_convergence_2020, hure_deep_2020}.

\subsection{The BCOS method}\label{sec:bcos}
We recall the most fundamental notions of the BCOS method \cite{ruijter_fourier_2015}. In order to keep the presentation concise, for the sake of this section we restrict ourselves to the one-dimensional case. BCOS is an extension of the COS method \cite{fang_novel_2009} to the setting of FBSDE systems, whose main idea is to recover the probability densities of certain random variables given that their characteristic function is available.
The key ideas of the BCOS method can be summarized as follows. In general, for a Markov problem, conditional expectations are of the form
\begin{align}
	I(x)\coloneqq \CondExp{v(t_{n+1}, X_{n+1}^{\pi})}{X_{n}^\pi=x}=\int_\mathds{R} v(t_{n+1}, \rho) p(\rho\vert x)\mathrm{d}x,
\end{align}
where $p(\rho\vert x)$ is the conditional transition density function from state $(t, x)$ to state $(t_{n+1}, \rho)$. Assuming that the integrand above decays in the infinite limit, one can truncate the integration range to a sufficiently wide finite domain $[a, b]$. Thereafter, the Fourier cosine expansion of the deterministic mapping $v(t_{n+1}, \cdot): [a, b]\to\mathds{R}$ reads as\footnote{We adhere to the standard notation where $\sideset{}{'}\sum_{k=0}^{K-1}a_k\coloneqq a_0/2+\sum_{k=1}^{K-1}a_k$, i.e. where the first element is multiplied by $1/2$.} 
\begin{align}
	v(t_{n+1}, \rho)=\sideset{}{'}\sum_{k=0}^{\infty} \mathcal{V}(t_{n+1})\cos(k\pi\frac{\rho-a}{b-a}),
\end{align}
where the series coefficients are given by $\mathcal{V}(t_{n+1})\coloneqq \frac{2}{b-a}\int_a^b v(t_{n+1}, \rho)\cos(k\pi\frac{\rho-a}{b-a})\mathrm{d}\rho$. Plugging these estimates back in the conditional expectation, with an additional truncation of the Fourier expansion to a finite number of $K$ coefficients, gives the approximation \cite{fang_novel_2009}
\begin{align}\label{eq:bcos:general_estimates:0th}
	I(x)\approx \widehat{I}(x)\coloneqq \sideset{}{'}\sum_{k=0}^{K-1} \mathcal{V}(t_{n+1})\Re{\Phi(k\vert x)},
\end{align}
where $\Phi(k\vert x)\coloneqq \phi(\frac{k\pi}{b-a}\vert x)e^{ik\pi\frac{x-a}{b-a}}$ and 
$\phi(u\vert x)$ is the conditional characteristic function of the Markov transition. In case the underlying Markov process is an  Euler-Maruyama approximation of the solution to a forward SDE, the conditional characteristic function is given by $\phi(u\vert x)= \exp(i u\mu(t_n, x)\Delta t_n - \frac{1}{2}u^2\sigma^2(t_n, x)\Delta t_n)$.
Using an integration by parts argument -- see \cite[Appendix A.1]{ruijter_fourier_2015} and \autoref{appendix:bcos} -- similar results can be constructed for conditional expectations of the forms
\begin{align}
	J(x)&\coloneqq \CondExpnx{n}{v(t_{n+1}, X_{n+1}^\pi)\Delta W_n}\approx \widehat{J}(x)\coloneqq \Delta t_n \sigma(t_n, x)\sideset{}{'}\sum_{k=0}^{K-1}-\frac{k\pi}{b-a} \mathcal{V}(t_{n+1})\Im{\Phi(k\vert x)},\label{eq:bcos:general_estimates:1th}\\
	K(x)&\coloneqq \CondExpnx{n}{v(t_{n+1}, X_{n+1}^\pi)(\Delta W_n)^2}\approx \widehat{K}(x)\coloneqq \begin{aligned}[t]
		&\Delta t_n\sideset{}{'}\sum_{k=0}^{K-1}\mathcal{V}(t_{n+1})\Re{\Phi(k\vert x)}\\ &- \Delta t_n^2 \sigma^2(t_n, x)\sum_{k=0}^{K-1}\left(\frac{k\pi}{b-a}\right)^2 \mathcal{V}(t_{n+1})\Re{\Phi(k\vert x)}.
	\end{aligned}\label{eq:bcos:general_estimates:2th}
\end{align}
Built on these approximations, the BCOS method goes as follows. One first needs to recover the coefficients of the terminal conditions either analytically or via Discrete Cosine Transforms (DCT). These coefficients are plugged into conditional expectations of the form \autoref{eq:bcos:general_estimates:0th}, \autoref{eq:bcos:general_estimates:1th} and \autoref{eq:bcos:general_estimates:2th}, providing estimates for the solutions at $t_{N-1}$. In order to make the scheme fully-implementable, one also relies on a machinery which recovers these coefficients while going to time step $n$, from time step $n+1$ in a backward recursive algorithm. This step can either be done by Fast Fourier Transforms (FFT) \cite{ruijter_fourier_2015} when the coefficients of the SDE are constant, or with DCT when they are not \cite{ruijter_numerical_2016}. When one is faced with an implicit conditional expectation ($\vartheta_y>0$) Picard iterations are performed, which -- under Lipschitz assumptions and sufficiently small time steps -- converge exponentially fast to the unique fixed point solution.

In particular, the BCOS approximations for \autoref{scheme:osm:implementable} read as follows -- for a more detailed derivation, see \autoref{appendix:bcos}
\begin{subequations}\label{bcos:osm}
	\begin{align}
		\widehat{y}_N^\pi(x)&=g(x),\quad \widehat{z}_N^\pi(x)=\partial_x g(x)\sigma(T, x),\quad \widehat{\gamma}_N^\pi(x)=\partial_x\left(\partial_x g\sigma\right)(T, x),\\
		\widehat{\gamma}^\pi_n(x)\sigma(t_n, x) &= \sideset{}{'}\sum_{k=0}^{K-1} \widehat{\mathcal{DZ}}_k(t_{n+1})\cos(k\pi\frac{x-a}{b-a}),\\
		\widehat{z}^\pi_n(x) &= \begin{aligned}[t]
			&\sigma(t_n, x)(1+\partial_x\mu(t_n, x)\Delta t_n)\sideset{}{'}\sum_{k=0}^{K-1}\widehat{\mathcal{W}}_{k}(t_{n+1})\Re{\Phi(k\vert x)}\\
			&-\sigma^2(t_n, x)\partial_x\sigma(t_n, x)\Delta t_n\sideset{}{'}\sum_{k=0}^{K-1} \frac{k\pi}{b-a}\widehat{\mathcal{W}}_k(t_{n+1})\Im{\Phi(k\vert x)}\\
			&+\Delta t_n \widehat{\gamma}^\pi_n(x)\sigma(t_n, x)\sideset{}{'}\sum_{k=0}^{K-1} \widehat{\mathcal{F}}^z_k(t_{n+1})\Re{\Phi(k\vert x)},
		\end{aligned}\\
		\widehat{y}_n^\pi(x) &= \sideset{}{'}\sum_{k=0}^{K-1} \widehat{\mathcal{Y}}_k(t_n)\cos\left(k\pi\frac{x-a}{b-a}\right),
	\end{align}
\end{subequations}
where we defined
\begin{align}\label{eq:w_def}
	\begin{split}
		h_{n+1}^\pi(X_{n+1}^\pi)&\coloneqq \widehat{y}_{n+1}^\pi(X_{n+1}^\pi) + (1-\vartheta_y)\Delta t_nf(t_{n+1}, X_{n+1}^\pi,  \widehat{y}_{n+1}^\pi(X_{n+1}^\pi), \widehat{z}_{n+1}^\pi(X_{n+1}^\pi)),\\
		w_{n+1}^\pi(X_{n+1}^\pi)&\coloneqq 
		\left(1 + \partial_yf(t_{n+1}, \Xhat{n+1}^\pi)\right)\widehat{z}_{n+1}^\pi(X_{n+1}^\pi)\sigma^{-1}(t_{n+1}, X_{n+1}^\pi) + \Delta t_n \partial_x f(t_n, \Xhat{n+1})
	\end{split}
\end{align}
for the explicit parts of the discrete approximations \autoref{scheme:osm:implementable:y} and \autoref{scheme:osm:implementable:z}, respectively. The coefficients 
\begin{align}
	\begin{split}
		\mathcal{W}_k(t_{n+1})&\coloneqq \frac{2}{b-a}\int_a^b w_{n+1}^\pi(\rho)\cos(k\pi\frac{\rho-a}{b-a})\mathrm{d}\rho,\quad \mathcal{H}_k(t_{n+1})\coloneqq \frac{2}{b-a}\int_a^b h_{n+1}^\pi(\rho)\cos(k\pi\frac{\rho-a}{b-a})\mathrm{d}\rho,\quad\\ \mathcal{F}^z_k(t_{n+1})&\coloneqq \frac{2}{b-a}\int_a^b \partial_z f(t_{n+1}, \rho)\cos(k\pi\frac{\rho-a}{b-a})\mathrm{d}\rho
	\end{split}
\end{align}
are approximated by their DCT counterparts $\widehat{\mathcal{W}}_k(t_{n+1})$, $\widehat{\mathcal{H}}_k(t_{n+1})$ and $\widehat{\mathcal{F}}^z_k(t_{n+1})$, respectively.
$\widehat{\mathcal{DZ}}_k(t_{n+1})$ is recovered with DCT on the approximations $\CondExpnx{n}{\Delta t_n^{-1}\Delta W_n w_{n+1}^\pi(X_{n+1}^\pi) D_nX_{n+1}^\pi} / \left(1 -\CondExpnx{n}{\Delta W_n\partial_z f(t_{n+1}, \Xhat{n+1}^\pi)}\right)$.
Thereafter, the BCOS formulas in \autoref{eq:bcos:general_estimates:0th}, \autoref{eq:bcos:general_estimates:1th} and \autoref{eq:bcos:general_estimates:2th}, together with the Euler-Maruyama estimates \autoref{scheme:sde:dx:euler}, imply the estimates for $\Gamma$ and $Z$. The $Z$ estimates are plugged into the approximation of the $Y$ process in \autoref{scheme:osm:implementable:y}. The coefficients $\widehat{\mathcal{Y}}_k(t_{n})$ are recovered from the estimates $y^{P, \pi}_{n}(x)=\vartheta_y\Delta t_n f(t_n, x, y_n^{P-1, \pi}(x), \widehat{z}^\pi_n(x))+ \CondExpnx{n}{h_{n+1}^\pi}$ after a sufficient number of $P$ Picard iterations are taken. This completes the BCOS algorithm for the OSM scheme.

For a detailed account on the contributions of the corresponding truncation and approximation errors of the BCOS method we refer to \cite{ruijter_fourier_2015, ruijter_numerical_2016, fang_novel_2009} and the references therein.
Although the method can be extended to higher-dimensional diffusion processes, it suffers from the curse of dimensionality through the inevitable spatial discretization required in the Fourier frequency domain.
\subsection{Neural networks}

In recent years, neural networks have shown excellent empirical results when deployed in a regression Monte Carlo framework for BSDEs \cite{han_solving_2018, hure_deep_2020, fujii_asymptotic_2019}.
In what follows, we are concerned with the class of feedforward, fully-connected deep neural networks, particularly in the context of approximating high-dimensional conditional expectations. This family of functions $\Psi(\cdot\vert\Theta):\mathds{R}^{d\times 1}\to \mathds{R}^{q\times d}$ can be described as a hierarchical sequence of compositions
\begin{align}
	\Psi(x\vert \Theta)\coloneqq a_{\text{out}}\circ A_{L+1}(\cdot\vert\theta_{L+1})\circ a\circ A_{L}(\cdot\vert\theta_{L})\circ a\circ \dots\circ a\circ A_1(\cdot\vert\theta_1)\circ x.
\end{align}
The affine transformations $A_l, l=1, \dots, L$ are called \emph{hidden layers} and are of the form $A_l(y\vert\theta^l\coloneqq (W_{l-1}^l, b_l))\coloneqq W_{l-1}^l y + b_l$, with $W_{l-1}^l\in\mathds{R}^{S_l\times S_{l-1}}$ being a matrix of \emph{weights} and $b_l\in\mathds{R}^{S_{l}}$, $S_{l-1}, S_l\in \mathds{N}$ the \emph{biases}. Furthermore, $a: \mathds{R}\to\mathds{R}$ describes a non-linear \emph{activation} function, which is applied element-wise on the output of each affine transformation. The size $S_l$ denotes how many \emph{neurons} are contained in the given layer. The \emph{output layer} is defined by $A_{L+1}(y\vert\theta_{L+1}\coloneqq (W_{L}^{L+1}, b_{L+1}))\coloneqq W_{L}^{L+1}y + b_{L+1}$ with $W_{L}^{L+1}\in\mathds{R}^{q\times d\times S_{L}}, b_{L+1}\in\mathds{R}^{q\times d}$. The complete parameter space of such an architecture is therefore given by $\Theta\coloneqq \left(\theta_1, \dots, \theta_{L+1}\right)\in \mathds{R}^{q\times d\times (S_L + 1) + \sum_{l=1}^{L} S_{l-1}\times S_{l} + S_l}$. Widely common choices for the non-linearity include: Rectified Linear Units (ReLU), sigmoid and the hyperbolic tangent activations. The optimal parameter space $\Theta^*$ is usually approximated by first formulating a \emph{loss function} which measures an abstract distance from the desired behavior, and then iteratively minimizing this loss through a \emph{stochastic gradient descent} (SGD) type algorithm. For more details, we refer to \cite{goodfellow_deep_2016}.

The use of deep learning is often motivated by the so-called \emph{Universal Approximation Theorems (UAT)} which establish that neural networks can approximate a wide class of functions with arbitrary accuracy. The first version of the UAT property was proven by Cybenko in \cite{cybenko_approximation_1989}. However, as in the applications of this paper derivative approximations play an important role, we present the following extension of Hornik et al. \cite{hornik_approximation_1991}, which extends the UAT property to \emph{Sobolev spaces}.
In what follows, we use the common notations for $W^{k, p}(U)\coloneqq \{f\in L^p(U): \norm{f}_{W^{k, p}}\coloneqq \sum_{\abs{\alpha}\leq k} \int_U \abs{D^\alpha f}^p\mathrm{d}\lambda <\infty\}$ for Sobolev spaces, where $\alpha$ denotes a multi-index, $D^\alpha$ is the differentiation operator in the weak sense and $\lambda$ is the Lebesgue measure. In particular, we use $H^k(U)\coloneqq W^{k, 2}(U)$. Then the UAT in Sobolev spaces can be stated as follows -- for a proof see \cite[Corollary 6]{hornik_approximation_1991}.
\begin{theorem}[Universal Approximation Theorem in Sobolev Spaces, \cite{hornik_approximation_1991}]\label{thm:hornik:uat:sobolev}
	Let $a:\mathds{R}\to\mathds{R}$ be an $\ell$-finite activation function, i.e. $a\in C^1(\mathds{R})$ and $\int_\mathds{R} \abs{D^{\ell} a}<\infty$. Let $U\subseteq \mathds{R}^d$ be a compact subset. Denote the class of single hidden layer neural networks by $\Sigma(a)\coloneqq \{\psi: \mathds{R}^d\to\mathds{R}^q: \psi(x\vert \Theta=(W_0^1, b_1, W_1^2, b_2))=W_1^2a(W_0^1 x + b_1)+b_2, W_0^1\in \mathds{R}^{d\times S_{1}}, b_1\in \mathds{R}^{S_{1}}, W_1^2\in\mathds{R}^{S_1\times q}, b_2\in \mathds{R}^q, S_1\in \mathds{N}\}$. Then $\Sigma(a)$ is dense in $W^{m, p}(U)$ for each $0\leq m\leq \ell$, i.e. for any $\epsilon>0$ and $f\in W^{m, p}$ there exists a $\psi\in \Sigma(a)$ such that $\norm{\psi - f}_{W^{m, p}}<\epsilon$.
	
	In particular, we have that for any $\ell=1$-finite activation $a$, $f\in H^k(U)$ and $\epsilon>0$ there exists a $\psi\in \Sigma(a)$ such that
	\begin{align}
		\int_U \abs{\psi - f}^2\mathrm{d}\lambda + \int_U \abs{\nabla_x \psi - D f}^2\mathrm{d}\lambda < \epsilon.
	\end{align}
\end{theorem}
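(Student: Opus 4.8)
\emph{Proof sketch.} Since this is the statement of \cite[Corollary 6]{hornik_approximation_1991}, I only outline the strategy I would follow. Fix $1\leq p<\infty$, $0\leq m\leq \ell$, a compact set $U\subseteq\mathds{R}^d$ and $f\in W^{m,p}(U)$. The plan is to interpose between $f$ and $\Sigma(a)$ the class of \emph{generalized trigonometric polynomials}, i.e.\ functions $x\mapsto\sum_{j=1}^J c_j\cos(\braket{w_j}{x}+\beta_j)$ with $c_j\in\mathds{R}$, $w_j\in\mathds{R}^d$, $\beta_j\in\mathds{R}$. First I would show these are $W^{m,p}(U)$-dense: using a Sobolev extension operator one extends $f$ to $\widetilde f\in W^{m,p}(\mathds{R}^d)$, multiplies by a smooth cutoff supported in a box $Q\supseteq U$, periodizes over $Q$, and invokes the classical fact that trigonometric polynomials are dense in $W^{m,p}(\mathds{T}_Q)$ (rectangular partial sums for $1<p<\infty$, Fej\'er means for $p=1$, with the same Fourier-multiplier bounds controlling all derivatives up to order $m$); restricting back to $U$ yields, for any $\epsilon>0$, a generalized trigonometric polynomial $\phi$ with $\norm{f-\phi}_{W^{m,p}(U)}<\epsilon/2$. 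When $a=\cos$ this already finishes the proof, because each ridge $\cos(\braket{w}{x}+\beta)$ is a single-neuron element of $\Sigma(\cos)$ and $\Sigma(\cos)$ is closed under finite sums by concatenating hidden units.

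The core of the argument is to reproduce cosines from a general $\ell$-finite activation. It remains to approximate, uniformly on $U$ and together with partial derivatives up to order $m$, each ridge $x\mapsto\cos(\braket{w}{x}+\beta)$ by elements of $\Sigma(a)$. As $\braket{w}{x}$ ranges over a compact interval $[-R,R]$ when $x\in U$, and since $\eta\mapsto\big(x\mapsto\eta(\braket{w}{x}+\beta)\big)$ is continuous from $C^m([-R,R])$ into $W^{m,p}(U)$ by the chain rule, this reduces to the one-dimensional claim that $\operatorname{span}\{t\mapsto a(\lambda t+\gamma):\lambda,\gamma\in\mathds{R}\}$ is dense in $C^m([-R,R])$. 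Here $D^\ell a\in L^1(\mathds{R})$ makes $\widehat{D^\ell a}$ continuous, and $\ell$-finiteness excludes $a$ being a polynomial, so $\widehat{D^\ell a}$, hence $\widehat a$, is nonzero on some open frequency interval $N$. Using $\widehat a\neq 0$ on $N$ one deconvolves: for each $\xi\in N$, $e^{i\xi t}$ is the $C^m([-R,R])$-limit of averages $\int a(t-s)\,\mathrm{d}\nu_\delta(s)$ with finite signed measures $\nu_\delta$ whose Fourier transforms localize near the point where $1/\widehat a$ is evaluated; discretizing the integral gives finite linear combinations of translates of $a$, and a dilation $t\mapsto\lambda t$ moves $N$ to cover any bounded band around the origin. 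This recovers all trigonometric monomials on $[-R,R]$, whose span is $C^m([-R,R])$-dense by the torus argument of the first paragraph.

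Finally I would assemble the pieces: given $\epsilon>0$, pick $\phi=\sum_{j=1}^J c_j\cos(\braket{w_j}{\cdot}+\beta_j)$ with $\norm{f-\phi}_{W^{m,p}(U)}<\epsilon/2$; approximate each ridge and its derivatives by $\psi_j\in\Sigma(a)$ with $\abs{c_j}\,\norm{\cos(\braket{w_j}{\cdot}+\beta_j)-\psi_j}_{W^{m,p}(U)}<\epsilon/(2J)$; then $\psi\coloneqq\sum_{j=1}^J c_j\psi_j\in\Sigma(a)$ (the widths add) satisfies $\norm{f-\psi}_{W^{m,p}(U)}<\epsilon$, and specializing to $m=1$, $p=2$, $\ell=1$ yields the displayed $H^1$ estimate. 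The main obstacle is the second paragraph: verifying that $\ell$-finiteness genuinely rules out the degenerate polynomial case (so that $\widehat{D^\ell a}$ does not vanish identically) and, more importantly, controlling the deconvolution in the $C^m$ norm rather than merely in $C^0$ when the averaging integral over translates of $a$ is discretized into a finite neural-network sum.
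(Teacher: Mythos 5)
The paper does not prove this theorem at all: it is quoted (up to notation) from Hornik (1991), with the proof explicitly deferred to \cite[Corollary 6]{hornik_approximation_1991}, so there is no internal argument to compare yours against. Your sketch is a reconstruction of the standard Fourier-analytic proof behind the cited result, and as an outline it is essentially sound and consistent with that source's method: density of trigonometric polynomials in $W^{m,p}(U)$ via extension, cutoff and periodization; reduction of each ridge $x\mapsto\cos(\braket{w}{x}+\beta)$ to a one-dimensional $C^m([-R,R])$ approximation problem (legitimate, since composition with the affine map $x\mapsto\braket{w}{x}+\beta$ is continuous from $C^m([-R,R])$ into $W^{m,p}(U)$ on the compact set $U$, and each dilated/translated $a$ is a single neuron); and recovery of the exponentials from translates and dilates of $a$ by deconvolution against $\widehat{D^{\ell}a}$. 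Two caveats are worth making explicit. First, the step ``$\ell$-finiteness excludes the polynomial case'' relies on Hornik's actual definition, which requires $a\in C^{\ell}(\mathds{R})$ and $0<\int_{\mathds{R}}\abs{D^{\ell}a}\,\mathrm{d}\lambda<\infty$; the restatement in the paper only asks $a\in C^{1}$ and $\int_{\mathds{R}}\abs{D^{\ell}a}<\infty$, under which a constant activation would be admissible and the conclusion false, so the obstacle you flag is precisely the condition that must be restored rather than verified. Second, since $a$ itself need not be integrable, ``$\widehat{a}$ nonzero on an open interval'' only makes sense through the distributional identity $\widehat{D^{\ell}a}(\xi)=(\mathrm{i}\xi)^{\ell}\,\widehat{a}(\xi)$, and the deconvolution and its discretization into a finite sum must be controlled in the $C^{m}$ norm (e.g.\ by working with $D^{\ell}a$ or with $a$ convolved with a compactly supported mollifier); these are exactly the technical steps the cited Corollary supplies, and you correctly identify them as the remaining work rather than glossing over them.
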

The main implication of the UAT property is that given a compact domain on $\mathds{R}^d$ and an appropriate activation function, one can approximate any Sobolev function by shallow neural networks\footnote{It is clear that the above statement generalizes to deep neural networks containing multiple hidden layers.} with arbitrary accuracy. It is worth to highlight that in the context of a regression Monte Carlo application, this does not establish an implementable \emph{regression bias} due to the lack of bounds on the width of the hidden layer.
We remark that the above version is not a state of the art result and refer to \cite{pinkus_approximation_1999} for a classical survey on the subject.

\paragraph{Layer Normalization.} 

Normalization is a standard tool to enhance the convergence of stochastic gradient descent like algorithms \cite{goodfellow_deep_2016}. In standard examples \cite{han_solving_2018} this is usually done by a so-called \emph{batch normalization} technique. However, as we shall see, in our setting batch normalization is computationally intensive as it ruins batch independence and implies quadratic dependence of the Jacobian tensor on the chosen batch size.
Hence, we instead deploy \emph{layer normalization} \cite{ba_layer_2016} where normalization takes place across the output activations of a given hidden layer.
Therefore, the final network architecture considered in \autoref{sec:numerical_experiments} is described by the sequence of compositions
\begin{align}\label{architecture:nn}
	\Psi(x\vert \bar{\Theta})\coloneqq a^{\text{out}}\circ A^{L+1}(\cdot\vert\theta^{L+1})\circ a\circ A^{L}(\cdot\vert\theta^{L})\circ \bar{n}\circ a\circ \dots\circ \bar{n}\circ A^1(\cdot\vert\theta^1)\circ x,
\end{align}
with $\bar{n}(\cdot\vert \beta_l)$ and $\bar{\Theta}\coloneqq (\Theta, \beta_1, \dots, \beta_{L-1})$, where $\beta_l$ denotes the $l$'th normalization layer's parameters -- see \cite{ba_layer_2016}.

\subsection{A Deep BSDE approach}
In what follows, we formulate a Deep BSDE approach similar to \cite{hure_deep_2020}, which scales well in high-dimensional settings and tackles the fully-implementable scheme \autoref{scheme:osm:implementable} in a neural network least-squares Monte Carlo framework. The main difference between our approach and that of \cite{hure_deep_2020} is that, unlike in the discretization problem \autoref{scheme:theta_bsde}, we solve the $d$-dimensional linear BSDE of the Malliavin derivatives in \autoref{eq:fbsde_fbsde:malliavin_bsde} -- on top of the scalar BSDE \autoref{eq:fbsde_fbsde:bsde}. We separate the solutions of these two BSDEs and perform two distinct neural network regressions at each time step. We distinguish between two approaches. The first involves an additional layer of parametrization in which the matrix-valued $\Gamma$ process is approximated by an $\mathds{R}^{d\times d}$-valued neural network. In the second, we take advantage of neural networks being dense function approximators in Sobolev spaces provided by \autoref{thm:hornik:uat:sobolev}, circumvent parametrizing the $\Gamma$ process and instead obtain it as the direct derivative of the $Z$ process via automatic differentiation -- in a way very similar to the second scheme (DBDP2) of \cite{hure_deep_2020}. In doing so, we require a so-called Jacobian training where the loss is dependent of the derivative of the neural network involved.

In order to motivate the merged problem formulation, notice that by \autoref{ass:error_analysis} on the coefficients of the BSDE, the arguments of the conditional expectations in \autoref{scheme:osm:implementable} are all $\mathds{L}^2$-integrable random variables. Consequently, \autoref{scheme:osm:implementable:z}, combined with the martingale representation theorem, implies the existence of a unique random process $D_n\widetilde{Z}_r$ such that
\begin{align}
	D_n\widehat{Y}_{n+1}^\pi + \Delta t_n f^D(t_{n+1}, \Xhat{n+1}^\pi, \DXcheck{n}{n+1, n}^\pi) &= \widecheck{Z}_n^\pi + \int_{t_{n}}^{t_{n+1}} D_n\widetilde{Z}_r\mathrm{d}W_r.\label{def:dz:martingale_representation}
\end{align}
It\^{o}'s isometry implies that the $\mathds{L}^2$-projection of $D_n\widetilde{Z}_r$ coincides with $D_n\widecheck{Z}_n^\pi$ in \autoref{scheme:osm}
\begin{align}\label{eq:dz:martingale_representation=dz}
	D_n\widecheck{Z}_n^\pi= \frac{1}{\Delta t_n}\CondExpn{n}{\int_{t_{n}}^{t_{n+1}} D_n\widetilde{Z}_r\mathrm{d}r}.
\end{align}
Thereupon, $\widecheck{Z}_n^\pi + D_n\widecheck{Z}_n^\pi\Delta W_n$ is not just the best $\mathds{L}^2$-projection of the left-hand side of \autoref{def:dz:martingale_representation} but also of the arguments of the conditional expectations on the right-hand side of \autoref{scheme:osm:implementable:dz}. Hence, it simultaneously solves the discretization problems \autoref{scheme:osm:implementable:dz} and \autoref{scheme:osm:implementable:z}.

Motivated by these observations the Deep BSDE approach then goes as follows -- the complete algorithm is collected in \Cref{alg:osm}. We set $\widehat{Y}_N^\pi=g(X_N^\pi)$, $\widehat{Z}_N^\pi=\nabla_x g(X_N^\pi)\sigma(T, X_N^\pi)$ and $\widehat{\Gamma}_N^\pi=\nabla_x (\nabla_x g \sigma)(T, X_N^\pi)$. Thereafter, each time step's $Y$, $Z$ and $\Gamma$ is parametrized by three independent fully-connected feedforward neural networks $\varphi(\cdot\vert \theta^y):\mathds{R}^{d\times 1}\to\mathds{R}$, $\psi(\cdot\vert\theta^z):\mathds{R}^{d\times 1}\to\mathds{R}^{1\times d}$ and $\chi(\cdot\vert\theta^\gamma):\mathds{R}^{d\times 1}\to\mathds{R}^{d\times d}$ of the type \autoref{architecture:nn}. The parameter sets $(\theta^z, \theta^\gamma)$ and $\theta^y$ are trained in two separate regressions. First, in light of \autoref{eq:dz:martingale_representation=dz}, we define the loss function of the regression problem corresponding to \autoref{scheme:osm:implementable:dz}--\autoref{scheme:osm:implementable:z} by 
\begin{align}\label{loss:z:parametrized}
	\mathcal{L}_n^{z, \gamma}(\theta^z, \theta^\gamma)\coloneqq \begin{aligned}[t]
		\mathds{E}\Big[\Big|&(1 + \Delta t_n \nabla_y f(t_{n+1}, \Xhat{n+1}^\pi))D_n\widehat{Y}_{n+1}^\pi+ \Delta t_n \nabla_x f(t_{n+1}, \Xhat{n+1}^\pi)D_nX_{n+1}^\pi \\ &- \psi(X_n^\pi\vert\theta^z) + \Delta t_n\nabla_z f(t_{n+1}, \Xhat{n+1}^\pi) \chi(X_n^\pi\vert\theta^\gamma)\sigma(t_n, X_n^\pi) - \chi(X_n^\pi\vert\theta^\gamma)\sigma(t_n, X_n^\pi)\Delta W_n\Big|^2\Big],
	\end{aligned}
\end{align}
where we approximate $D_nZ_n^\pi$ by $\chi(X_n^\pi\vert\theta^\gamma)D_nX_n^\pi$, according to the Malliavin chain rule.
We gather an approximation of the minimal parameter set $(\theta_n^{z, *}, \theta_n^{\gamma, *})\in \argmin_{(\theta^z, \theta^\gamma)}\mathcal{L}_n^z(\theta^z, \theta^\gamma)$ after minimizing an empirically observed version of the loss function through a stochastic gradient descent optimization, resulting in approximations $\widehat{\theta}^z_n$ and $\widehat{\theta}_n^\gamma$ -- see \Cref{alg:osm}. The final approximations are given by $\widehat{Z}_n^\pi\coloneqq \psi(X_n^\pi\vert \widehat{\theta}_n^z)$ and $\widehat{\Gamma}_n^\pi\coloneqq \chi(X_n^\pi\vert \widehat{\theta}_n^\gamma)$.

Similarly to the second scheme in \cite{hure_deep_2020}, an alternative formulation can be given which avoids parametrizing the $\Gamma$ process, and instead approximates it as the direct derivative of the $Z$ process provided by the Malliavin chain rule lemma \autoref{lemma:malliavin_chain_rule}. Eventually, this implies the direct connection $\chi(X_n^\pi\vert\theta^\gamma)\equiv \nabla_x \psi(X_n^\pi\vert\theta^z)$, with which the corresponding loss function becomes
\begin{align}\label{loss:z:ad}
	\mathcal{L}_n^{z, \nabla z}(\theta^z)\coloneqq \begin{aligned}[t]
		\mathds{E}\Big[\Big|&(1 + \Delta t_n \nabla_y f(t_{n+1}, \Xhat{n+1}^\pi))D_n\widehat{Y}_{n+1}^\pi+ \Delta t_n \nabla_x f(t_{n+1}, \Xhat{n+1}^\pi)D_nX_{n+1}^\pi \\ &- \psi(X_n^\pi\vert\theta^z) + \Delta t_n\nabla_z f(t_{n+1}, \Xhat{n+1}^\pi) \nabla_x \psi(X_n^\pi\vert\theta^z)\sigma(t_n, X_n^\pi) - \nabla_x \psi(X_n^\pi\vert\theta^z)\sigma(t_n, X_n^\pi)\Delta W_n\Big|^2\Big],
	\end{aligned}
\end{align}
where we exploited the relation between the $\Gamma$ and $Z$ processes, provided by the Malliavin chain rule, and set $D_n\widehat{Z}_n^\pi=\nabla_x \widehat{z}_n^\pi(X_n^\pi)D_nX_n^\pi$. The SGD approximation of the optimal parameter space $\theta^{z, *}_n\in\argmin_{\theta^z}\mathcal{L}_n^{z,\nabla z}(\theta^z)$ is denoted by $\widehat{\theta}_n^z$, and the final approximations are of the form  $\widehat{Z}_n^\pi\coloneqq \psi(X_n^\pi\vert \widehat{\theta}_n^z)$ and $\widehat{\Gamma}_n^\pi\coloneqq \nabla_x \psi(X_n^\pi\vert \widehat{\theta}_n^z)$.

Subsequently, these approximations are plugged into the regression problem of \autoref{scheme:osm:implementable:y}. This step, apart from the additional theta-discretization, is identical to that of \cite{hure_deep_2020} and the loss function reads as
\begin{align}\label{loss:y}
	\mathcal{L}_n^y(\theta^y)\coloneqq \Expectation{\abs{\widehat{Y}_{n+1}^\pi + (1-\vartheta_y)\Delta t_n f(t_{n+1}, \Xhat{n+1}^\pi) -\varphi(X_n^\pi\vert\theta^y) + \vartheta_y\Delta t_n f(t_n, X_n^\pi, \varphi(X_n^\pi\vert\theta^y), \widehat{Z}_n^\pi) - \widehat{Z}_n^\pi\Delta W_n}^2}.
\end{align}
The stochastic gradient descent approximation of the optimal parameter space $\theta_n^{y, *}\in \argmin_{\theta^y} \mathcal{L}_n^y(\theta^y)$ is denoted by $\widehat{\theta}_n^y$ and the final approximation is given by $\widehat{Y}_n^\pi\coloneqq \varphi(X_n^\pi\vert\widehat{\theta}_n^y)$.
At last, motivated by the continuity of the processes $\{(Y_t, Z_t)\}_{0\leq t\leq T}$ in the Malliavin framework, we initialize the parameters of the next time step's parametrizations according to
\begin{align}\label{eq:transfer_learning}
	\theta^z = \widehat{\theta}_n^z,\qquad \theta^\gamma=\widehat{\theta}^\gamma_n,\qquad \theta^y = \widehat{\theta}_n^y.
\end{align}
Such a \emph{transfer learning} trick guarantees a good initialization of the SGD iterations for $\widecheck{Y}_{n-1}^\pi, \widecheck{Z}_{n-1}^\pi, \widecheck{\Gamma}_{n-1}^\pi$, simplifying the learning problem and reducing the number of iteration steps required for convergence. For an empirical assessment on the efficiency of this transfer learning trick we refer to \cite[Sec.5.3]{chen_deep_2021}.

\begin{algorithm}[htp]
	\KwIn{$\pi(N)$, $\vartheta_y\in[0, 1]$ -- discretization parameters}
	\KwIn{$B\in \mathds{N}^+$, $I\in \mathds{N}$, $\eta:\mathds{N}\to\mathds{R}$ -- training parameters}
	\KwResult{$\{(\widehat{Y}_n^\pi, \widehat{Z}_n^\pi, \widehat{\Gamma}_n^\pi)\}_{n=0,\dots, N}$ -- discrete time approximations over $\pi$}
	
	$\widehat{Y}_N^\pi\gets g(X_N^\pi)$, $\widehat{Z}_N^\pi \gets  \nabla_x g(X_N^\pi)\sigma(t_N, X_N^\pi),\quad \widehat{\Gamma}_N^\pi\gets \nabla_x(\nabla_x g\sigma)(t_N, X_N^\pi)$ -- collect terminal condition
	
	$\varphi(\cdot\vert\theta^y):\mathds{R}^{d\times 1}\to \mathds{R}$, $\psi(\cdot\vert\theta^z):\mathds{R}^{d\times 1}\to\mathds{R}^{1\times d}$, $\chi(\cdot\vert\theta^\gamma):\mathds{R}^{d\times 1}\to \mathds{R}^{d\times d}$ -- neural network parametrizations
	
	\For{$n=N-1, \dots, 0$}{
		
		\uIf{$n=N-1$}{
			$\theta^{z, (0)}, \theta^{y, (0)}$ -- initialize parameter sets, according to \cite{glorot_understanding_2010}
		}
		\Else{
			$\theta^{z, (0)} \gets \widehat{\theta}_{n+1}^z,\quad \theta^{y, (0)}\gets \widehat{\theta}_{n+1}^y$ -- transfer learning initialization
		}
	
		\vspace{0.2cm}
		\underline{\textbf{Solve \autoref{scheme:osm:implementable:z}--\autoref{scheme:osm:implementable:dz}.}}
		
		\For{$i=0, \dots, I-1$}{
			$\{\{X_m^\pi(b)\}_{0\leq m\leq N}\}_{b=1}^B$ -- Euler-Maruyama simulations by \autoref{scheme:sde:euler}
			
			$\{D_nX_{n+1}^\pi(b)\}_{b=1}^B$ -- Euler-Maruyama approximations by \autoref{scheme:sde:dx:euler}

			calculate empirical loss of \autoref{loss:z:parametrized} or \autoref{loss:z:ad}
			\begin{align}
				\widehat{\mathcal{L}}_n^{z, \gamma}(\theta^{z, (i)}, \theta^{\gamma(i)}) =\begin{aligned}[t]
					\frac{1}{B}\sum_{b=1}^B \lvert &(1 + \Delta t_n \nabla_y f(t_{n+1}, \Xhat{n+1}^\pi(b)))D_n\widehat{Y}_{n+1}^\pi(b)\\
					&+ \Delta t_n \nabla_x f(t_{n+1}, \Xhat{n+1}^\pi(b))D_{n}X_{n+1}^\pi(b)
					- \psi(X_n^\pi(b)\vert\theta^{z, (i)})\\
					&+ \Delta t_n\nabla_z f(t_{n+1}, \Xhat{n+1}^\pi(b))\chi(X_n^\pi(b)\vert\theta^{\gamma, (i)})\sigma(t_n, X_n^\pi)\\
					&- \chi(X_n^\pi(b)\vert\theta^{\gamma, (i)})\sigma(t_n, X_n^\pi(b))\Delta W_n(b)\rvert^2
				\end{aligned} 
			\end{align}
			
			$(\theta^{z, (i+1)}, \theta^{\gamma, (i+1)}) \gets (\theta^{z, (i)}, \theta^{\gamma, (i)}) - \eta(i)\nabla_{(\theta^z, \theta^\gamma)}\widehat{\mathcal{L}}_n^z(\theta^{z, (i)}, \theta^{\gamma, (i)})$ -- stochastic gradient descent update
		}
		
		$\widehat{\theta}_n^z\gets \theta^{z, (i+1)}$, $\widehat{\theta}^\gamma_n\gets \theta^{\gamma, (i+1)}$ -- collect optimal parameter estimations
		
		$\widehat{z}_n^\pi(\cdot)\gets \psi(\cdot\vert\widehat{\theta}_n^z), \quad \widehat{\gamma}_n^\pi(\cdot)\gets \chi(\cdot\vert \widehat{\theta}_n^\gamma)$ -- collect approximations of $\widecheck{Z}_n^\pi, \widecheck{\Gamma}_n^\pi$
		
		\vspace{0.2cm}
		\underline{\textbf{Solve \autoref{scheme:osm:implementable:y}.}}
		
		\For{$i=0, \dots, I-1$}{
			$\{\{X_m^\pi(b)\}_{0\leq m\leq N}\}_{b=1}^B$ -- Euler-Maruyama simulations by \autoref{scheme:sde:euler}
			
			calculate empirical loss of \autoref{loss:y}
			\begin{align}
				\widehat{\mathcal{L}}_n^y(\theta^{y, (i)}) = \begin{aligned}[t]
					\frac{1}{B}\sum_{b=1}^B \lvert &\widehat{Y}_{n+1}^\pi(b) + (1-\vartheta_y)\Delta t_n f(t_{n+1}, \Xhat{n+1}^\pi(b))-\varphi(X_n^\pi(b)\vert\theta^{y, (i)})\\ &+\vartheta_y\Delta t_n f(t_n, X_n^\pi(b), \varphi(X_n^\pi(b)\vert\theta^{y, (i)}), \widehat{Z}_n^\pi(b))- \widehat{Z}_n^\pi(b)\Delta W_n(b)\rvert^2
				\end{aligned}
			\end{align}
			
			$\theta^{y, (i+1)} \gets \theta^{y, (i)} - \eta(i)\nabla_{\theta}\widehat{\mathcal{L}}_n^y(\theta^{y, (i)})$ -- stochastic gradient descent step
		}
		
		$\widehat{\theta}_n^y\gets \theta^{y, (i+1)}$ -- collect optimal parameter estimations
		
		$\widehat{y}_n^\pi(\cdot)\gets \varphi(\cdot\vert\widehat{\theta}_n^y)$ -- collect approximations of $\widecheck{Y}_n^\pi$
	}
\caption{One-Step Malliavin Algorithm (OSM)}
\label{alg:osm}
\end{algorithm}

\paragraph{Dimensionality, linearity and vector-Jacobian products.}  The main reason why no numerical scheme has been proposed to solve the Malliavin BSDE in \autoref{eq:fbsde_fbsde:malliavin_bsde} is related to dimensionality. Since the $\Gamma$ process is an $\mathds{R}^{d\times d}$-valued process, its computational complexity in a least-squares Monte Carlo method has a quadratic dependence on the number of dimensions $d$. Indeed, a least-squares Monte Carlo approach for the BSDE \autoref{eq:fbsde:bsde} essentially comes down to the approximation of $d+1$-many conditional expectations. If, in addition, one would also like to solve the Malliavin BSDE \autoref{eq:fbsde_fbsde:malliavin_bsde} this leads to $d^2$ additional conditional expectations to be approximated, induced by the $\Gamma$ process. This observation justifies the use of deep neural network parametrizations which enable good scalability in high-dimensions. Moreover, notice that the training of the loss function \autoref{loss:z:ad} through an SGD optimization requires differentiating the loss with respect to the parameters $\theta^z$ in each step. With the loss already depending on the Jacobian of the mapping $\psi(\cdot\vert\theta^z)$, this in particular implies that in each SGD step one needs to calculate the Hessian of a vector-valued mapping $\psi$ with respect to the parameters $\theta^z$. Consequently, for high-dimensional problems the training of \autoref{loss:z:ad} becomes excessively intensive from a computational point of view. Nonetheless, what makes the Deep BSDE approach corresponding to \autoref{loss:z:ad} efficiently implementable is the linearity of the Malliavin BSDE \autoref{eq:fbsde_fbsde:malliavin_bsde}. In fact, due to linearity, one can circumvent explicitly calculating the Jacobian matrix of $Z$ as it suffices to calculate the vector-Jacobian product
\begin{align}\label{eq:vjp}
	\nabla_z f(t_{n+1}, \Xhat{n+1}^\pi)\nabla_x \psi(X_n^\pi\vert\theta^z) = \nabla_x \braket{v}{\psi(X_n^\pi\vert\theta^z)},\qquad v\coloneqq \nabla_z f(t_{n+1}, \Xhat{n+1}^\pi),
\end{align}
which boils down to computing a gradient instead.
This mitigates the computational costs of minimizing the automatic differentiated loss function in \autoref{loss:z:ad} in an SGD iteration.

\subsection{Regression error analysis}
In order to conclude the discussion on fully-implementable schemes for \autoref{scheme:osm:implementable}, we extend the discretization error results established by \autoref{thm:osm:discretization}, so that it incorporates the approximation errors of the arising conditional expectations. Even though we focus on the Deep BSDE approach, our arguments naturally extend to the BCOS estimates.
We consider shallow neural networks, with $S_1$-many hidden neurons and a hyperbolic tangent activation.
While distinguishing between the parametrized and automatic differentiated $\Gamma$ variants -- see \autoref{loss:z:parametrized} and \autoref{loss:z:ad}, respectively --, we rely on the following subclass of shallow neural networks
\begin{align}
	\Sigma_{C_{b}^{2}}(\Theta)\coloneqq \Big\{\psi(x\vert\theta^z(S_1)) \coloneqq W_1^2(S_1) \tanh(W_0^1(S_1) x + b_0) + b_1: \sum_{i=1}^{d}\sum_{j=1}^{S_{1}} \abs{[W_1^2(S_1)]_{i, j}} + \abs{[W_0^1(S_1)]_{j, i}}\leq \Upsilon({S_{1}})\Big\},
\end{align}
for some dominating sequence $\Upsilon: \mathds{N}_+\to\mathds{R}$. Then, due to the boundedness of the hyperbolic tangent function and its first two derivatives, the following upper bounds are in place for any $\psi(\cdot\vert\theta^z)\in \Sigma_{C_{b}^{2}}$
\begin{align}\label{eq:nn:derivative_bounds}
	\sup_{x\in\mathds{R}^{d\times 1}}\abs{\psi(x\vert\theta^z)} \leq \Upsilon({S_{1}}),\qquad \sup_{x\in\mathds{R}^{d\times 1}}\abs{\nabla_x \psi(x\vert\theta^z)}\leq\Upsilon^2({S_{1}}),\qquad \sup_{x\in\mathds{R}^{d\times 1}}\abs{\Hess \psi(x\vert\theta^z)}\leq\Upsilon^3({S_{1}}).
\end{align}
In light of \autoref{thm:hornik:uat:sobolev}, the hyperbolic tangent function is $\ell=1$-finite. Subsequently the family of shallow networks of the form \autoref{architecture:nn} is dense in $H^1(U)$ for any compact subset $U\subset \mathds{R}^{d\times 1}$.

The final approximations are denoted by $\widehat{Y}_{n+1}^\pi\coloneqq \widehat{y}_{n}^\pi(X_n^\pi)\eqqcolon \varphi(X_n^\pi\vert \widehat{\theta}_n^y)$, $\widehat{z}_{n+1}^\pi\coloneqq \widehat{z}_{n}^\pi(X_n^\pi)\eqqcolon \psi(X_n^\pi\vert \widehat{\theta}_n^z)$ and $\widehat{\Gamma}_{n+1}^\pi\coloneqq \widehat{\gamma}_{n}^\pi(X_n^\pi)\eqqcolon \chi(X_n^\pi\vert \widehat{\theta}_n^\gamma)$.
We introduce the short hand notations $\Delta \widecheck{Y}_n^\pi\coloneqq Y_{t_{n}} - \widecheck{Y}_n^\pi$, $\Delta \widecheck{Z}_n^\pi\coloneqq Z_{t_{n}} - \widecheck{Z}_n^\pi$, $\Delta \widecheck{\Gamma}_n^\pi\coloneqq \Gamma_{t_{n}} - \widecheck{\Gamma}_n^\pi$, and $\Delta \widehat{Y}_n^\pi\coloneqq Y_{t_{n}} - \widehat{Y}_n^\pi$, $\Delta \widehat{Z}_n^\pi\coloneqq Z_{t_{n}} - \widehat{Z}_n^\pi$, $\Delta \widehat{\Gamma}_n^\pi\coloneqq \Gamma_{t_{n}} - \widehat{\Gamma}_n^\pi$.
In light of the UAT property in \autoref{thm:hornik:uat:sobolev}, we define the \emph{regression biases}
\begin{align}\label{def:regression_errors}
	\begin{split}
		\epsilon_n^y&\coloneqq \inf_{\theta^y}\Expectation{\abs{\widecheck{y}_n^\pi(X_{n}^\pi) - \varphi(X_n^\pi\vert\theta^y)}^2},\\
		\epsilon_n^{z}&\coloneqq \inf_{\theta^z}\Expectation{\abs{\widecheck{z}_n^\pi(X_n^\pi) - \psi(X_n^\pi\vert\theta^z)}^2},\qquad 
		\epsilon_n^\gamma \coloneqq \inf_{\theta^\gamma} \Expectation{\abs{\left(\widecheck{\gamma}_n^\pi(X_n^\pi) - \chi(X_n^\pi\vert\theta^\gamma)\right)\sigma(t_n, X_n^\pi)}^2},\\
		\epsilon_n^{z, \nabla z}&\coloneqq \inf_{\theta^z}\Expectation{\abs{\widecheck{z}_n^\pi(X_n^\pi) - \psi(X_n^\pi\vert\theta^z)}^2 + \Delta t_n \abs{\left(\nabla_x \widecheck{z}_n^\pi(X_n^\pi) - \nabla_x \psi(X_n^\pi\vert\theta^z)\right)\sigma(t_n, X_n^\pi)}^2}.
	\end{split}
\end{align}
The goal is to establish an upper bound for the total approximation error defined by
\begin{align}
	\widehat{\mathcal{E}}^\pi(\abs{\pi})\coloneqq \max_n \Expectation{\abs{\Delta \widehat{Y}_n^\pi}^2} + \max_n \Expectation{\abs{\Delta \widehat{Z}_n^\pi}^2} + \Expectation{\sum_{n=0}^{N-1} \int_{t_{n}}^{t_{n+1}} \abs{\Gamma_r - \widehat{\Gamma}_n^\pi}^2\mathrm{d}r},
\end{align}
depending on not just the discretization but also the regression errors arising from the approximations of the conditional expectations in \autoref{scheme:osm:implementable}.
\begin{theorem}\label{thm:osm:regression}
	Let the conditions of \autoref{ass:error_analysis} be in place. Then, for sufficiently small $\abs{\pi}$, the total approximation error of the OSM scheme defined by the loss function \autoref{loss:z:parametrized} admits to
	\begin{align}\label{eq:osm:total_error:parametrized}
		\widehat{\mathcal{E}}^\pi(\abs{\pi})\leq C\left(\abs{\pi} + N\sum_{n=0}^{N-1} \{\epsilon_n^y+\epsilon_n^{z}\} +\sum_{n=0}^{N-1} \epsilon_n^\gamma\right).
	\end{align}
	Furthermore, in case the $\Gamma$ process is taken as the direct derivative of the $Z$ process as in \autoref{loss:z:ad}, the total error can be bounded by
	\begin{align}\label{eq:osm:total_error:ad}
		\widehat{\mathcal{E}}^\pi(\abs{\pi})\leq C\left(\abs{\pi} + N\sum_{n=0}^{N-1} \{\epsilon_n^y+\epsilon_n^{z, \nabla z}\} + \frac{\Upsilon^6({S_{1}})}{N}\right),
	\end{align}
	where $C$ is a constant independent of the time partition $\pi^N$.
\end{theorem}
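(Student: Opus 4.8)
The plan is to build on the discretization-error analysis of \autoref{thm:osm:discretization} by inserting, at every time step, the extra error committed when the exact conditional expectations $(\widecheck{Y}_n^\pi,\widecheck{Z}_n^\pi,\widecheck{\Gamma}_n^\pi)$ are replaced by their neural-network surrogates $(\widehat{Y}_n^\pi,\widehat{Z}_n^\pi,\widehat{\Gamma}_n^\pi)$. Writing $\Delta\widehat{Y}_n^\pi=\Delta\widecheck{Y}_n^\pi+(\widecheck{Y}_n^\pi-\widehat{Y}_n^\pi)$ and likewise for $Z$ and $\Gamma$, I would first control the ``fresh'' error $\widecheck{\cdot}_n-\widehat{\cdot}_n$ at step $n$. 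By It\^{o}'s isometry and the martingale representation in \autoref{def:dz:martingale_representation}, each empirical target in \autoref{scheme:osm:implementable} is the $\mathds{L}^2$-projection onto $\mathcal{F}_{t_{n}}$ of the corresponding $\mathcal{F}_{t_{n+1}}$-measurable random variable, so the losses \autoref{loss:z:parametrized}--\autoref{loss:y} decompose, up to cross terms of order $\Delta t_n$, into a parameter-independent ``variance floor'' plus $\Expectation{\abs{\,\widecheck{Y}_n^\pi-\varphi(X_n^\pi\vert\theta^y)\,}^2}$ for $Y$, and an analogous expression for $Z$ (with an additional $\Delta t_n$-weighted $\Gamma$-block in the parametrised case, stemming from the $-\chi(X_n^\pi\vert\theta^\gamma)\sigma\Delta W_n$ term). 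Under the standing assumption that the stochastic gradient descent iterations converge to the minimiser over the network class, this minimiser realizes the regression bias, so that for $\abs{\pi}$ small enough to absorb the implicitness of \autoref{scheme:osm:implementable:dz}--\autoref{scheme:osm:implementable:y} one gets $\Expectation{\abs{\widecheck{Y}_n^\pi-\widehat{Y}_n^\pi}^2}\leq C\epsilon_n^y$, $\Expectation{\abs{\widecheck{Z}_n^\pi-\widehat{Z}_n^\pi}^2}\leq C\epsilon_n^{z}$ (resp. $C\epsilon_n^{z,\nabla z}$), and $\Delta t_n\Expectation{\abs{(\widecheck{\Gamma}_n^\pi-\widehat{\Gamma}_n^\pi)\sigma}^2}\leq C(\epsilon_n^z+\Delta t_n\epsilon_n^\gamma)$ for the parametrised $\Gamma$. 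In the automatic-differentiation variant \autoref{loss:z:ad} the $\Gamma$-network equals $\nabla_x$ of the $Z$-network, the loss no longer splits into orthogonal blocks, and one must use the a priori sup-norm bounds \autoref{eq:nn:derivative_bounds} on $\nabla_x\psi$ and $\Hess{\psi}$ together with the mean-squared continuity of $X^\pi$ to control the residual cross terms; this is where a remainder of order $\Delta t_n^2\,\Upsilon^6({S_{1}})$ is incurred at each step.

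Next I would re-run \ref{thm:osm:discretization:step:1} through Step~5 of the proof of \autoref{thm:osm:discretization} with $(\widehat{Y}_{n+1}^\pi,\widehat{Z}_{n+1}^\pi)$ in the role that $(Y_{n+1}^\pi,Z_{n+1}^\pi)$ played there; nothing in those steps changes except the naming, so they yield the one-step inequality
\begin{align*}
	\Expectation{\abs{\Delta\widecheck{Y}_n^\pi}^2}+\Expectation{\abs{\Delta\widecheck{Z}_n^\pi}^2}
	&\leq(1+C\Delta t_n)\bigl(\Expectation{\abs{\Delta\widehat{Y}_{n+1}^\pi}^2}+\Expectation{\abs{\Delta\widehat{Z}_{n+1}^\pi}^2}\bigr)+C\bigl(\Delta t_n^2+r_n\bigr),
\end{align*}
where $r_n\coloneqq\Expectation{\int_{t_{n}}^{t_{n+1}}\abs{D_{t_{n}}Z_r-\DZprojection{n}{n+1}}^2\mathrm{d}r}$ satisfies $\sum_n r_n=\varepsilon^{DZ}(\abs{\pi})=\mathcal{O}(\abs{\pi})$ by \autoref{eq:error_analyis:epsilon^DZ}, and where the analytic terminal conditions under \ref{ass:error_analysis:sde:abm} have already been used. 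Setting $a_n\coloneqq\Expectation{\abs{\Delta\widehat{Y}_n^\pi}^2}+\Expectation{\abs{\Delta\widehat{Z}_n^\pi}^2}$ and applying the inequality $(x+y)^2\leq(1+\Delta t_n)x^2+(1+\Delta t_n^{-1})y^2$ to each of $\Delta\widehat{Y}_n^\pi$ and $\Delta\widehat{Z}_n^\pi$, one obtains $a_n\leq(1+\Delta t_n)\bigl(\Expectation{\abs{\Delta\widecheck{Y}_n^\pi}^2}+\Expectation{\abs{\Delta\widecheck{Z}_n^\pi}^2}\bigr)+2\Delta t_n^{-1}(\epsilon_n^y+\epsilon_n^{z})$, and substituting the previous line gives $a_n\leq(1+C\Delta t_n)a_{n+1}+C(\Delta t_n^2+r_n)+C\Delta t_n^{-1}(\epsilon_n^y+\epsilon_n^{z})$.

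The decisive observation is that the weight $\Delta t_n^{-1}$ on the regression biases is unavoidable: a split with a constant-order parameter would replace the multiplier $1+C\Delta t_n$ by one bounded away from $1$ and make the discrete Gr\"onwall constant explode like $2^N$, whereas the $\Delta t_n$-parameter preserves the $1+C\Delta t_n$ multiplier at the price of taxing each bias by $\Delta t_n^{-1}$. With terminal errors vanishing and $\sum_n\Delta t_n^2,\ \sum_n r_n=\mathcal{O}(\abs{\pi})$, the discrete Gr\"onwall lemma then yields $\max_n a_n\leq C\bigl(\abs{\pi}+\sum_n\Delta t_n^{-1}(\epsilon_n^y+\epsilon_n^{z})\bigr)$, and for a regular (quasi-uniform) mesh $\Delta t_n^{-1}\leq CN$, which produces exactly the first two terms of \autoref{eq:osm:total_error:parametrized} (and of \autoref{eq:osm:total_error:ad} with $\epsilon_n^{z,\nabla z}$ replacing $\epsilon_n^z$, the extra $\Delta t_n^2\,\Upsilon^6({S_{1}})$ remainders summing to $C\,\Upsilon^6({S_{1}})/N$).

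Finally, for the $\Gamma$-component of $\widehat{\mathcal{E}}^\pi(\abs{\pi})$ I would use uniform ellipticity \ref{ass:error_analysis:sde:uniform_ellipticity} to rewrite $\abs{(\Gamma_r-\widehat{\Gamma}_n^\pi)\sigma}^2=\abs{D_{t_{n}}Z_r-D_n\widehat{Z}_n^\pi}^2$, split it as in \autoref{eq:dzprojection:split} into $r_n$ plus $\Delta t_n\abs{\DZprojection{n}{n+1}-D_n\widehat{Z}_n^\pi}^2$, and bound the latter by the hatted analogue of the \ref{thm:osm:discretization:step:1}/Step~5 estimate plus $2\Delta t_n\Expectation{\abs{(\widecheck{\Gamma}_n^\pi-\widehat{\Gamma}_n^\pi)\sigma}^2}$. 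Since $\Gamma$ never enters any later step's target, this contribution does not compound, so summation leaves, over and above $C(\abs{\pi}+N\sum_n(\epsilon_n^y+\epsilon_n^z))$, an extra $\sum_n\Delta t_n\,C\epsilon_n^\gamma\leq C\sum_n\epsilon_n^\gamma$ in the parametrised case, while in the automatic-differentiation case $\Expectation{\abs{(\widecheck{\Gamma}_n^\pi-\widehat{\Gamma}_n^\pi)\sigma}^2}\leq C\Delta t_n^{-1}\epsilon_n^{z,\nabla z}+C\Delta t_n\,\Upsilon^6({S_{1}})$ is already subsumed by $N\sum_n\epsilon_n^{z,\nabla z}$ and $\Upsilon^6({S_{1}})/N$. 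I expect the main obstacle to be the loss-to-bias reduction of the first paragraph: rigorously handling the $\mathcal{F}_{t_{n+1}}$-measurability of the coefficients $\nabla_x f,\nabla_y f,\nabla_z f$ appearing inside the losses, the fixed-point argument needed for the implicit parts when $\vartheta_y>0$, and the non-orthogonality of \autoref{loss:z:ad}, which forces the use of \autoref{eq:nn:derivative_bounds} and is the source of the $\Upsilon^6({S_{1}})/N$ term.
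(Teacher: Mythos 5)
Your treatment of the parametrized case \autoref{loss:z:parametrized} is essentially the paper's own argument: the same martingale-representation/orthogonality decomposition of the loss into a parameter-independent variance floor plus the squared distances to $\widecheck{Z}_n^\pi$ and (with weight $\Delta t_n$) to $\widecheck{\Gamma}_n^\pi\sigma$, the same reduction of perfect-SGD errors to the biases in \autoref{def:regression_errors}, the same Young split with a $\Delta t_n$-dependent parameter that taxes each per-step regression error by $\Delta t_n^{-1}$ (whence the factor $N$), the same discrete Gr\"onwall argument, and the same non-compounding treatment of the $\Gamma$ component giving $\sum_n \epsilon_n^\gamma$.

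For the automatic-differentiation bound \autoref{eq:osm:total_error:ad}, however, there is a genuine gap, and the mechanism you point to is not the right one. The loss \autoref{loss:z:ad} is structurally identical to \autoref{loss:z:parametrized} with $\chi$ replaced by $\nabla_x\psi$, so the orthogonal decomposition goes through verbatim: there is no ``non-orthogonality'' or residual cross term at the level of the loss. What the loss argument delivers in this case is a bound on $\Expectation{\abs{\widecheck{Z}_n^\pi-\widehat{Z}_n^\pi}^2}+\Delta t_n\Expectation{\abs{(\widecheck{\Gamma}_n^\pi-\nabla_x\widehat{Z}_n^\pi)\sigma}^2}$ in terms of $\inf_{\theta^z}\big(\Expectation{\abs{\widecheck{Z}_n^\pi-\psi}^2}+\Delta t_n\Expectation{\abs{(\widecheck{\Gamma}_n^\pi-\nabla_x\psi)\sigma}^2}\big)$, and this infimum is \emph{not} $\epsilon_n^{z,\nabla z}$: the bias in \autoref{def:regression_errors} measures the distance of $\nabla_x\psi$ to $\nabla_x\widecheck{z}_n^\pi$, whereas here the distance is to $\widecheck{\gamma}_n^\pi$. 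The missing step is therefore a bound on the \emph{derivative-representation error} $\Expectation{\abs{(\nabla_x\widecheck{z}_n^\pi-\widecheck{\gamma}_n^\pi)\sigma}^2}$, which the paper obtains by differentiating the conditional expectation defining $\widecheck{z}_n^\pi$ in \autoref{scheme:osm:implementable:z} (Leibniz rule) and applying the integration-by-parts formula \autoref{eq:app:integration-by-parts:multi-d:vector} to the target of \autoref{scheme:osm:implementable:dz}; the resulting difference consists only of $\Delta t_n$-weighted terms involving $\widehat{\gamma}_n^\pi=\nabla_x\widehat{z}_n^\pi$ and $\nabla_x\widehat{\gamma}_n^\pi=\Hess{\widehat{z}_n^\pi}$, which are then controlled by the network bounds \autoref{eq:nn:derivative_bounds}, producing the $\mathcal{O}(\Delta t_n^2\,\Upsilon^4(S_1)+\Delta t_n^2\,\Upsilon^6(S_1))$ contribution per step that, after the $\Delta t_n^{-1}$ weighting and summation, yields $\Upsilon^6(S_1)/N$. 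Without this comparison of $\nabla_x\widecheck{z}_n^\pi$ with $\widecheck{\gamma}_n^\pi$ your plan cannot close the AD estimate, since mean-squared continuity of $X^\pi$ and sup-norm bounds applied inside the loss alone do not relate $\epsilon_n^{z,\nabla z}$ to the $\Gamma$-target of the scheme.
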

\begin{proof}
	Throughout the proof $C$ denotes a constant independent of the time partition, whose value may vary from line to line. We only highlight arguments which significantly differ from the ones of \autoref{thm:osm:discretization}.
	
	\begin{steps}[wide, labelwidth=0pt, labelindent=0pt]
		\item \textit{Discrete estimates for $Y$, $Z$ and $\Gamma$}. Steps analogously to \autoref{thm:osm:discretization} -- see \autoref{error:disc:dzprojection} and \autoref{eq:error_analysis:estimate:precombined} in particular --, on top of the inequality $\rho>0: (1 - \rho)a^2 - (1/\rho) b^2\leq (1 - \rho)a^2 + (1 - 1/\rho)b^2\leq (a+b)^2$, lead to
		\begin{align}
			\Delta t_n\Expectation{\abs{\DZprojection{n}{n+1} - D_n\widehat{Z}_n^\pi}^2}	&\leq \begin{aligned}[t]
				&8d \left\{\Expectation{\abs{\Delta D_n\widehat{Y}_{n+1}^\pi}^2} - \Expectation{\abs{\CondExpn{n}{\Delta D_n\widehat{Y}_{n+1}^\pi}}^2}\right\}\\
				&+16dL_{\fD}^2\Delta t_n\begin{aligned}[t]
					\Bigg\{&C\Delta t_n^2 + 2\Delta t_n \left(\Expectation{\abs{\Delta X_{n+1}^\pi}^2} + \Expectation{\abs{\Delta \widehat{Y}_{n+1}^\pi}^2}\right)\\
					&+ 2\Delta t_n\left(\Expectation{\abs{\Delta \widehat{Z}_{n+1}^\pi}^2 + \Expectation{\abs{\Delta D_nX_{n+1}^\pi}^2}}\right)\\
					&+2\Delta t_n \Expectation{\abs{\Delta D_n\widehat{Y}_{n+1}^\pi}^2}\\
					&+\Expectation{\int_{t_{n}}^{t_{n+1}} \abs{D_{t_{n}}Z_r - \DZprojection{n}{n+1}}^2\mathrm{d}r}\Bigg\},
				\end{aligned}\\
				& +2\Delta t_n\Expectation{\abs{D_n\widehat{Z}_n^\pi - D_n\widecheck{Z}_n^\pi}^2},
			\end{aligned}\label{eq:regression_error:ground_estimates:gamma}\\
			(1-\beta\Delta t_n)\left(\Expectation{\abs{\Delta \widehat{Y}_n^\pi}^2} + \Expectation{\abs{\Delta \widehat{Z}_n^\pi}^2}\right)&\leq\begin{aligned}[t]
				&(1+C\Delta t_n)\left(\Expectation{\abs{\Delta \widehat{Y}_{n+1}^\pi}^2} + \Expectation{\abs{\Delta \widehat{Z}_{n+1}^\pi}^2}\right)\\
				&+C\left\{\Delta t_n^2+\Expectation{\int_{t_{n}}^{t_{n+1}} \abs{D_{t_{n}}Z_r - \DZprojection{n}{n+1}}^2\mathrm{d}r}\right\}\\
				&+ \frac{1}{\beta\Delta t_n}\left(\Expectation{\abs{\widehat{Y}_{n}^\pi - \widecheck{Y}_{n}^\pi}^2}+ \Expectation{\abs{\widehat{Z}_{n}^\pi - \widecheck{Z}_n^\pi}^2}\right)\\
				&+ C\Delta t_n\Expectation{\abs{\left(\widehat{\Gamma}_n^\pi - \widecheck{\Gamma}_n^\pi\right)\sigma(t_n, X_n^\pi)}^2},
			\end{aligned}\label{eq:regression_error:ground_estimates:y_z}
		\end{align}
		with any $\beta>0$.
		
		\item \textit{Regression errors induced by the loss functions.} Using the definition \autoref{def:dz:martingale_representation} and the relation \autoref{eq:dz:martingale_representation=dz}, the loss function in \autoref{loss:z:parametrized} can be rewritten as follows
		\begin{align}\label{eq:regression_error:widetilde_l}
			\begin{split}
				\mathcal{L}_n^{z, \gamma}(\theta^z, \theta^\gamma)&= \begin{aligned}[t]
					&\Expectation{\abs{\widecheck{Z}_n^\pi-\psi(X_n^\pi\vert\theta^z) + \Delta t_n\nabla_z f(t_{n+1}, \Xhat{n+1}^\pi) \left(\chi(X_n^\pi\vert \theta^\gamma) - \widecheck{\Gamma}_n^\pi\right)\sigma(t_n, X_n^\pi)}^2}\\
					&+ \Delta t_n \Expectation{\abs{\left(\widecheck{\Gamma}_n^\pi - \chi(X_n^\pi\vert\theta^\gamma)\right)\sigma(t_n, X_n^\pi)}^2} + \Expectation{\int_{t_{n}}^{t_{n+1}}\abs{D_n\widetilde{Z}_r - D_n\widecheck{Z}_n^\pi}^2\mathrm{d}r}
				\end{aligned}\\
				&\eqqcolon \widetilde{\mathcal{L}}_n^{z, \gamma}(\theta^z, \theta^\gamma)+ \Expectation{\int_{t_{n}}^{t_{n+1}}\abs{D_n\widetilde{Z}_r - D_n\widecheck{Z}_n^\pi}^2\mathrm{d}r}.
			\end{split}
		\end{align}
		The inequality $(a+b)^2\leq (1+\varrho_1\Delta t_n)a^2+(1 + 1/(\varrho_1 \Delta t_n))b^2$, on top of the bounded differentiability of $f$ provided by \autoref{ass:error_analysis}, implies
		\begin{align}
			\widetilde{\mathcal{L}}_n^{z, \gamma}(\theta^z, \theta^\gamma)\leq \begin{aligned}[t]
				&(1+\varrho_1\Delta t_n)\Expectation{\abs{\widecheck{Z}_n^\pi - \psi(X_n^\pi\vert\theta^z)}^2}\\
				&+ \left[\frac{L_{\nabla f}^2}{\rho_1}(1+\varrho_1\Delta t_n) + 1\right]\Delta t_n \Expectation{\abs{\left(\widecheck{\Gamma}_n^\pi - \chi(X_n^\pi\vert\theta^\gamma)\right)\sigma(t_n, X_n^\pi)}^2}.
			\end{aligned}
		\end{align}
		By the inequality $(a+b)^2\geq (1-\varrho_2\Delta t_n)a^2+(1 - 1/(\varrho_2 \Delta t_n))b^2\geq (1-\varrho_2\Delta t_n)a^2-1/(\varrho_2 \Delta t_n)b^2$, the following also holds
		\begin{align}
			\widetilde{\mathcal{L}}_n^{z, \gamma}(\theta^z, \theta^\gamma)\geq \begin{aligned}[t]
				&(1-\varrho_2\Delta t_n)\Expectation{\abs{\widecheck{Z}_n^\pi - \psi(X_n^\pi\vert\theta^z)}^2}+\left(1- \frac{L^2_{\nabla f}}{\varrho_2}\right)\Delta t_n \Expectation{\abs{\left(\widecheck{\Gamma}_n^\pi - \chi(X_n^\pi\vert\theta^\gamma)\right)\sigma(t_n, X_n^\pi)}^2}.
			\end{aligned}
		\end{align}
		Choosing $\varrho^*_2\coloneqq 2L^2_{\nabla f}$, we subsequently have
		\begin{align}
			\widetilde{\mathcal{L}}_n^{z, \gamma}(\theta^z, \theta^\gamma)\geq (1-\varrho_2^*\Delta t_n)\Expectation{\abs{\widecheck{Z}_n^\pi - \psi(X_n^\pi\vert\theta^z)}^2} + \frac{\Delta t_n}{2}\Expectation{\abs{\left(\widecheck{\Gamma}_n^\pi - \chi(X_n^\pi\vert\theta^\gamma)\right)\sigma(t_n, X_n^\pi)}^2}.
		\end{align}
		Assuming that $(\widehat{\theta}_n^z, \widehat{\theta}_n^\gamma)$ is a perfect approximation -- see \autoref{remark:sgd} -- of the minimal parameter space $(\theta_n^{z, *}, \theta_n^{\gamma, *})\in \argmin_{\theta^z, \theta^\gamma}\mathcal{L}_n^{z, \gamma}(\theta^z, \theta^\gamma)$ -- which in light of \autoref{eq:regression_error:widetilde_l} also minimizes $\widetilde{\mathcal{L}}_n^{z, \gamma}(\theta^z, \theta^\gamma)$ --, we have $\widetilde{\mathcal{L}}_n^{z, \gamma}(\widehat{\theta}_n^z, \widehat{\theta}_n^\gamma)\leq \widetilde{\mathcal{L}}_n^{z, \gamma}(\theta^z, \theta^\gamma)$ for any $(\theta^z, \theta^\gamma)$. In particular,
		for any sufficiently small $\Delta t_n$ satisfying $\varrho_2^*\Delta t_n\leq 1/2$, we gather
		\begin{align}\label{eq:regression_error:loss:z_and_gamma}
			\Expectation{\abs{\widecheck{Z}_n^\pi - \widehat{Z}_n^\pi}^2} + \Delta t_n\Expectation{\abs{\left(\widecheck{\Gamma}_n^\pi - \widehat{\Gamma}_n^\pi\right)\sigma(t_n, X_n^\pi)}^2} \leq C\begin{aligned}[t]\Bigg(
				&\inf_{\theta^z}\Expectation{\abs{\widecheck{Z}_n^\pi - \psi(X_n^\pi\vert\theta^z)}^2}\\
				&+ \inf_{\theta^\gamma}\Delta t_n\Expectation{\abs{\left(\widecheck{\Gamma}_n^\pi - \chi(X_n^\pi\vert\theta^\gamma)\right)\sigma(t_n, X_n^\pi)}^2}\Bigg).
			\end{aligned}
		\end{align}
		
		Through analogous steps to \cite[Thm. 4.1, step 3-4]{hure_deep_2020} a similar estimate can be established for the loss function \autoref{loss:y}, ultimately giving
		\begin{align}\label{eq:regression_error:loss:y}
			\Expectation{\abs{\widecheck{Y}_n^\pi - \widehat{Y}_n^\pi}^2}\leq C\inf_{\theta^y}\Expectation{\abs{\widecheck{Y}_n^\pi - \varphi(X_n^\pi\vert\theta^y)}^2}\eqqcolon C\epsilon_n^y.
		\end{align}
		
		\item \textit{Approximation error bound in the parametrized case.} Recalling the definitions in \autoref{def:regression_errors}, combining \autoref{eq:regression_error:ground_estimates:y_z} with the estimates \autoref{eq:regression_error:loss:z_and_gamma} and \autoref{eq:regression_error:loss:y} on top of the discrete Grönwall lemma, implies the total approximation error of $Y$ and $Z$ in \autoref{eq:osm:total_error:parametrized} -- given small enough time steps admitting to $\beta \Delta t_n<1$. The $\Gamma$ estimate then follows in a similar manner to Step 5 in \autoref{thm:osm:discretization} using the estimates \autoref{eq:regression_error:ground_estimates:gamma} and \autoref{eq:regression_error:ground_estimates:y_z}, observing that $(1+C\Delta t_n)/(1-\beta\Delta t_n) - 1$ is $\mathcal{O}(\abs{\pi})$ given $\beta\Delta t_n<1$. This completes the total approximation error of \autoref{eq:osm:total_error:parametrized}.

		\item \textit{Derivative representation error of $Z$ and $\Gamma$.} In order to prove \autoref{eq:osm:total_error:ad}, we need to establish an error estimate bounding the difference between the spatial derivative of \autoref{scheme:osm:implementable:z} and the target of \autoref{scheme:osm:implementable:dz}. Notice that under the conditions of \autoref{ass:error_analysis} and \autoref{eq:nn:derivative_bounds}, the arguments of the conditional expectations are all $C_b^2$ in $x$. Then, formal differentiation of \autoref{scheme:osm:implementable:z} with the Leibniz rule and the integration-by-parts formula in \autoref{eq:app:integration-by-parts:multi-d:vector} applied on  \autoref{scheme:osm:implementable:dz}, gives
		\begin{align}
			\left(\nabla_x \widecheck{z}_n^\pi(X_n^\pi) - \widecheck{\gamma}_n^\pi(X_n^\pi)\right)\sigma = \begin{aligned}[t]
				&\Delta t_n [\left(\widehat{\gamma}_n^\pi(X_n^\pi) - \widecheck{\gamma}_n^\pi(X_n^\pi)\right)\sigma]^T \CondExpn{n}{\nabla_x\nabla_z f(t_{n+1}, \Xhat{n+1}^\pi)}\sigma\\
				&+\Delta t_n \CondExpn{n}{\nabla_x \nabla_z f(t_{n+1}, \Xhat{n+1}^\pi)}\widehat{\gamma}_n^\pi(X_n^\pi)\sigma\\
				&-\Delta t_n [\widehat{\gamma}_n^\pi(X_n^\pi)\sigma]^T\CondExpn{n}{\nabla_x\nabla_z f(t_{n+1}, \Xhat{n+1}^\pi)}\sigma\\
				&+ \Delta t_n \CondExpn{n}{\nabla_z f(t_{n+1}, \Xhat{n+1}^\pi)}\nabla_x \widehat{\gamma}_n^\pi(X_n^\pi)\sigma^2.
			\end{aligned}
		\end{align}
		By the bounded differentiability conditions in \ref{ass:error_analysis:bsde:f}, we have that
		\begin{align}
			\Expectation{\abs{\left(\nabla_x \widecheck{z}_n^\pi(X_n^\pi) - \widecheck{\gamma}_n^\pi(X_n^\pi)\right)\sigma}^2}\leq \begin{aligned}[t]
				&4 \Delta t_n^2 L_{\nabla^{2}f} \abs{\sigma}^2 \Expectation{\abs{\left(\widehat{\gamma}_n^\pi(X_n^\pi) - \widecheck{\gamma}_n^\pi(X_n^\pi)\right)\sigma}^2} +  4\Delta t_n^2 L_{\nabla^{2}f} \abs{\sigma}^4\Expectation{\abs{\widehat{\gamma}_n^\pi(X_n^\pi)}^2}\\
				&+ 4\Delta t_n^2 L^2_{\nabla f} \abs{\sigma}^4	\Expectation{\abs{\nabla_x \widehat{\gamma}_n^\pi(X_n^\pi)}^2}.
			\end{aligned}
		\end{align}
		Splitting the first term according to $\widehat{\gamma}_n^\pi(X_n^\pi) - \widecheck{\gamma}_n^\pi(X_n^\pi) = \widehat{\gamma}_n^\pi(X_n^\pi) - \nabla_x \widecheck{z}_n^\pi(X_n^\pi) + \nabla_x \widecheck{z}_n^\pi(X_n^\pi) - \nabla_x \widecheck{\gamma}_n^\pi(X_n^\pi)$, using the direct estimate $\widehat{\gamma}_n^\pi(X_n^\pi)\equiv \nabla_x \widehat{z}_n^\pi(X_n^\pi)$ implied by \autoref{loss:z:ad}, and recalling the bounds in \autoref{eq:nn:derivative_bounds}, subsequently yields
		\begin{align}
			\Expectation{\abs{\left(\nabla_x \widecheck{z}_n^\pi(X_n^\pi) - \widecheck{\gamma}_n^\pi(X_n^\pi)\right)\sigma}^2}\leq C\left(\Expectation{\abs{\left(\nabla_x \widecheck{z}_n^\pi(X_n^\pi) - \nabla_x \widehat{z}_n^\pi\right)\sigma}^2} + \Upsilon^4(S_1)+ \Upsilon^6({S_1})\right),
		\end{align}
		for small enough time steps admitting to $8\Delta t_n^2 L_{\nabla^2 f}^2\abs{\sigma}^2<1$.
		Combining this estimate with the upper bound \autoref{eq:regression_error:loss:z_and_gamma}, recalling the definition of $\epsilon_n^{z, \nabla z}$ in \autoref{def:regression_errors}, we gather
		\begin{align}\label{eq:regression_error:loss:z_and_gamma:ad}
			\Expectation{\abs{\widecheck{Z}_n^\pi - \widehat{Z}_n^\pi}^2} + \Delta t_n\Expectation{\abs{\left(\widecheck{\Gamma}_n^\pi - \nabla_x \widehat{Z}_n^\pi\right)\sigma(t_n, X_n^\pi)}^2} \leq C\begin{aligned}[t]\Bigg(\epsilon_n^{z, \nabla z} + \Delta t_n^3\Upsilon^6(S_1)\Bigg),
			\end{aligned}
		\end{align}
		for small enough time steps $\Delta t_n<1$ and diverging $\Upsilon(S_1)$. The total approximation error estimate in \autoref{eq:osm:total_error:ad} then follows in a similar manner, combining \autoref{eq:regression_error:loss:z_and_gamma:ad} with \autoref{eq:regression_error:loss:y}, \autoref{eq:regression_error:ground_estimates:y_z} and the discrete Grönwall lemma, as in the previous step.
		
		This completes the proof.
		
	\end{steps}
\end{proof}

\autoref{thm:osm:regression} establishes the convergence of the Deep BSDE approach to \autoref{scheme:osm:implementable}, given the UAT property of neural networks provided by \autoref{thm:hornik:uat:sobolev}. The first terms in both \autoref{eq:osm:total_error:parametrized} and \autoref{eq:osm:total_error:ad} correspond to the discrete time approximation errors in \autoref{thm:osm:discretization}. The second terms correspond to the approximations of the neural network regression Monte Carlo approach. Provided by \autoref{thm:hornik:uat:sobolev}, the corresponding regression biases defined by \autoref{def:regression_errors} can be made arbitrarily small with the choice of shallow neural networks already. In exchange to avoid the parametrization in the automatic differentiation approach in \autoref{loss:z:ad}, one needs to restrict the parametrization to the case of $\Sigma_{C_{b}^{2}}$ neural networks and subsequently has to deal with an additional error term in \autoref{eq:osm:total_error:ad}, which depends on the increasing sequence $\Upsilon(S_1)$, controlling the magnitude of the parameters. If this dominating sequence is such that $\Upsilon^6(S_1)/N\to 0$ while $S_1, N\to \infty$ this ensures the existence of neural networks $\varphi(\cdot\vert\theta^{y}), \psi(\cdot\vert\theta^z)\in \Sigma_{C_{b}^{2}}$ such that the total approximation error converges. We shall, however, notice that the claim above guarantees nothing more, and in fact does not guarantee the convergence of the final approximations including regression errors, which we highlight in the remark below.

\begin{remark}[Limitations of \autoref{thm:osm:regression}]\label{remark:sgd}
	In the proof of \autoref{thm:osm:regression} we neglected the presence of three additional error terms. These are the following.
	\begin{enumerate}
		\item First, the definitions in \autoref{def:regression_errors} only express the regression biases due to the choice of a finite number of parameters. The actual regression errors also incorporate the approximation error of the optimal parameter space $\widehat{\theta}^z_n$ and induce a term $\Expectation{\abs{\varphi(X_n^\pi\vert\theta_n^{y, *}) - \varphi(X_n^\pi\vert\widehat{\theta}^y_n)}^2}$, 
		which stems from the fact that unlike in a linear regression method -- see, e.g., \cite{bender_least-squares_2012} --, one does not have a closed-form expression for the true minimizers $(\theta_n^{z, *}, \theta_n^{\gamma, *})$, $\theta_n^{y, *}$ but can only gather an approximation of them with a stochastic gradient descent optimization. The present understanding of this term is poor, mainly due to the non-convexity of the corresponding target function -- see \cite{jentzen_strong_2021} and the references therein. Currently, there exists no theoretical guarantee which would ensure the convergence of SGD iterations in the FBSDE context.
		
		\item The second term arises due to the fact that in practice one can only calculate an empirical version of the expectations in $\mathcal{L}^y_n, \mathcal{L}_n^{z, \gamma}, \mathcal{L}_n^{z, \nabla z}$. This induces a Monte Carlo simulation error of finitely many samples. However, as we shall see in the upcoming numerical section, thanks to the soft memory limitation of a single SGD step, one can pass so many realizations of the underlying Brownian motion throughout the optimization cycle that the magnitude of the corresponding error term becomes negligible compared to other sources of error.
		
		\item The final observation that needs to be highlighted is the compactness assumption on the domain in \autoref{thm:hornik:uat:sobolev}. This error term can be dealt with in a similar fashion to \cite[Remark 4.2]{hure_deep_2020}, where a localization argument is constructed in such a way that -- under suitable truncation ranges -- convergence is ensured.
	\end{enumerate}
\end{remark}

\section{Numerical experiments}\label{sec:numerical_experiments}

In order to show the accuracy and robustness of the proposed scheme, we present results of numerical experiments on three different types of problems.
We distinguish between the two Deep BSDE approaches for the OSM scheme, based on whether the $\Gamma$ process is parametrized with an $\mathds{R}^{d\times d}$-valued neural network -- see \autoref{loss:z:parametrized} --, or it is obtained as the direct Jacobian of the parametrization of the $Z$ process via automatic differentiation -- as in \autoref{loss:z:ad}. We label these variants by (P) and (D), respectively.
As a reference method, we compare the results of the OSM scheme to the first scheme (DBDP1) of Huré et al. \cite{hure_deep_2020}, which corresponds to the Euler discretization of \autoref{scheme:theta_bsde} when $\vartheta_y=\vartheta_z=1$. In accordance with their findings, we found the parametrized version (DBDP1) more robust than the automatic differentiated one (DBDP2) in high-dimensional settings.

Each BSDE is discretized with $N$ equidistant time intervals, giving $\Delta t_n = T/N$ for all $n=0, \dots, N-1$. For the implicit $\vartheta_y$ parameter of the discretization in \autoref{scheme:osm}, we choose values $\vartheta_y\in\{0, 1/2, 1\}$.
In all upcoming examples we use fully-connected, feedforward neural networks of $L=2$ hidden layers with $S_l=100+d$ neurons in each layer. In line with \autoref{thm:osm:regression}, a hyperbolic tangent activation is deployed, yielding continuously differentiable parametrizations. Layer normalization \cite{ba_layer_2016} is applied in between the hidden layers. 
For the stochastic gradient descent iterations, we use the Adam optimizer with the adaptive learning rate strategy of \cite{chen_deep_2021} -- see $\eta(i)$ in \Cref{alg:osm}. The optimization is done as follows: in each backward recursion we allow $I=2^{15}$ SGD iterations for the $N-1$'th time step. Thereafter, we make use of the transfer learning initialization given by \autoref{eq:transfer_learning}, and reduce the number of iterations to $I=2^{11}$ for all preceding time steps. In each iteration step, the optimization receives a new, independent sample of the underlying forward diffusion with $B=2^{10}$ sample paths, meaning that in total the iteration processes $2^{25}$ and $2^{21}$ many realizations of the Brownian motion at time step $n=N-1$ and $n<N-1$, respectively. In order to speed up normalization, neural network trainings were carried out with single floating point precision.
For the implementation of the BCOS method, we choose $K=2^{9}$ Fourier coefficients, $P=5$ Picard iterations and truncate the infinite integrals to a finite interval of $[a, b]=[x_0 + \kappa_\mu-  L\sqrt{\kappa_\sigma}, x_0 + \kappa_\mu + L\sqrt{\kappa_\sigma}]$ where $\kappa_{\mu}= \mu(0, x_0) T$, $\kappa_{\sigma} = \sigma(0, x_0)T$. As in \cite{ruijter_numerical_2016}, we fix $L=10$.

The OSM method has been implemented in TensorFlow 2. In order to exploit static graph efficiency, all core methods are decorated with \texttt{tf.function} decorators. The library used in this paper will be publicly accessible under \href{https://github.com/balintnegyesi/MalliavinDeepBSDE}{github}. All experiments below were run on an \textsc{Dell} Alienware Aurora R10 machine, equipped with an AMD Ryzen 9 3950X CPU (16 cores, 64Mb cache, 4.7GHz) and an Nvidia GeForce RTX 3090 GPU (24Gb).
In order to assess the inherent stochasticity of both the regression Monte Carlo method and the SGD iterations, we run each experiment $5$ times and report on the mean and standard deviations of the resulting independent approximations.
$\mathds{L}^2$-errors are estimated over an independent sample of size $M=2^{10}$ produced by the same machinery as the one used for the simulations. Hence, the final error estimates are calculated as
\begin{align}
	\widehat{\mathds{E}}\left[\abs{\Delta \widehat{Y}_n^\pi}^2\right] = \frac{1}{M} \sum_{m=1}^M \abs{\Delta \widehat{Y}_{n}^{\pi}(m)}^2,\quad \widehat{\mathds{E}}\left[\abs{\Delta \widehat{Z}_n^\pi}^2\right] = \frac{1}{M} \sum_{m=1}^M \abs{\Delta \widehat{Z}_{n}^{\pi}(m)}^2,\quad \widehat{\mathds{E}}\left[\abs{\Delta \widehat{\Gamma}_n^\pi}^2\right] = \frac{1}{M} \sum_{m=1}^M \abs{\Delta \widehat{\Gamma}_{n}^{\pi}(m)}^2 
\end{align}
where $\Delta Y_n^{\pi}(m)$ corresponds to the $m$'th path of test sample, and similarly for other error measures.

\subsection{Example 1: reaction-diffusion with diminishing control}
The first, \emph{reaction-diffusion} type equation is taken from \cite[Example 2]{gobet_adaptive_2017}. Such equations are common in financial applications. The coefficients of the BSDE \autoref{eq:fbsde} are as follows
\begin{align}\label{example:1}
	\begin{split}
		\mu&=\mathbf{0}_d,\qquad \sigma=I_d,\qquad
		f(t, x, y, z) = \frac{\omega(t, \lambda x)}{\left[1 + \omega(t, \lambda x)\right]^2}\left[\lambda^2 d (y-\gamma) - 1 - \frac{\lambda^2}{2}d\right],\qquad 
		g(x) =\gamma + \frac{\omega(T, \lambda x)}{1 + \omega(T, \lambda x)},
	\end{split}
\end{align}
where $\omega(t, x) = \exp(t+\sum_{i=1}^d x_i)$. These parameters satisfy \autoref{ass:error_analysis}. The driver is independent of $Z$ and $\fD$ does not depend on the $Y$ process. Consequently, the solutions of \autoref{eq:fbsde_fbsde:bsde} and \autoref{eq:fbsde_fbsde:malliavin_bsde} can be separated into two disjoint problems. The analytical solutions are given by
\begin{align}
	X_t=W_t,\quad y(t, x) = \frac{\omega(t, \lambda x)}{1 + \omega(t, \lambda x)},\quad z(t, x) = \lambda \frac{\omega(t, \lambda x)}{(1+\omega(t+\lambda x))^2}\mathbf{1}_d,\quad 
	\gamma(t, x) = \lambda^2\frac{\omega(t, \lambda x)(1-\omega(t, \lambda x))}{(1+\omega(t, \lambda x))^3}\mathbf{1}_{d, d}.
\end{align}
We choose $T=0.5$, $\gamma = 0.6, \lambda = 1$ and fix $x_0=\mathbf{1}_d$. We consider $d\in\{1, 10\}$ with $\vartheta_y\in\{0, 1\}$.

In \autoref{fig:Ex1}, the convergence of the two fully-implementable schemes is assessed. \autoref{fig:Ex1:d1} depicts the convergence for $d=1$. The BCOS estimates, drawn with lines, show the same order of convergence as in \autoref{thm:osm:discretization}, confirming the theoretical findings of the discretization error analysis. The Deep BSDE approximations, depicted with scattered error bars, exhibit higher error figures, showcasing the presence of an additional regression component. Nevertheless, the complete approximation error of the corresponding regression estimates admit to the same order of convergence as in \autoref{thm:osm:regression}. The $\Gamma$ approximations corresponding to the parametrized (P) and automatic differentiated (D) cases, demonstrate the difference between the bounds in \autoref{eq:osm:total_error:parametrized} and \autoref{eq:osm:total_error:ad}. Indeed, we observe an extra error stemming from the bounded differentiability component of the neural networks -- see \autoref{eq:nn:derivative_bounds}. The convergence of the regression approximations flattens out for the finest time partition $N=100$ -- see the regression error of $Y$ in particular -- at a level of $\sim\mathcal{O}(10^{-7})$, indicating the presence of a regression bias induced by the restriction on a finite number of parameters.
In \autoref{fig:Ex1:d10}, the same dynamics are depicted for $d=10$, where we observe the same order of convergence, in accordance with \autoref{thm:osm:regression}. Note that the regression estimates of the $Z$ process converge until, and including, the finest time partition $N=100$ in case of the OSM disretization. On the other hand, with the approach of Huré et al. \cite{hure_deep_2020} the decay stops at $N=50$, indicating the impact of diverging conditional variances, as anticipated in \autoref{remark:conditional_variance}. \autoref{tab:Ex1} contains the means and standard deviations of a collection of error measures with respect to $5$ independent runs of the same regression Monte Carlo method. It can be seen that -- regardless of the value of $\vartheta_y$ -- the OSM scheme yields an order of magnitude improvement in the approximation of the $Z$ process, while showing identical error figures in the $Y$ process. Errors under the automatic differentiated case (D) with \autoref{loss:z:ad} are slightly better than in the parametrized approach (P). The $\Gamma$ approximations show comparable accuracies. The total runtime of the OSM regressions is approximately double of that of \cite{hure_deep_2020}, which is intuitively explained by the fact that \autoref{scheme:osm:implementable} solves two BSDEs at each point in time. Execution times under the automatic differentiated variant are slightly higher than in the parameterized case, confirming the extra computational complexity of Jacobian training in \autoref{loss:z:ad}. The neural network regression Monte Carlo method yields sharp, robust estimates with small standard deviations over independent runs of the algorithm, in particular corresponding the $Z$ process.

\begin{figure}[t]
	\centering
	\begin{subfigure}[t]{\textwidth}
		\centering
		\includegraphics[width=0.75\textwidth]{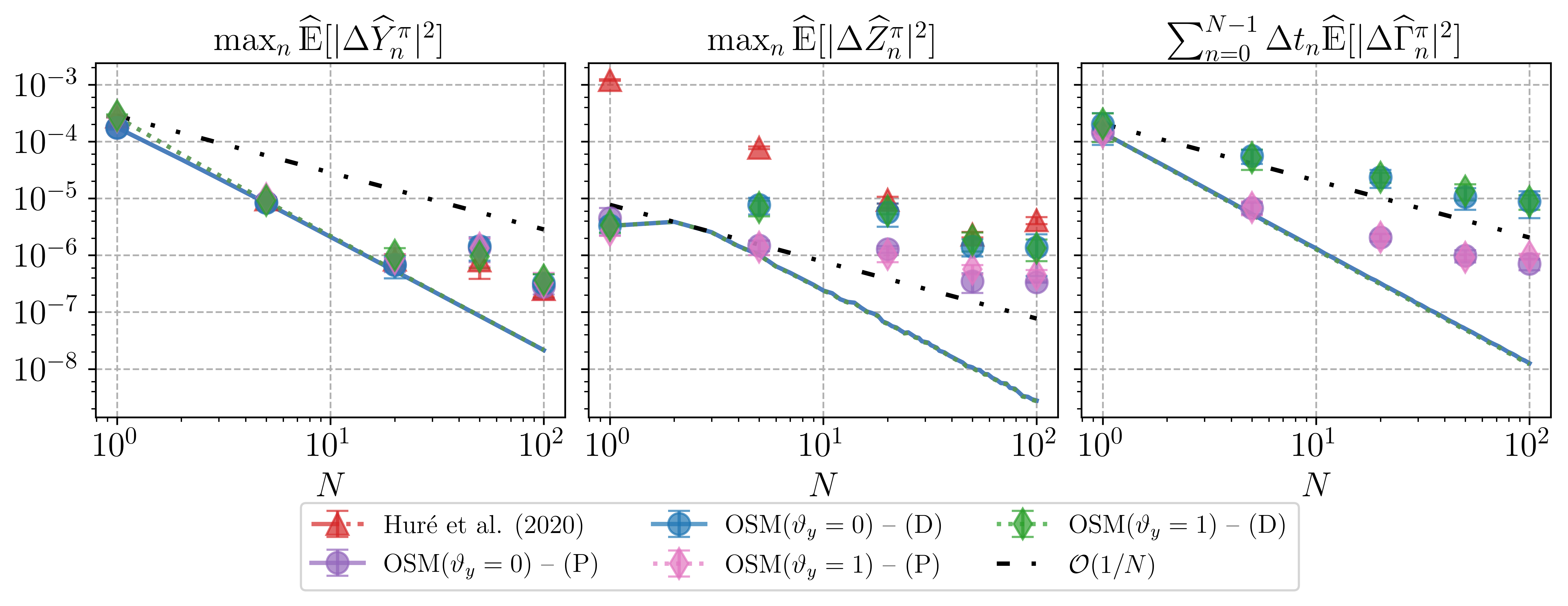}
		\caption{BCOS and Deep BSDE, $d=1$. From left to right: maximum mean-squared approximation errors of $Y$ and $Z$; average mean-squared approximation error of $\Gamma$. Lines correspond to BCOS estimates, scattered error bars to the means and standard deviations of $5$ independent neural network regressions.}
		\label{fig:Ex1:d1}
	\end{subfigure}
	
	\begin{subfigure}[t]{\textwidth}
		\centering
		\includegraphics[width=0.75\textwidth]{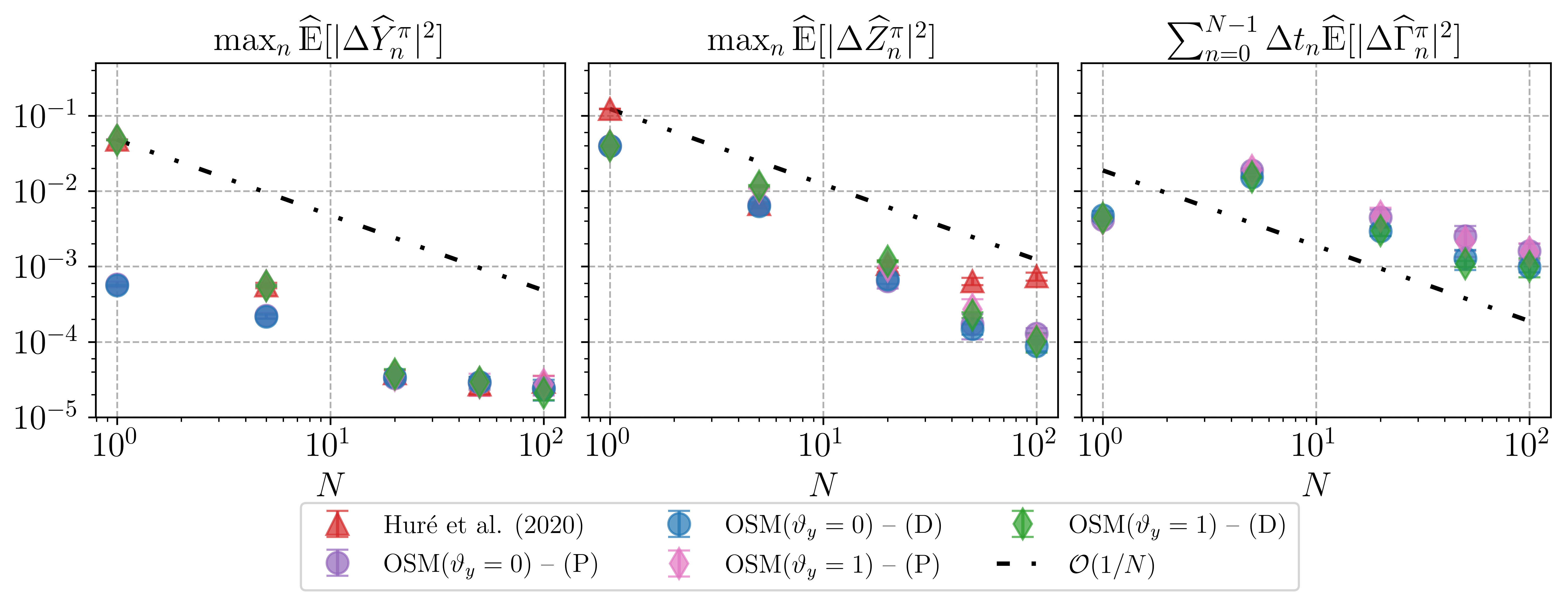}
		\caption{Deep BSDE, $d=10$. From left to right: maximum mean-squared approximation errors of $Y$ and $Z$; average mean-squared approximation error of $\Gamma$. Means and standard deviations are calculated over $5$ independent runs of the algorithm.}
		\label{fig:Ex1:d10}
	\end{subfigure}
	\caption{Example 1 in \autoref{example:1}. Convergence of approximation errors. Mean-squared errors are calculated over an independent sample of $M=2^{10}$ realizations of the underlying Brownian motion.}
	\label{fig:Ex1}
\end{figure}
\begin{table}
	\centering
	\caption{Example 1 in \autoref{example:1}, $d=10$, $N=100$. Summary of Deep BSDE estimates. Mean-squared errors are calculated over an independent sample of $M=2^{10}$ realizations of the underlying Brownian motion. Means and standard deviations (in parentheses) obtained over $5$ independent runs of the algorithm. Best estimates within one standard deviation highlighted in gray. $\Gamma$ estimates from Huré et al. in \cite{hure_deep_2020} are obtained via automatic differentiation.}
	\resizebox{\textwidth}{!}{\begin{tabular}{llllll}
\toprule
{} & \multicolumn{2}{l}{OSM($\vartheta_y=0$)} & \multicolumn{2}{l}{OSM($\vartheta_y=1$)} &                                             Huré et al. (2020) \\
{} &                                                            (P) &                                                            (D) &                                                          (P) & \multicolumn{2}{l}{(D)} \\
\midrule
$|\Delta \widehat{Y}_0^\pi|/|Y_0|$                                                  &    {\cellcolor{gray!20}$\num{3e-04} \left(\num{3e-04}\right)$} &    {\cellcolor{gray!20}$\num{3e-04} \left(\num{2e-04}\right)$} &                        $\num{6e-04}\left(\num{2e-04}\right)$ &    {\cellcolor{gray!20}$\num{2e-04} \left(\num{2e-04}\right)$} &                        $\num{1.1e-03}\left(\num{4e-04}\right)$ \\
$|\Delta \widehat{Z}_0^\pi|/|Z_0|$                                                  &    {\cellcolor{gray!20}$\num{7e-03} \left(\num{3e-03}\right)$} &    {\cellcolor{gray!20}$\num{8e-03} \left(\num{2e-03}\right)$} &  {\cellcolor{gray!20}$\num{9e-03} \left(\num{2e-03}\right)$} &    {\cellcolor{gray!20}$\num{9e-03} \left(\num{5e-03}\right)$} &    {\cellcolor{gray!20}$\num{9e-03} \left(\num{2e-03}\right)$} \\
$|\Delta \widehat{\Gamma}_0^\pi|$                                                   &                        $\num{1.2e-02}\left(\num{3e-03}\right)$ &    {\cellcolor{gray!20}$\num{8e-03} \left(\num{3e-03}\right)$} &  {\cellcolor{gray!20}$\num{9e-03} \left(\num{1e-03}\right)$} &    {\cellcolor{gray!20}$\num{8e-03} \left(\num{2e-03}\right)$} &                        $\num{9.9e+02}\left(\num{8e+01}\right)$ \\
$\max_n\widehat{\mathds{E}}[|\Delta \widehat{Y}_n^\pi|^2]$                          &  {\cellcolor{gray!20}$\num{2.4e-05} \left(\num{5e-06}\right)$} &  {\cellcolor{gray!20}$\num{2.4e-05} \left(\num{7e-06}\right)$} &                      $\num{2.7e-05}\left(\num{8e-06}\right)$ &  {\cellcolor{gray!20}$\num{2.1e-05} \left(\num{4e-06}\right)$} &                        $\num{2.9e-05}\left(\num{6e-06}\right)$ \\
$\max_n\widehat{\mathds{E}}[|\Delta \widehat{Z}_n^\pi|^2]$                          &                        $\num{1.3e-04}\left(\num{2e-05}\right)$ &    {\cellcolor{gray!20}$\num{9e-05} \left(\num{1e-05}\right)$} &                      $\num{1.1e-04}\left(\num{2e-05}\right)$ &  {\cellcolor{gray!20}$\num{1.0e-04} \left(\num{3e-05}\right)$} &                        $\num{7.4e-04}\left(\num{9e-05}\right)$ \\
$\sum_{n=0}^{N-1}\Delta t_n\widehat{\mathds{E}}[|\Delta \widehat{\Gamma}_n^\pi|^2]$ &                          $\num{8e-04}\left(\num{2e-04}\right)$ &  {\cellcolor{gray!20}$\num{5.0e-04} \left(\num{7e-05}\right)$} &                        $\num{8e-04}\left(\num{2e-04}\right)$ &    {\cellcolor{gray!20}$\num{5e-04} \left(\num{1e-04}\right)$} &                        $\num{5.0e+03}\left(\num{8e+02}\right)$ \\
runtime (s)                                                                         &                       $\num{1.20e+03}\left(\num{1e+01}\right)$ &                       $\num{1.44e+03}\left(\num{2e+01}\right)$ &                     $\num{1.19e+03}\left(\num{1e+01}\right)$ &                       $\num{1.43e+03}\left(\num{5e+01}\right)$ &  {\cellcolor{gray!20}$\num{5.7e+02} \left(\num{3e+01}\right)$} \\
\bottomrule
\end{tabular}
}
	\label{tab:Ex1}
\end{table}

\subsection{Example 2: Hamilton-Jacobi-Bellman with LQG control}
The Hamilton-Jacobi-Bellman (HJB) equation is a non-linear PDE derived from Bellman's dynamic programming principle, whose solution is the \emph{value function} of a corresponding \emph{stochastic control} problem. In what follows, we consider the linear-quadratic-Gaussian (LQG) control, which describes a linear system driven by additive noise \cite{han_solving_2018}. The FBSDE system \autoref{eq:fbsde}, associated with the HJB equation has the following coefficients
\begin{align}\label{example:2}
	\begin{split}
		\mu&=\mathbf{0}_d,\qquad \sigma=\sqrt{2}I_d,\qquad
		f(t, x, y, z) = \abs{z}^2,\qquad 
		g(x) = x^TAx + v^Tx + c,
	\end{split}
\end{align}
where $A\in\mathds{R}^{d\times d}, v\in\mathds{R}^{d\times 1}, c\in\mathds{R}$. Unlike in \cite{han_solving_2018}, the hereby considered terminal condition is a quadratic mapping of space. This choice is made so that we have access to \emph{semi-analytical}, pathwise reference solutions $\{(Y_t, Z_t, \Gamma_t)\}_{0\leq t\leq T}$. Indeed, considering the associated parabolic problem \autoref{eq:parabolic_pde}, it is straightforward to show that the solution is given by
\begin{align}
	\begin{split}
		X_t=\sigma W_t,\quad y(t, x) &= x^TP(t)x + Q^T(t)x + R(t),\\
		z(t, x) &= \sigma\left(\left[P(t) + P^T(t)\right]x + Q(t)\right),\quad \gamma(t, x)=\sigma \left[P(t) + P^T(t)\right],
	\end{split}
\end{align}
where the purely time dependent functions $P:[0, T]\to\mathds{R}^{d\times d}, Q:[0, T]\to\mathds{R}^{d\times 1}, R:[0, T]\to\mathds{R}$ satisfy the following set of Riccati type ordinary differential equations (ODE)
\begin{align}\label{eq:hjb:ricatti_odes}
	\begin{split}
		\dot{P}(t) - \left[P(t) + P^T(t)\right]^2&=0,\quad \dot{Q}(t) - 2\left[P(t) + P^T(t)\right]Q(t)=0,\quad \dot{R}(t) + \Tr{P(t) + P^T(t)} - \abs{Q(t)}^2=0,\\
		P(T) &= A,\quad Q(T)=v,\quad R(T)=c,
	\end{split}
\end{align}
with $\dot{P}=\mathrm{d}P/\mathrm{d}t$, $\dot{Q}=\mathrm{d}Q/\mathrm{d}t$ and $\dot{R}=\mathrm{d}R/\mathrm{d}t$. The reference solution is then obtained by integrating \autoref{eq:hjb:ricatti_odes} over a refined time grid of $N_\text{ODE}=10^4$ intervals.\footnote{This is done using \href{https://docs.scipy.org/doc/scipy/reference/generated/scipy.integrate.odeint.html}{scipy.integrate.odeint}.}
We take $A = I_d, v=\mathbf{0}_d, c=0$, $T=0.5$ and fix $x_0=\mathbf{1}_d$.
An interesting feature of the FBSDE system defined by \autoref{example:2} is that the driver is independent of $Y$ meaning that the Malliavin BSDE in \autoref{eq:fbsde_fbsde:malliavin_bsde} can be solved separately from the backward equation. Consequently, the discrete time approximations of $Z$ and $\Gamma$ in \autoref{scheme:osm} do not depend on $\vartheta_y$. Moreover, the driver is quadratically growing in $Z$, in particular, it is only Lipschitz continuous over compact domains. We pick $\vartheta_y=1/2$ and investigate the solution in $d\in\{1, 50\}$.

In \autoref{fig:Ex2:regression} the regression errors of the Deep BSDE approach are assessed in $d=1$. The true regression targets in \autoref{scheme:osm:implementable} are benchmarked according to BCOS. In fact, at time step $n$, the corresponding cosine expansion coefficients are recovered by means of DCT, given neural network approximations $\widehat{Y}_{n+1}^\pi, \widehat{Z}_{n+1}^\pi, \widehat{\Gamma}_{n+1}^\pi$. These coefficients are subsequently plugged in \autoref{bcos:osm} to gather BCOS estimates. For large enough Fourier domains and sufficiently many Picard iterations, the COS error becomes negligible compared to the discretization component and the resulting estimates approximate the true regression labels $\widecheck{Y}_n^\pi, \widecheck{Z}_n^\pi, \widecheck{\Gamma}_n^\pi$. Hence, they can then be used to assess the regression errors induced by the Monte Carlo method. \autoref{fig:Ex2:regression_error:time} depicts these regression errors over time for $N=100$. As it can be seen, the model of Huré et al. \cite{hure_deep_2020} and the OSM scheme result in similar regression error components for the $Y$ process. However, the regression errors of the $Z$ process are three orders of magnitude worse in case of the reference method \cite{hure_deep_2020}, and in fact, dominate the total approximation error at $n=N-1$. In contrast, the OSM estimates -- middle plot of \autoref{fig:Ex2:regression_error:time} -- exhibit the same order of regression error as for the $Y$ process. This demonstrates the advantageous conditional variance behavior of the corresponding OSM estimates, as pointed out in \autoref{remark:conditional_variance}. The regression errors of the $\Gamma$ process show comparable figures.
The cumulative regression errors, corresponding to the second term in \autoref{thm:osm:regression}, are collected in \autoref{fig:Ex2:approximation:convergence}. In case of the model in \cite{hure_deep_2020}, the cumulative regression error of the $Z$ process blows up as the mesh size $\abs{\pi}=T/N$ decreases. On the contrary, the cumulative regression errors in all processes $(Y, Z, \Gamma)$ are at a constant level of $\mathcal{O}(10^{-5})$ for the OSM scheme. In light of \autoref{remark:sgd}, this indicates that the chosen, finite network architecture incorporates a regression bias which cannot be further reduced. In our experiments, we found that it is difficult to decrease this component by changing the number of hidden layers $L$ or neurons per hidden layer $S_l$. Assessing this phenomenon requires a better understanding of both narrow UAT estimates and the convergence of SGD iterations.

In \autoref{fig:Ex2:approximation} the $d=50$ dimensional case is depicted. In order to have dimension independent scales, \emph{relative} mean-squared errors are reported. \autoref{fig:Ex2:approximation:over_time} collects the relative approximation error over the discretized time window when $N=100$. Compared to \cite{hure_deep_2020}, the OSM estimates yield a significant improvement in each part of the solution triple. In particular, the approximation errors of the $Z$ process are three orders of magnitude better with both the parametrized (P) and automatic differentiated (D) approaches. In case of the $\Gamma$ process, two observations can be made. First, the corresponding curve demonstrates that naive automatic differentiation of the $Z$ approximations in \cite{hure_deep_2020} does not provide reliable $\Gamma$s. Moreover, it can be seen that the parametrized version (P) of the Deep BSDE approach given by \autoref{loss:z:parametrized} provides an order of magnitude better average $\Gamma$ errors. The convergence of the total approximation errors is depicted in \autoref{fig:Ex2:approximation:convergence}. The neural network regression estimates converge for both the parametrized (P) and the automatic differentiated (D) loss functions until $N=50$, when the regression bias becomes apparent. Additionally, the convergence of the $\Gamma$ approximations is significantly better in the parametrized case, suggesting that for such a quadratically scaling driver the last term of \autoref{eq:osm:total_error:ad} is a driving error component.

In \autoref{tab:Ex2} means and standard deviations of a collection of error measures are gathered, with respect to $5$ independent runs of the same regression Monte Carlo method, for both $d=1$ and $d=50$. The numbers are in line with the observations above. In particular, we highlight that the error terms corresponding to the $Z$ and $\Gamma$ approximations are four orders of magnitude better than in case of the reference method \cite{hure_deep_2020}. The parametrized version (P) of the Deep BSDE shows consistently better convergence. The total runtime of the neural network regression Monte Carlo approach is moderately increased between $d=1$ and $d=50$. In fact, the average execution time of a single SGD step for the parametrized (P) case in \autoref{loss:z:parametrized} increases from $\num{2.8e-3}(\num{4e-4})$ to $\num{3.3e-3}(\num{4e-4})$ seconds in between $d=1$ and $d=50$. The same numbers for the automatic differentiated formulation (D) in \autoref{loss:z:ad} are $\num{3.8}(\num{4e-4})$ and $\num{4.4e-3}(\num{5e-4})$ seconds. These figures demonstrate the aforementioned methods' scalability for high-dimensional FBSDE systems. Finally, we point out that the OSM estimates are robust over independent runs of the algorithm as showcased by the small standard deviations in \autoref{tab:Ex2}.

\begin{figure}[t]
	\centering
	\begin{subfigure}[t]{\textwidth}
		\centering
		\includegraphics[width=0.75\textwidth]{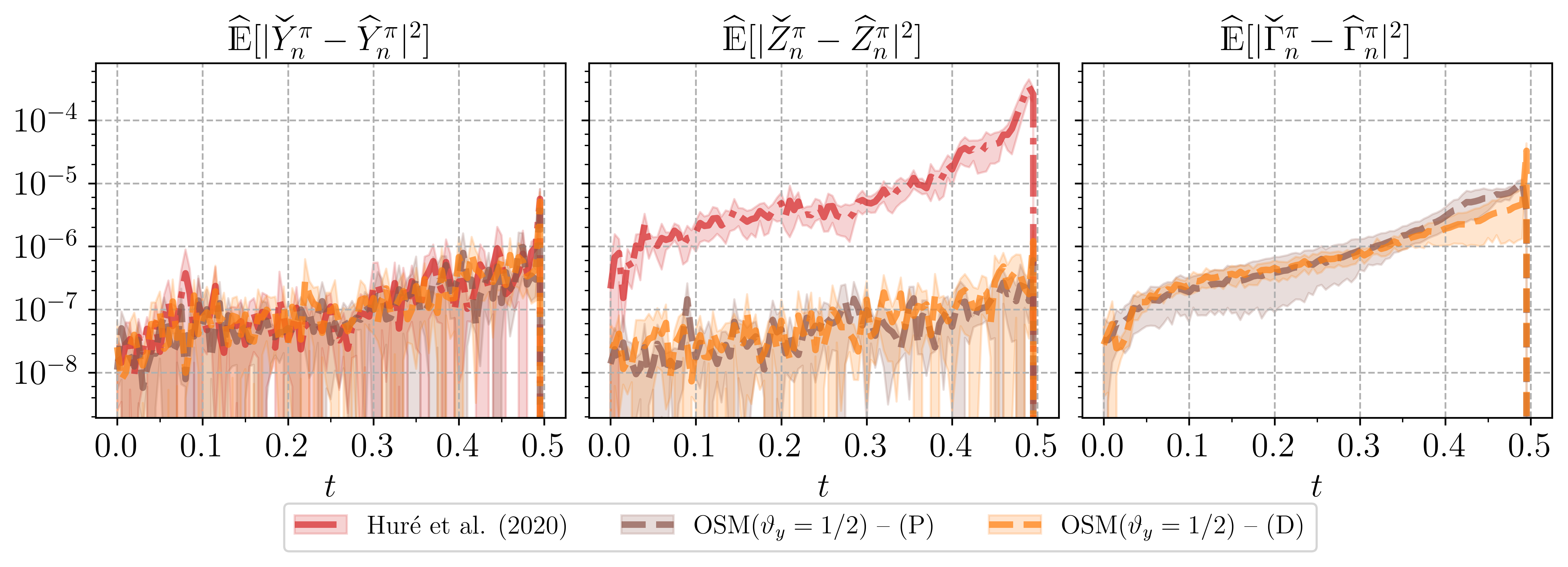}
		\caption{Regression errors over time, $d=1$, $N=100$. From left to right: mean-squared regression errors of the $Y$, $Z$ and $\Gamma$ approximations over the discrete time window.}
		\label{fig:Ex2:regression_error:time}
	\end{subfigure}
	
	\begin{subfigure}[t]{\textwidth}
		\centering
		\includegraphics[width=0.75\textwidth]{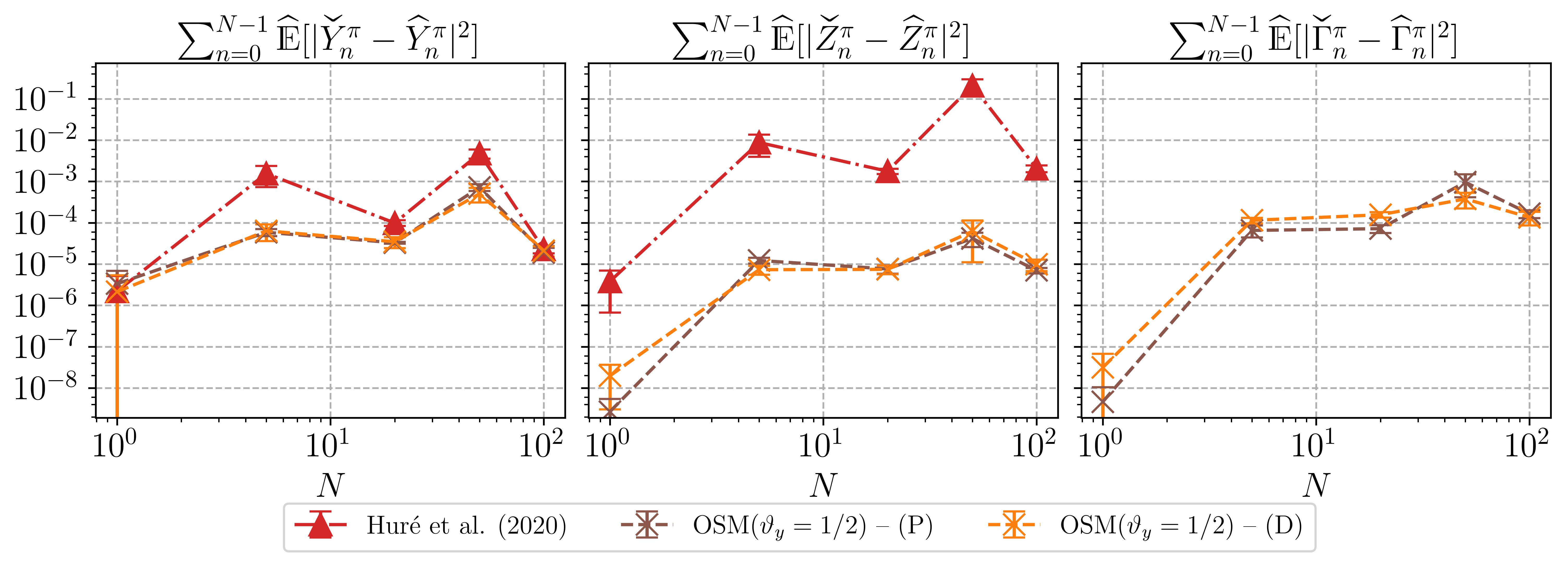}
		\caption{Convergence of cumulative regression errors, $d=1$. From left to right: cumulative regression errors of the $Y$, $Z$ and $\Gamma$ approximations over the number of time steps $N$.}
		\label{fig:Econvergencex2:regression:cumulative}
	\end{subfigure}
	\caption{Example 2 in \autoref{example:2}. Neural network regression errors in $d=1$. The true regression targets of \autoref{scheme:osm:implementable} are identified by BCOS estimates. Mean-squared errors are calculated over an independent sample of $M=2^{10}$ realizations of the underlying Brownian motion. Means and standard deviations are obtained over $5$ independent runs of the algorithm.}
	\label{fig:Ex2:regression}
\end{figure}
\begin{figure}[t]
	\centering
	\begin{subfigure}[t]{\textwidth}
		\centering
		\includegraphics[width=0.75\textwidth]{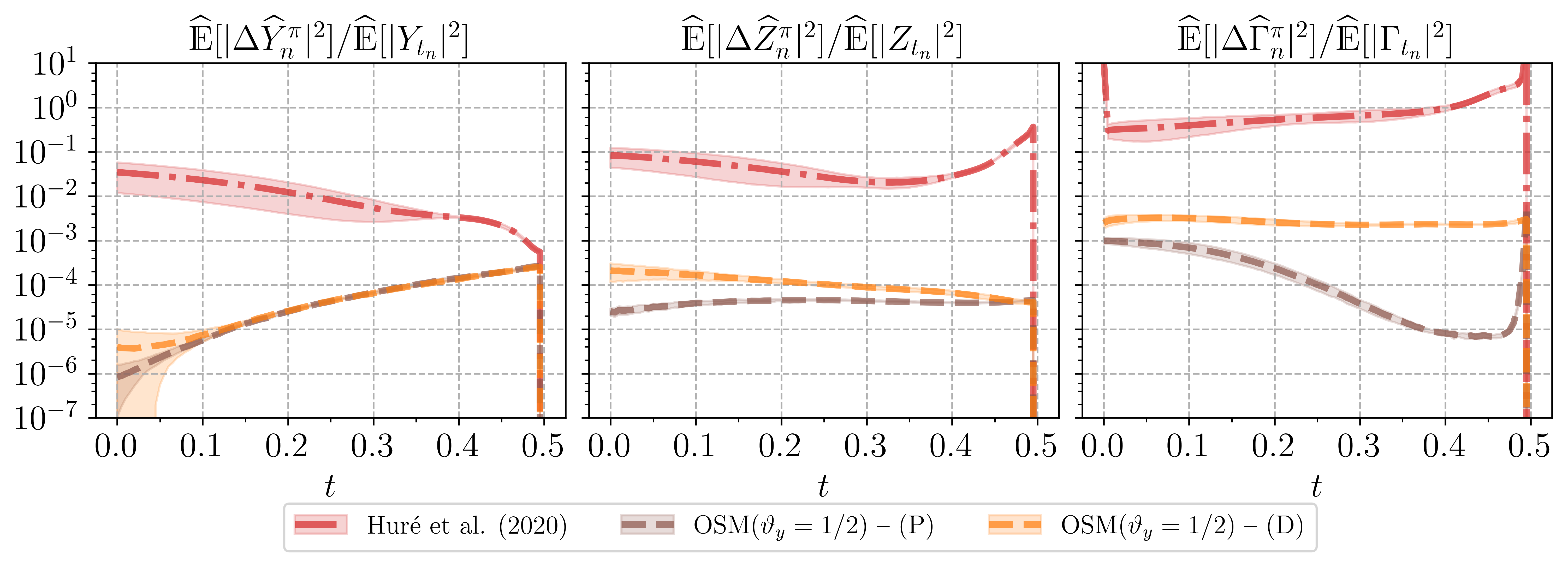}
		\caption{Relative approximation errors over time, $d=50$, $N=100$. From left to right: relative mean-squared approximation errors of $Y, Z$ and $\Gamma$ over the discrete time window.}
		\label{fig:Ex2:approximation:over_time}
	\end{subfigure}
	
	\begin{subfigure}[t]{\textwidth}
		\centering
		\includegraphics[width=0.75\textwidth]{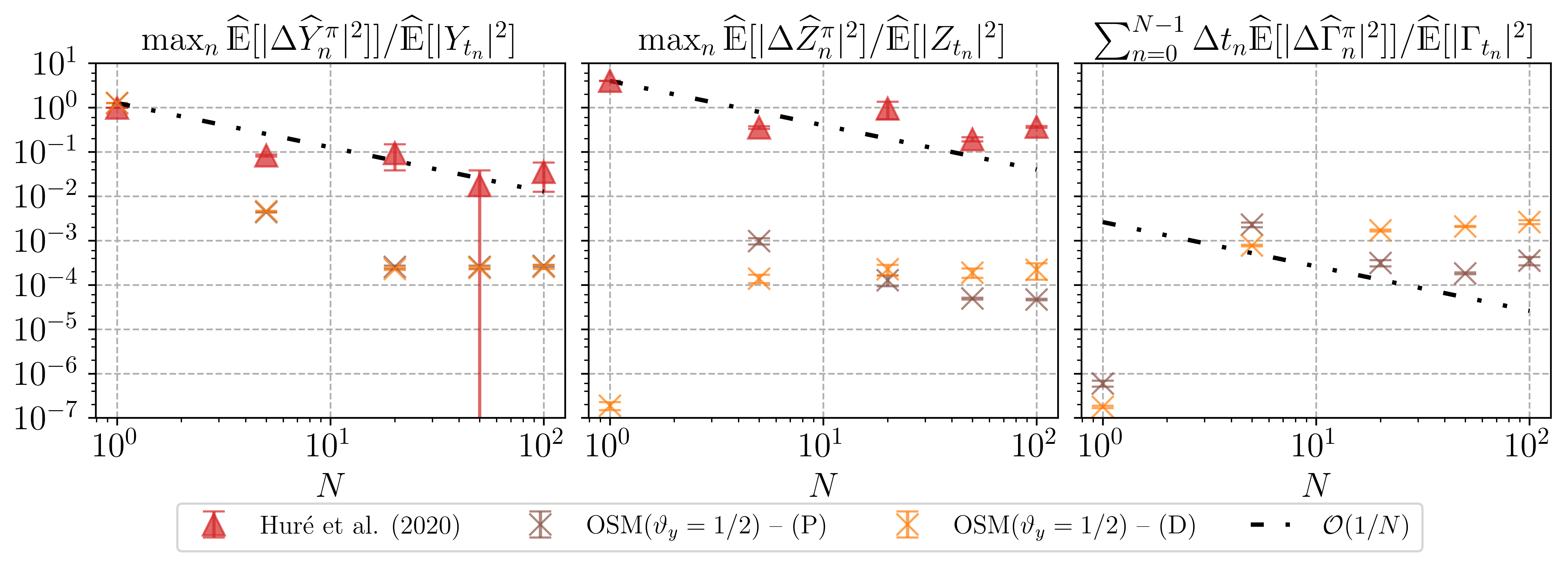}
		\caption{Convergence of relative approximation errors, $d=50$. From left to right: maximum relative mean-squared error of the $Y$, $Z$ approximations; average relative mean-squared error of the $\Gamma$ approximations.}
		\label{fig:Ex2:approximation:convergence}
	\end{subfigure}
	\caption{Example 2 in \autoref{example:2}. $d=50$. Relative approximation errors. Mean-squared errors are calculated over an independent sample of $M=2^{10}$ realizations of the underlying Brownian motion. Means and standard deviations are obtained over $5$ independent runs of the algorithm. $\Gamma$ estimates from Huré et al. in \cite{hure_deep_2020} are obtained via automatic differentiation.}
	\label{fig:Ex2:approximation}
\end{figure}
\begin{table}
	\centering
	\caption{Example 2 in \autoref{example:2}. Summary of Deep BSDE estimates. Mean-squared errors are calculated over an independent sample of $M=2^{10}$ realizations of the underlying Brownian motion. Means and standard deviations (in parentheses) obtained over $5$ independent runs of the algorithm. Best estimates within one standard deviation highlighted in gray. $\Gamma$ estimates from Huré et al. in \cite{hure_deep_2020} are obtained via automatic differentiation.}
	\label{tab:Ex2}
	\begin{subtable}{0.8\textwidth}
		\centering
		\caption{$d=1, N=100$.}
		\label{tab:Ex2:d1}
		\begin{tabular}{llll}
\toprule
{} & \multicolumn{2}{l}{OSM($\vartheta_y=1/2$)} &                                             Huré et al. (2020) \\
{} &                                                            (P) & \multicolumn{2}{l}{(D)} \\
\midrule
$|\Delta \widehat{Y}_0^\pi|/|Y_0|$                                                  &  {\cellcolor{gray!20}$\num{1.1e-03} \left(\num{5e-04}\right)$} &                        $\num{2e-03}\left(\num{1e-03}\right)$ &  {\cellcolor{gray!20}$\num{1.5e-03} \left(\num{3e-04}\right)$} \\
$|\Delta \widehat{Z}_0^\pi|/|Z_0|$                                                  &  {\cellcolor{gray!20}$\num{1.3e-04} \left(\num{9e-05}\right)$} &  {\cellcolor{gray!20}$\num{8e-05} \left(\num{9e-05}\right)$} &                          $\num{1e-03}\left(\num{1e-03}\right)$ \\
$|\Delta \widehat{\Gamma}_0^\pi|/|\Gamma_0|$                                        &  {\cellcolor{gray!20}$\num{1.0e-04} \left(\num{5e-05}\right)$} &                        $\num{2e-04}\left(\num{1e-04}\right)$ &                       $\num{1.05e+00}\left(\num{7e-02}\right)$ \\
$\max_n\widehat{\mathds{E}}[|\Delta \widehat{Y}_n^\pi|^2]$                          &    {\cellcolor{gray!20}$\num{8e-06} \left(\num{2e-06}\right)$} &  {\cellcolor{gray!20}$\num{8e-06} \left(\num{3e-06}\right)$} &                        $\num{1.1e-04}\left(\num{1e-05}\right)$ \\
$\max_n\widehat{\mathds{E}}[|\Delta \widehat{Z}_n^\pi|^2]$                          &    {\cellcolor{gray!20}$\num{8e-07} \left(\num{3e-07}\right)$} &                      $\num{1.4e-06}\left(\num{6e-07}\right)$ &                        $\num{6.4e-03}\left(\num{3e-04}\right)$ \\
$\sum_{n=0}^{N-1}\Delta t_n\widehat{\mathds{E}}[|\Delta \widehat{\Gamma}_n^\pi|^2]$ &    {\cellcolor{gray!20}$\num{8e-07} \left(\num{4e-07}\right)$} &                      $\num{2.8e-06}\left(\num{9e-07}\right)$ &                        $\num{5.5e-03}\left(\num{7e-04}\right)$ \\
runtime (s)                                                                         &                       $\num{1.18e+03}\left(\num{4e+01}\right)$ &                     $\num{1.41e+03}\left(\num{3e+01}\right)$ &  {\cellcolor{gray!20}$\num{5.7e+02} \left(\num{4e+01}\right)$} \\
\bottomrule
\end{tabular}

	\end{subtable}
	
	\begin{subtable}{0.8\textwidth}
		\centering
		\caption{$d=50, N=100$.}
		\label{tab:Ex2:d50}
		\begin{tabular}{llll}
\toprule
{} & \multicolumn{2}{l}{OSM($\vartheta_y=1/2$)} &                                              Huré et al. (2020) \\
{} &                                                            (P) & \multicolumn{2}{l}{(D)} \\
\midrule
$|\Delta \widehat{Y}_0^\pi|/|Y_0|$                                                  &    {\cellcolor{gray!20}$\num{8e-04} \left(\num{5e-04}\right)$} &    {\cellcolor{gray!20}$\num{1e-03} \left(\num{1e-03}\right)$} &                         $\num{1.7e-01}\left(\num{8e-02}\right)$ \\
$|\Delta \widehat{Z}_0^\pi|/|Z_0|$                                                  &  {\cellcolor{gray!20}$\num{5.0e-03} \left(\num{5e-04}\right)$} &                        $\num{1.4e-02}\left(\num{3e-03}\right)$ &                         $\num{2.8e-01}\left(\num{7e-02}\right)$ \\
$|\Delta \widehat{\Gamma}_0^\pi|/|\Gamma_0|$                                        &  {\cellcolor{gray!20}$\num{3.1e-02} \left(\num{2e-03}\right)$} &                        $\num{4.9e-02}\left(\num{7e-03}\right)$ &                         $\num{3.5e+00}\left(\num{1e-01}\right)$ \\
$\max_n\widehat{\mathds{E}}[|\Delta \widehat{Y}_n^\pi|^2]$                          &  {\cellcolor{gray!20}$\num{2.7e+00} \left(\num{1e-01}\right)$} &                        $\num{2.5e+00}\left(\num{3e-01}\right)$ &                           $\num{7e+01}\left(\num{4e+01}\right)$ \\
$\max_n\widehat{\mathds{E}}[|\Delta \widehat{Z}_n^\pi|^2]$                          &                        $\num{3.4e-02}\left(\num{1e-03}\right)$ &  {\cellcolor{gray!20}$\num{3.1e-02} \left(\num{3e-03}\right)$} &                         $\num{2.8e+02}\left(\num{1e+01}\right)$ \\
$\sum_{n=0}^{N-1}\Delta t_n\widehat{\mathds{E}}[|\Delta \widehat{\Gamma}_n^\pi|^2]$ &  {\cellcolor{gray!20}$\num{4.1e-04} \left(\num{6e-05}\right)$} &                        $\num{3.3e-03}\left(\num{2e-04}\right)$ &                         $\num{2.9e+00}\left(\num{2e-01}\right)$ \\
runtime (s)                                                                         &                       $\num{1.36e+03}\left(\num{1e+01}\right)$ &                       $\num{1.62e+03}\left(\num{4e+01}\right)$ &  {\cellcolor{gray!20}$\num{6.16e+02} \left(\num{1e+01}\right)$} \\
\bottomrule
\end{tabular}

	\end{subtable}
\end{table}

\subsection{Example 3: space-dependent diffusion coefficients}
Our final example is taken from \cite{milstein_numerical_2006, ruijter_numerical_2016} and it is meant to demonstrate that the conditions in \autoref{ass:error_analysis} can be substantially relaxed. The FBSDE system \autoref{eq:fbsde} is defined by the following coefficients
\begin{align}\label{example:3}
	\begin{split}
		\mu_i(t, x) &= \frac{(1+x_i^2)}{\left(2+x_i^2\right)^3},\quad \sigma_{ij}(t, x) = \frac{1+x_ix_j}{2+x_ix_j}\delta_{ij},\\
		f(t, x, y, z) &= \begin{aligned}[t]
			&\frac{1}{\lambda (t+\tau)}\exp(-\frac{x^Tx}{\lambda (t+\tau)})\left[4\sum_{i=1}^d\frac{x_i^2(1+x_i^2)}{(2+x_i^2)^3} + \sum_{i=1}^d\frac{(1+x_i^2)^2}{(2+x_i^2)^2}\left(1 - 2\frac{x_i^2}{\lambda (t+\tau)}\right)-\sum_{i=1}^d\frac{x_i^2}{t+\tau}\right]\\
			&+\sqrt{\frac{1+y^2+\exp(-\frac{2x^Tx}{\lambda (t+\tau)})}{1+2y^2}}\sum_{i=1}^d \frac{z_ix_i}{(2+x_i^2)^2},\qquad  g(x) = \exp(-\frac{x^Tx}{\lambda (T+\tau)}).
		\end{aligned}
	\end{split}
\end{align}
The analytical solutions are given by
\begin{align}
	\begin{split}
		y(t, x) &= \exp(-\frac{x^Tx}{\lambda(t+\tau)}),\quad z_j(t, x)= -\frac{1+x_j^2}{2+x_j^2}\frac{2\exp(-\frac{x^Tx}{\lambda (t+\tau)})}{\lambda (t+\tau)}x_j,\quad \gamma_{ij}(t, x) = \partial_j z_i(t, x).
	\end{split}
\end{align}
We use $T=10, \lambda=10, \tau=1$, $d=1$ and fix $x_0=\mathbf{1}_d$. Notice that $\mu$ and $\sigma$ are both $C^2_b$. In conjecture with \autoref{appendix:discretization:dx}, this implies that the Euler-Maruyama schemes in \autoref{scheme:sde:euler} and \autoref{scheme:sde:dx:euler} have an $\mathds{L}^2$ convergence rate of order $1/2$. Additionally, by It\^{o}'s formula, the unique solution of the SDE is given by the closed form expression \cite{milstein_numerical_2006}
\begin{align}\label{eq:example3:x}
	X_t = \Lambda(x_0 + \arctan(x_0) + W_t),
\end{align}
where $\Lambda: \mathds{R}\to\mathds{R}$ is defined implicitly $\Lambda(r) + \arctan(r) \coloneqq r$ for any $r\in\mathds{R}$, and applied element-wise. It is straightforward to check that $\Lambda \in C_b^1(\mathds{R}; \mathds{R})$, in particular $\Lambda^{'}(r) = \frac{1+\Lambda^2(r)}{2+\Lambda^2(r)}$ implying that $\Lambda$ is a bijective. In light of the Malliavin chain rule formula in \autoref{lemma:malliavin_chain_rule}, we then also have
\begin{align}\label{eq:example3:dx}
	D_sX_t = \frac{1+\Lambda^2(x + \arctan(x) + W_t)}{2+\Lambda^2(x + \arctan(x) + W_t)}\mathds{1}_{s\leq t}.
\end{align}
We assess the convergence of the Euler-Maruyama estimates in \autoref{scheme:sde:euler}--\autoref{scheme:sde:dx:euler} by solving the non-linear equation in \autoref{eq:example3:x} for each realization of the Brownian motion.\footnote{This is done by \href{https://docs.scipy.org/doc/scipy/reference/generated/scipy.optimize.root.html}{\texttt{scipy.optimize.root}}'s \texttt{df-sane} algorithm which deploys the method in \cite{la_cruz_spectral_2006}.}
The results of the numerical simulations in $d=1$ are given in \autoref{fig:Ex3:approximation} for the parametrized Deep BSDE case and $\vartheta_y=0, 1/2, 1$. 
We see that, in line with \autoref{appendix:discretization:dx}, $D_nX_{n+1}^\pi$ inherits the convergence rate of $X_n^\pi$. The convergence rates of $(\widehat{Y}_n^\pi, \widehat{Z}_n^\pi, \widehat{\Gamma}_n^\pi)$ are of the same order as in \autoref{thm:osm:regression}. The BCOS estimates and the Deep BSDE approach exhibit coinciding error figures until a magnitude of $\mathcal{O}(10^{-6})$ is reached, when the regression bias becomes apparent.
Similar convergence behavior is observed in high-dimensions. The results suggest that the convergence of the OSM scheme can be extended to the non-additive noise case.

\begin{figure}[t]
	\centering
	\includegraphics[width=0.75\textwidth]{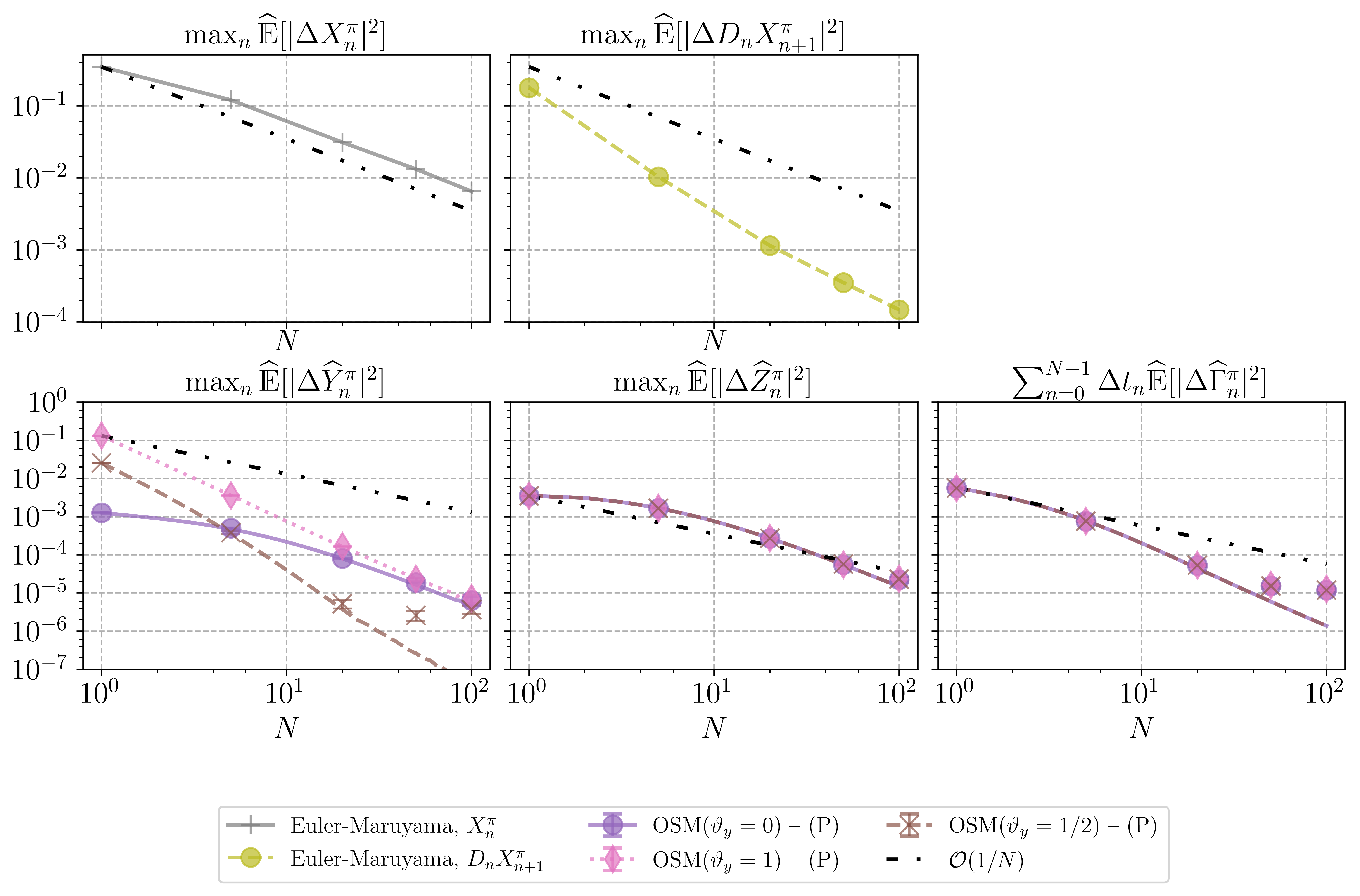}
	\caption{Example 3 in \autoref{example:3}. Convergence of approximation errors for $d=1$. From left to right, top to bottom: maximum mean-squared errors of Euler-Maruyama approximations of $X$ and $DX$; maximum mean-squared approximation errors of $Y$ and $Z$; average mean-squared approximation error of $\Gamma$. Lines correspond to BCOS estimates, scattered error bars to the means and standard deviations of $5$ independent neural network regressions. The mean errors are obtained over an independent sample of $M=2^{10}$ trajectories of the underlying Brownian motion.}
	\label{fig:Ex3:approximation}
\end{figure}

\section{Conclusion}\label{sec:conclusion}

In this paper we introduced the One Step Malliavin (OSM) scheme, a new discretization for Malliavin differentiable FBSDE systems where the control process is estimated by solving the linear BSDE driving the Malliavin derivatives of the solution pair. The main contributions can be summarized as follows. 
The discretization in \autoref{scheme:osm} includes $\Gamma$ estimates, linked to the Hessian matrix of the associated parabolic problem. In \autoref{thm:osm:discretization} we have shown that under standard Lipschitz assumptions and additive noise in the forward diffusion, the aforementioned discrete time approximations admit to an $\mathds{L}^2$ convergence of order $1/2$. We gave two fully-implementable schemes. In case of one-dimensional problems, we extended the BCOS method \cite{ruijter_fourier_2015}, and gathered approximations via Fourier cosine expansions in \autoref{bcos:osm}. For high-dimensional equations, similarly to recent Deep BSDE methods \cite{han_solving_2018, hure_deep_2020}, we formulated a neural network regression Monte Carlo approach, where the corresponding processes of the solution triple are parametrized by fully-connected, feedforward neural networks. We carried out a complete regression error analysis in \autoref{thm:osm:regression} and showed that the neural network parametrizations are consistent with the discretization, in terms of regression biases controlled by the universal approximation property. We supported our theoretical findings by numerical experiments and demonstrated the accuracy and robustness of the proposed approaches for a range of high-dimensional problems. Using BCOS estimates as benchmarks for one-dimensional equations, we empirically assessed the regression errors induced by stochastic gradient descent. Our findings with the Deep BSDE approach showcase accurate approximations for each process in \autoref{scheme:osm:implementable}, and in particular exhibit significantly improved approximations of the $Z$ process for heavily control dependent equations.

\paragraph{Acknowledgments} The first author would like to thank Adam Andersson for the fruitful discussions in the early stages of this work. The first author also acknowledges financial support from the Peter Paul Peterich Foundation via the TU Delft University Fund.
\begin{appendix}
	\section{Convergence of \texorpdfstring{$D_nX_{n+1}^\pi$}{DX}}\label{appendix:discretization:dx}
	We show the convergence of $D_nX_{n+1}^\pi$ estimates of the Euler-Maruyama discretization \autoref{scheme:sde:dx:euler} under the assumptions
	
	\begin{enumerate}[label=($\widetilde{\mathbf{A}}^{\sigma, \mu}_{\arabic*}$), wide, labelwidth=0pt, labelindent=0pt]
		\item $\sigma$ is uniformly bounded;\label{ass:sde:dx:1}
		\item $\mu\in C_b^{0, 1}(\mathds{R}^{d\times 1}; \mathds{R})$, $\sigma\in C_b^{0, 1}(\mathds{R}^{d\times 1};\mathds{R}^{d\times d})$. In particular both of them are Lipschitz continuous in $x.$\label{ass:sde:dx:2}
	\end{enumerate}
	From the estimation \autoref{scheme:sde:dx:euler} and the linear SDE of the Malliavin derivative in \autoref{eq:fbsde_fbsde:malliavin_sde} -- using the inequality $(a+b+c)^2\leq 4(a^2+b^2+c^2)$, on top of the $L^2([0, T];\mathds{R}^{d\times d})$ Cauchy-Schwarz inequality and It\^{o}'s isometry --, it follows
	\begin{align}
		\Expectation{\abs{D_{t_{n}}X_{t_{n+1}} - D_nX_{n+1}^\pi}^2}\leq \begin{aligned}[t]
			&4\Expectation{\abs{\sigma(t_n, X_{t_{n}}) - \sigma(t_n, X_n^\pi)}^2}\\
			&+ 4\Delta t_n \Expectation{\int_{t_{n}}^{t_{n+1}} \abs{\nabla_x \mu(r, X_r)D_{t_{n}}X_r - \nabla_x \mu(t_n, X_n^\pi)\sigma(t_n, X_n^\pi)}^2\mathrm{d}r}\\
			&+ 4\Expectation{\int_{t_{n}}^{t_{n+1}} \abs{\nabla_x \sigma(r, X_r)D_{t_{n}}X_r - \nabla_x\sigma(t_n, X_n^\pi)\sigma(t_n, X_n^\pi)}^2\mathrm{d}r}.
		\end{aligned}
	\end{align}
	Bounded continuous differentiability in \ref{ass:sde:dx:2}, in particular, implies Lipschitz continuity. Furthermore, by the uniform boundedness of the diffusion coefficient and the mean-squared continuity of $D_{t_{n}}X$ in \autoref{mean-squared continuity:dx}, we gather
	\begin{align}
		\Expectation{\abs{D_{t_{n}}X_{t_{n+1}} - D_nX_{n+1}^\pi}^2}\leq 4L_{\sigma}^2\Expectation{\abs{X_{t_{n}} - X_n^\pi}^2} + C\Delta t_n,
	\end{align}
	for any $\Delta t_n<1$. Then, due to the discretization error of the Euler-Maruyama estimates given by \autoref{scheme:sde:euler}, we conclude $\limsup_{\abs{\pi}\to 0} \frac{1}{\abs{\pi}} \Expectation{\abs{D_{t_{n}}X_{t_{n+1}} - D_nX_{n+1}^\pi}^2}<\infty$.
	
	\section{Integration by parts formulas}
	For the formula in \autoref{eq:bcos:general_estimates:1th} we refer to \cite[A.1]{ruijter_fourier_2015}. In order to prove \autoref{eq:bcos:general_estimates:2th}, let $v:[0, T]\times \mathds{R}\to\mathds{R}$ and consider
	\begin{align}
		\CondExpnx{n}{v(t_{n+1}, X_{n+1}^{\pi}(\Delta W_n))\Delta W_n^2} = \CondExpnx{n}{\frac{1}{\sqrt{2\pi \Delta t_n}}\int_\mathds{R} v(t_{n+1}, X_{n+1}^\pi(\nu) \nu^2e^{-\frac{1}{2\Delta t_n}\nu^2}\mathrm{d}\nu},
	\end{align}
	with the Euler-Maruyama approximations $X_{n+1}^\pi(\Delta W_n)=x+\mu(t_n, x)\Delta t_n + \sigma(t_n, x)\Delta W_n$.
	For a sufficiently smooth $v$, integration by parts implies
	\begin{align}
		\CondExpnx{n}{\frac{1}{\sqrt{2\pi \Delta t_n}}\int_\mathds{R} v(t_{n+1}, X_{n+1}^\pi) \nu^2e^{-\frac{1}{2\Delta t_n}\nu^2}\mathrm{d}\nu} =\begin{aligned}[t]
			\mathds{E}_{n}^x\Bigg[\frac{1}{\sqrt{2\pi \Delta t_n}}\Big\{&-\Delta t_n\left[\nu v(t_{n+1}, X_{n+1}^\pi) e^{-\nu^2/(2\Delta t_n)}\right]_{-\infty}^{+\infty}\\
			&+\Delta t_n \int_\mathds{R} v(t_{n+1}, X_{n+1}^\pi) e^{-\frac{1}{2\Delta t_n}\nu^2}\mathrm{d}\nu\\
			&+ \Delta t_n\sigma(t_n, x)\int_{\mathds{R}}\partial_x v(t_{n+1}, X_{n+1}^\pi)\nu  e^{-\frac{1}{2\Delta t_n}\nu^2}\mathrm{d}\nu\Big\}\Bigg],
		\end{aligned}
	\end{align}
	For a $v$ with sufficient radial decay, we therefore conclude that 
	\begin{align}
		\CondExpnx{n}{v(t_{n+1}, X_{n+1}^\pi)\Delta W_n^2} = \Delta t_n \CondExpnx{n}{v(t_{n+1}, X_{n+1}^\pi)} + \Delta t_n^2 \sigma^2(t_n, x) \CondExpnx{n}{\partial_{xx}^2 v(t_{n+1}, X_{n+1}^\pi)},
	\end{align}
	by the estimate in \autoref{eq:bcos:general_estimates:1th}.
	
	Thereupon, given a cosine expansion approximation of $v(t_{n+1}, \rho)\approx  \sideset{}{'}\sum_{k=0}^{K-1} \mathcal{V}_k(t_{n+1})\cos(k\pi\frac{\rho - a}{b-a})$, the corresponding spatial derivative approximations are given by
	$\partial_x v(t_{n+1}, \rho) \approx \sideset{}{'}\sum_{k=0}^{K-1} -\mathcal{V}_{k}(t_{n+1}) \frac{k\pi}{b-a}\sin(k\pi\frac{\rho - a}{b-a}),\quad \partial_{xx}^2v(t_{n+1}, \rho) \approx \sideset{}{'}\sum_{k=0}^{K-1} -\mathcal{V}_{k}(t_{n+1}) \left(\frac{k\pi}{b-a}\right)^2\cos(k\pi\frac{\rho - a}{b-a}).$
	Then the approximations in \autoref{eq:bcos:general_estimates:1th}--\autoref{eq:bcos:general_estimates:2th} follow from the expressions $
	\CondExpnx{n}{\sin(k\pi\frac{X_{n+1}^\pi - a}{b-a})} = \Im{\Phi\left(k\vert x\right)},\quad \CondExpnx{n}{\cos(k\pi\frac{X_{n+1}^\pi - a}{b-a})} = \Re{\Phi\left(k\vert x\right)},$
	where $\Phi(k\vert x)$ is defined as in \autoref{sec:bcos}.
	
	\paragraph{Multi-dimensional extensions.} In case the underlying forward process is a $\mathds{R}^{d\times 1}$-dimensional Brownian motion, the following extension can be given. Let $v: [0, T]\times \mathds{R}^{d\times 1}\to \mathds{R}$ be a scalar-valued function. Then reasoning similar to \cite[A.1]{ruijter_fourier_2015} shows that $\CondExpn{n}{\left(\Delta W_n\right)_{i1} v(t_{n+1}, X_{n+1})} = \sum_{k=1}^d \Delta t_n \CondExpn{n}{\partial_k v(t_{n+1}, X_{n+1}^\pi)}\left(\sigma(t_{n+1}, X_{n+1}^\pi)\right)_{ki}$. In matrix notation
	\begin{align}
		\left(\CondExpn{n}{\Delta W_n v(t_{n+1}, X_{n+1}^\pi)}\right)^T= \Delta t_n \CondExpn{n}{\nabla_x v(t_{n+1}, X_{n+1}^\pi)}\sigma(t_{n+1}, X_{n+1}).
	\end{align}
	Alternatively, for a vector-valued mapping $\psi:[0, T]\times \mathds{R}^{d\times 1}\to \mathds{R}^{1\times d}$, similar arguments give the following, component-wise formula $\CondExpn{n}{\left(\Delta W_n\right)_{i1} \left(\psi(t_{n+1}, X_{n+1})\right)_{1j}} = \sum_{k=1}^d \Delta t_n \CondExpn{n}{\partial_k \left(\psi(t_{n+1}, X_{n+1}^\pi)\right)_{1j}}\left(\sigma(t_{n}, X_{n}^\pi)\right)_{ki}$. In matrix notation
	\begin{align}\label{eq:app:integration-by-parts:multi-d:vector}
		\left(\CondExpn{n}{\Delta W_n \psi(t_{n+1}, X_{n+1}^\pi)}\right)^T= \Delta t_n \CondExpn{n}{\nabla_x \psi(t_{n+1}, X_{n+1}^\pi)}\sigma(t_{n}, X_{n}^\pi),
	\end{align}
	where $\nabla_x \psi$ is the Jacobian matrix of $\psi$.
	
	\section{BCOS estimates}\label{appendix:bcos}
	Let us fix $d=1$.
	The BCOS approximations of the OSM scheme in \autoref{bcos:osm} can be derived as follows.
	Using the definition in \autoref{eq:w_def} and the Euler-Maruyama estimates in \autoref{scheme:sde:dx:euler}, the $\Gamma$ estimates in \autoref{scheme:osm:implementable:dz} can be written according to
	\begin{align}
		D_n\widecheck{Z}_n^\pi = \widecheck{\gamma}_n^\pi(x) \sigma(t_n, x) &= \begin{aligned}[t]
			&\frac{1}{\Delta t_n}\sigma(t_n, x)\left(1 + \Delta t_n \partial_x\mu(t_n, x)\right)\CondExpnx{n}{\Delta W_n w_{n+1}^\pi(\Xhat{n+1}^\pi)}\\
			&+\frac{1}{\Delta t_n}\sigma(t_n, x)\partial_x\sigma(t_n, x)\CondExpnx{n}{\Delta W_n^2 w_{n+1}^\pi(\Xhat{n+1}^\pi)}\\
			&+ \CondExpnx{n}{\Delta W_n \partial_z f(t_{n+1},\Xhat{n+1}^\pi)}\widecheck{\gamma}^\pi_n(x)\sigma(t_n, x).
		\end{aligned}
	\end{align}
	A cosine expansion approximation for $w_{n+1}^\pi(\Xhat{n+1}^\pi)$ and $\partial_z f(t_{n+1}, \Xhat{n+1}^\pi)$ can be obtained by means of DCT, yielding approximations $\{\widehat{\mathcal{W}}_k(t_{n+1})\}_{k=0, \dots, K-1}$, $\{\widehat{\mathcal{F}}_k^z(t_{n+1})\}_{k=0, \dots, K-1}$ respectively. Consequently, plugging these approximations combined with the integration by parts formulas in \autoref{eq:bcos:general_estimates:1th}--\autoref{eq:bcos:general_estimates:2th}, in the estimate above yields
	\begin{align}
		\widehat{\gamma}_n^\pi(x)\sigma(t_n, x) = \begin{aligned}[t]
			&-\sigma^2(t_n, x)(1+\partial_x\mu(t_n, x)\Delta t_n)\sideset{}{'}\sum_{k=0}^{K-1} \frac{k\pi}{b-a}\widehat{\mathcal{W}}_{k}(t_{n+1})\Im{\Phi(k\vert x)}\\
			&+\sigma(t_n, x)\partial_x\sigma(t_n, x)\sideset{}{'}\sum_{k=0}^{K-1} \widehat{\mathcal{W}}_k(t_{n+1})\Re{\Phi(k\vert x)}\\
			&-\Delta t_n\sigma^3(t_n, x)\partial_x\sigma(t_n, x)\sideset{}{'}\sum_{k=0}^{K-1} \left(\frac{k\pi}{b-a}\right)^2\widehat{\mathcal{W}}_k(t_{n+1})\Re{\Phi(k\vert x)}\\
			&-\widehat{\gamma}_n^\pi(x)\Delta t_n\sigma^2(t_n, x)\sideset{}{'}\sum_{k=0}^{K-1} \frac{k\pi}{b-a}\widehat{\mathcal{F}}^z_{k}(t_{n+1})\Im{\Phi(k\vert x)}.
		\end{aligned}
	\end{align}
	The approximation $D_n\widehat{Z}_n^\pi=\widehat{\Gamma}_n^\pi\sigma(t_n, X_n^\pi)$ subsequently follows. The coefficients $\{\widehat{\mathcal	{DZ}}_k(t_{n+1})\}_{k=0,\dots, K-1}$ are calculated by DCT and subsequently plugged into the approximations of the $Z$ process, which follows analogously using the formulas in \autoref{eq:bcos:general_estimates:0th}--\autoref{eq:bcos:general_estimates:1th}. The approximation of the $Y$ process in \autoref{scheme:osm:implementable:y} is identical to \cite{ruijter_fourier_2015} and therefore omitted.
	
\end{appendix}

\printbibliography[heading=bibintoc, title={References}]



\end{document}